\newtheorem{theorem}{Theorem}[section]
\newtheorem{lemma}[theorem]{Lemma}
\newtheorem{remark}[theorem]{Remark}
\newtheorem{corollary}[theorem]{Corollary}
\newtheorem{proposition}[theorem]{Proposition}
\newtheorem{lem-def}[theorem]{Lemma-Definition}
\DeclareRobustCommand\longtwoheadrightarrow
\newcommand{\hooklongrightarrow}{\lhook\joinrel\longrightarrow}
\newcommand{\I}{\mathbb I}
\newcommand{\R}{\mathbb R}
\renewcommand{\S}{\mathbb S}
\newcommand{\N}{\mathbb N}
\newcommand{\Z}{\mathbb Z}
\newcommand{\Q}{\mathbb Q}
\newcommand{\T}{\mathbb{T}}
\newcommand{\U}{\mathbb{U}}
\renewcommand{\H}{\mathbb{H}}
\renewcommand{\tt}{\mathcal{T}}
\def\op{\operatorname}
\def\al{\alpha}
\def\as#1{\renewcommand\arraystretch{#1}}
\def\bs{\vskip.5cm}
\def\be{\beta}
\def\com{{\op{com}}}
\def\cv{\op{Cvx}}
\def\d{\Delta}
\def\dta{\delta}
\def\e{\medskip}
\def\ep{\epsilon}
\def\eqr{\op{EqRk}}
\def\eqrat{\op{EqRk}^{\op{rat}}}
\def\eqri{\op{EqRk}^{\op{irrat}}}
\def\g{\Gamma}
\def\ga{\gamma}
\def\gen#1{\big\langle\, {#1} \,\big\rangle}
\def\gga{\gen{\g,\ga}}
\def\ggb{\gen{\g,\be}}
\def\ggd{\gen{\g,\dta}}
\def\gge{\gen{\g,\ep}}
\def\gm{\g_\mu}
\def\gn{\g_\nu}
\def\gq{\g_\Q}
\def\grr{\g_\R}
\def\gsme{\g_{\op{sme}}}
\def\hg{H(\ga)}
\def\hra{\hooklongrightarrow}
\def\hq{\mathbb{H}(\gq)}
\def\hk{\hookrightarrow}
\def\imp{\ \Longrightarrow\ }
\def\incom{\op{incom}}
\def\incrg{\op{IncRk}(\g)}
\def\inf{\op{inf}}
\def\inii{\op{Init}(I)}
\def\init{\op{init}}
\def\iso{\lower.3ex\hbox{\as{.08}$\begin{array}{c}\lra\\\mbox{\tiny $\sim\,$}\end{array}$}}
\def\ism{\lower.3ex\hbox{\as{.08}$\begin{array}{c}\,\to\\\mbox{\tiny $\sim\,$}\end{array}$}}
\def\k{\op{Ker}}
\def\kb{\overline{K}}
\def\km{k_\mu}
\def\kx{K[x]}
\def\La{\Lambda}
\def\lc{\op{lc}}
\def\lg{l\raise.6ex\hbox to.2em{\hss.\hss}l}
\def\lra{\,\longrightarrow\,}
\def\lx{\operatorname{lex}}
\def\mb{\bar{\mu}}
\def\minf{\om_{-\infty}}
\def\om{\omega}
\def\opp{^{\op{opp}}}
\def\ord{\op{ord}}
\def\p{\mathfrak{p}}
\def\pcv{\op{Prin}}
\def\pr{\op{prk}}
\def\rk{\op{rk}}
\def\rii{\R^\I_{\lx}}
\def\rlex{\R^I_{\lx}}
\def\rrk{\op{rr}}
\def\sii{\ \Longleftrightarrow\ }
\def\sme{{\mbox{\tiny $\op{sme}$}}}
\def\spv{\op{Spv}}
\def\ss{\mathcal{L}}
\def\sup{\op{sup}}
\def\supp{\op{supp}}
\def\ttt{\mathcal{T}}
\def\vb{\bar{v}}
\title{Small extensions of abelian ordered groups}
\subjclass[2010]{Primary 06F20, 13A18; Secondary 12J20, 14E15}
\thanks{Partially supported by grant MTM2016-75980-P from the Spanish MEC}
\author[Nart]{Enric Nart}
\address{Departament de Matem\`{a}tiques,         Universitat Aut\`{o}noma de Barcelona,         Edifici C, E-08193 Bellaterra, Barcelona, Catalonia}
\email{nart@mat.uab.cat}
\date{}
\keywords{abelian ordered group, small extension, valuation}
\begin{document}

\begin{abstract}
Let $\g$ be a totally ordered abelian group. We use Hahn's embedding theorem to construct a totally ordered set $\g\hk \gsme$ which classifies small extensions of $\g$.  This small-extensions closure $\gsme$ is complete and plays a crucial role in the description of equivalence classes of valuations on the polynomial ring $\kx$ over a field $K$. 
\end{abstract}

\maketitle

\section*{Introduction}

Let $\g$ be a totally ordered abelian group and let $\gq=\g\otimes_\Z\Q$ be its divisible hull.

Let  $\g\hk\La$  be an order-preserving embedding of ordered groups and let $\g^\com\subset\La$ be the subgroup of all elements in $\La$ which are commensurable over $\g$:
$$
\g^\com=\left\{\ga\in\La\mid n\ga\in\g \ \mbox{for some }n\in\Z_{>0}\right\}\subset\La.
$$

We say that $\g\hk\La$ is a \emph{small extension} if $\La/\g^\com$ is a cyclic group.

In this paper, we use Hahn's embedding theorem to construct certain universal totally ordered sets
$$
\g\subset \gq\subset\grr\subset\gsme
$$
which classify small extensions.

More precisely, for a small extension $\g\hk\La$ as above, let $\g^\com\ism\Delta\subset \gq$ be the canonical embedding into the divisible hull. Then, for any choice of a generator $\ga\in\La$ of the cyclic quotient $\La/\g^\com$, there exists a unique element $\be\in\gsme$ and a unique isomorphism $\La\ism\gen{\Delta,\be}$ which sends $\ga$ to $\be$ and acts as the canonical isomorphism on $\g^\com$. Moreover, the small extension $\g\hk\La$ preserves the rank if and only if $\be$ belongs to $\grr$. 

The content of the paper is as follows.
In section 1, we review some basic facts on abelian orderd groups, including Hahn's embedding theorem.
In section 2, we construct $\gsme$, the small-extensions closure of $\g$. In section 3, we prove some topological properties of this object: it inherits a canonical total order and $\gq$ is dense in $\gsme$. However, the most relevant property of $\gsme$ is its completeness in a strong sense: every non-empty subset admits an infimum and a supreme.

Finally, section 4 is devoted to give some hints about the applications of these constructions to valuation theory.


A valuation on  a commutative ring $A$ is a mapping
$$
\mu\colon A\lra \La\infty
$$
where $\La$ is an ordered group, satisfying the following conditions:\e

(0) \ $\mu(1)=0$, \ $\mu(0)=\infty$,\e

(1) \ $\mu(ab)=\mu(a)+\mu(b),\quad\forall\,a,b\in A$,\e

(2) \ $\mu(a+b)\ge\min\{\mu(a),\mu(b)\},\quad\forall\,a,b\in A$.\e

The \emph{support} of $\mu$ is the prime ideal $\p=\mu^{-1}(\infty)\in\op{Spec}(A)$. 
The valuation $\mu$ induces a valuation on the field $\kappa(\p)=\op{Frac}(A/\p)$. 

The \emph{value group} of $\mu$ is the subgroup $\gm\subset \La$ generated by $\mu\left(A\setminus\p\right)$.



Two valuations $\mu$, $\nu$ on $A$ are \emph{equivalent} if there is an isomorphism of ordered groups $\varphi\colon \gm \ism\gn$ fitting into a commutative diagram
$$
\as{1.4}
\begin{array}{ccc}
\gm\infty&\stackrel{\varphi}\lra\ &\!\!\gn\infty\\
\quad\ \mbox{\scriptsize$\mu$}&\nwarrow\ \nearrow&\!\!\!\mbox{\scriptsize$\nu$}\quad\\
&A&
\end{array}
$$

In this case, we write $\mu\sim\nu$. 
The \emph{valuative spectrum} of $A$ is the set $\spv(A)$ of equivalence classes of valuations on $A$. We denote by $[\mu]\in\spv(A)$ the equivalence class of $\mu$.

Any ring homomorphism $A\to B$ induces a restriction mapping $\spv(B)\to\spv(A)$, 
which behaves well on equivalence classes.

For any field $K$ we may consider the relative affine line $\spv(\kx)\to\spv(K)$.

Given any valuation $v$ on $K$, the fiber $\ttt_v$ of the equivalence class $[v]\in\spv(K)$ is called the \emph{valuative tree} over the valued field $(K,v)$.

$$
\as{1.4}
\begin{array}{ccc}
\ttt_v&\hra &\spv(\kx)\\
\downarrow&&\downarrow\\
\mbox{$[v]$}&\hra &\spv(K)
\end{array}
$$

Let $\g=v(K^*)$ be the value group of $v$. 
An element $[\mu]\in\ttt_v$ is an equivalence class of valuations $\mu$ on $\kx$ whose restrictions to $K$ are equivalent to $v$. In other words, there exists an embedding of ordered groups $\iota\colon\g\hk\gm$, fitting into a commutative diagram 
$$
\as{1.4}
\begin{array}{ccc}
\kx&\stackrel{\mu}\lra&\gm\infty\\
\uparrow&&\ \uparrow\mbox{\tiny$\iota$}\\
K&\stackrel{v}\lra&\g\infty
\end{array}
$$
The extension $\iota\colon\g \hk\gm$ is always a small extension.

In the case $\rk(\g)=1$ and $K$ algebraically closed, the tree $\ttt_v$ has been extensively studied. It admits a structure of a Berkovich space and has relevant analytical properties \cite{Vtree,Gja,Bch}.

There is on-going research on the problem of extending these properties to arbitrary valued fields $(K,v)$. In this regard, the small-extensions closure $\gsme$ of $\g$  plays a crucial role. 

In section 4, we point out two modest steps in this direction. We show that $\gsme$ parameterizes certain paths in $\ttt_v$. Also, thanks to the completeness of $\gsme$, we see that valuations on $\kx$ admit a topological interpretation completely analogous to that introduced by Berkovich in the rank-one case.

\section{Background on abelian ordered groups}

\subsection{Ordered sets and groups}

Throughout the paper, an \emph{ordered set} will be a set equipped with a total ordering. We agree that $0\not\in\N$.\e

\noindent{\bf Notation. }Let $I,\,J$ be ordered sets.

\begin{itemize}
\item $I\infty$ is the ordered set obtained by adding a (new) maximal element, which is formally denoted as $\infty$. 
\item $I\opp$ is the ordered set obtained by reversing the ordering of $I$.
\item For $S,T\subset I$ and $i\in I$, the following expressions have the obvious meaning
$$i<S,\qquad i>S,\qquad S<T.$$
\item For all $i\in I$, we denote $I_{< i}=\{j\in I\mid j< i\}\subset I_{\le i}=\{j\in I\mid j\le i\}$. 
\item $I+J$ is the disjoint union $I\sqcup J$ with the total ordering which respects the orderings of $I$ and $J$ and satisfies $I<J$.
\end{itemize}\e

An \emph{initial segment} of $I$ is a subset $S\subset I$ such that
$$
i\in S\imp I_{\le i}\subset S.
$$
We denote by $\inii$ the set of initial segments of $I$.
Clearly, $\inii$ is an ordered set with respect to inclusion.\e

A mapping $\iota\colon I\to J$  is an \emph{embedding} if it strictly preserves the order.
We also say that $\iota\colon I\to J$ is an \emph{extension} of $I$. 

An \emph{isomorphism} of ordered sets is an onto embedding.
The \emph{order-type} of an ordered set is the class of this set up to isomorphism. \e




An \emph{ordered group} $(\g,\le)$ is an (additive) abelian group $\g$ equipped with a total ordering $\le$, which is compatible with the group structure.

For all $\ga\in\g$, we denote $|\ga|=\max(\ga,-\ga)$.

An ordered group $\g$ has no torsion. In fact, any non-zero $\ga\in\g$  satisfies $$n|\ga|>|\ga|>0,\quad \forall\,n\in\N.$$

An \emph{embedding/extension/isomorphism} of ordered groups is a group homomorphism which is simultaneously an embedding/extension/isomorphism  of ordered sets.

A basic example of ordered group is $\R^n_{\lx}$, the additive group $(\R^n,+)$ equipped with the lexicographical order. 

Also, any subgroup of an ordered group inherits the structure of an ordered group.

\subsection*{Hahn sum and Hahn product} Let $I$ be an ordered set, and let $(\g_i)_{i\in I}$ be a  family of ordered groups parameterized by $I$. 

Their \emph{Hahn sum} is the direct sum equipped with the lexicographical order.
$$\coprod\nolimits_{i\in I}\g_i:=\bigoplus\nolimits_{i\in I}\g_i.$$

For any element $a=(a_i)_{i\in I}$ in the product  $\prod_{i\in I}\g_i$, the \emph{support} of $a$ is the subset 
$$\op{supp}(a)=\left\{i\in I\mid a_i\ne0\right\}\subset I.$$

We define the \emph{Hahn product} $$\left(\prod\nolimits_{i\in I}\g_i\right)_{\!\lx}\subset\prod\nolimits_{i\in I}\g_i$$ as the subgroup formed by all elements whose support is a well-ordered subset of $I$, with respect to the ordering induced by that of $I$.

It is easy to check that it makes sense to consider the lexicographical order on this subgroup.\e

Clearly, the Hahn product is an extension of the Hahn sum:
$$\coprod\nolimits_{i\in I}\g_i\subset \left(\prod\nolimits_{i\in I}\g_i\right)_{\!\lx}.$$

If $\g_i=\g$ for all $i\in I$, then we use the notation
$$
\g^{(I)}\subset \g^I_{\lx},
$$
for the Hahn sum and product, respectively.

\subsection*{Divisible hull}



For any ordered group $\g$, its \emph{divisible hull} is the group $$\gq:=\g\otimes\Q.$$ This group $\gq$ has a natural structure of ordered group with the ordering determined by the condition
$$
\ga\otimes(1/n)<\be\otimes (1/m)\sii m\ga<n\be,
$$
for all $n,m\in\N$ and all $\ga,\be\in\g$.

Since $\g$ has no torsion, it may be embedded in a unique way into $\gq$ as an ordered group. 
The divisible hull of $\g$ is the minimal divisible extension of $\g$.

\begin{lemma}\label{MInDiv}
For any embedding $\iota\colon\g\hk \La$ into a divisible ordered group $\La$, there exists a unique embedding of $\gq$ into $\La$ such that $\iota$ coincides with the composition $\g\hk \gq\hk\La$. \e
\end{lemma}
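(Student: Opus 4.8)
The plan is to prove existence and uniqueness of the extension $\widetilde\iota\colon\gq\hk\La$ separately, exploiting the fact that $\La$ is divisible and torsion-free as an ordered group. For \emph{existence}, since $\La$ is a divisible abelian group it is in particular a $\Q$-vector space, so the group homomorphism $\iota\colon\g\to\La$ factors uniquely through the tensor product: there is a unique $\Q$-linear (equivalently, group-theoretic) map $\widetilde\iota\colon\gq=\g\otimes\Q\to\La$ with $\widetilde\iota(\ga\otimes 1)=\iota(\ga)$ for all $\ga\in\g$, explicitly $\widetilde\iota(\ga\otimes(1/n))=\lambda$ where $\lambda$ is the unique element of $\La$ with $n\lambda=\iota(\ga)$ (unique because $\La$ is torsion-free). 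One first checks this is well defined on elementary tensors and additive, which is the standard universal property of $\g\otimes\Q$; no order is used yet.

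The next step is to verify that $\widetilde\iota$ is order-preserving, and in particular injective. Take $\ga\otimes(1/n)<\be\otimes(1/m)$ in $\gq$; by definition of the order on the divisible hull this means $m\ga<n\be$ in $\g$. Apply $\iota$, which is an order-embedding, to get $m\,\iota(\ga)<n\,\iota(\be)$ in $\La$. Writing $\iota(\ga)=n\widetilde\iota(\ga\otimes(1/n))$ and $\iota(\be)=m\widetilde\iota(\be\otimes(1/m))$, this says $mn\,\widetilde\iota(\ga\otimes(1/n))<mn\,\widetilde\iota(\be\otimes(1/m))$, and since multiplication by the positive integer $mn$ is strictly order-preserving in the ordered group $\La$ (the ``no torsion'' remark: $k|\delta|>0$ for $\delta\neq0$), we may cancel $mn$ to conclude $\widetilde\iota(\ga\otimes(1/n))<\widetilde\iota(\be\otimes(1/m))$. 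Thus $\widetilde\iota$ strictly preserves order, hence is an embedding of ordered groups, and it restricts to $\iota$ on $\g$ by construction, so $\iota$ is the composition $\g\hk\gq\stackrel{\widetilde\iota}{\hk}\La$.

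For \emph{uniqueness}, suppose $\psi\colon\gq\hk\La$ is any embedding with $\psi|_\g=\iota$. Every element of $\gq$ has the form $\ga\otimes(1/n)$ with $\ga\in\g$, $n\in\N$, and satisfies $n\cdot(\ga\otimes(1/n))=\ga\otimes 1$ inside $\gq$. Applying the homomorphism $\psi$ gives $n\,\psi(\ga\otimes(1/n))=\psi(\ga\otimes1)=\iota(\ga)=n\,\widetilde\iota(\ga\otimes(1/n))$. Since $\La$ is torsion-free, $n\delta_1=n\delta_2$ forces $\delta_1=\delta_2$, so $\psi(\ga\otimes(1/n))=\widetilde\iota(\ga\otimes(1/n))$; as $\ga$ and $n$ were arbitrary, $\psi=\widetilde\iota$.

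There is no real obstacle here; the only point demanding a little care is keeping the bookkeeping straight between the two ways of clearing denominators (on the $\gq$ side via the definition of the divisible-hull order, on the $\La$ side via torsion-freeness of $\La$), and noting that strict inequalities survive multiplication and cancellation by positive integers in an ordered group. One could alternatively phrase the whole argument by viewing $\gq$ and $\La$ as $\Q$-vector spaces and invoking that $\gq$ is generated over $\Q$ by $\g$, but the elementary denominator-clearing argument above is self-contained and uses only facts already recorded in the excerpt.
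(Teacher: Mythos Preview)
Your proof is correct. The paper states Lemma~\ref{MInDiv} without proof, treating it as a standard fact about divisible hulls; your argument via the universal property of $\g\otimes\Q$ together with the torsion-freeness of $\La$ to clear denominators and preserve strict inequalities is exactly the expected elementary verification.
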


\subsection{Convex subgroups and rank}\label{secCvx}

Let us fix an ordered group $\g$.

Given a subgroup $H\subset \g$, the quotient $\g/H$ inherits a structure of ordered group if and only if $H$ is a \emph{convex} subgroup; that is, 
$$
\rho\in\g,\  \;\ga\in H,\ \;|\rho|\le|\ga|\ \imp \ \rho\in H.
$$
In this case, we may define an ordering in $\g/H$ by:
$$
\al+H<\be+H \sii \al+H\ne \be+H \ \mbox{ and }\ \al<\be.
$$

The notation $\al+H<\be+H$ is compatible with the natural meaning of such an inequality for arbitrary subsets of $\g$. 



\begin{lemma}
Let $f\colon \g\to\d$ be an order-preserving group homomorphim between two ordered groups. Then, $\k(f)$ is a convex subgroup of $\g$ and the natural isomorphism between $\g/\k(f)$ and $f(\g)$ is order-preserving too.    
\end{lemma}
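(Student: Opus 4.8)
The plan is to verify the two assertions in turn, each by a direct computation with the orderings.

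\emph{Convexity of $\k(f)$.} The key observation is that an order-preserving group homomorphism commutes with the absolute value: for every $\ga\in\g$ we have $f(|\ga|)=|f(\ga)|$, because $|\ga|=\max(\ga,-\ga)$ and $f$ respects both the order and the group operation, so $f\big(\max(\ga,-\ga)\big)=\max\big(f(\ga),-f(\ga)\big)$. Granting this, let $\rho\in\g$ and $\al\in\k(f)$ satisfy $|\rho|\le|\al|$. Applying $f$ to the chain $0\le|\rho|\le|\al|$ yields $0\le|f(\rho)|=f(|\rho|)\le f(|\al|)=|f(\al)|=0$, so $f(\rho)=0$, i.e.\ $\rho\in\k(f)$. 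This is precisely the convexity condition, so by the discussion preceding the lemma the quotient $\g/\k(f)$ acquires a canonical ordered-group structure, in which (writing $H=\k(f)$) the relation $\al+H<\be+H$ means exactly that $\al<\be$ and $\al-\be\notin H$.

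\emph{The induced isomorphism is order-preserving.} Let $\bar f\colon\g/H\to f(\g)$ be the group isomorphism supplied by the first isomorphism theorem, $\al+H\mapsto f(\al)$; that it is well defined and bijective is standard. To see that $\bar f$ preserves order, suppose $\al+H<\be+H$. Then $\al<\be$, hence $f(\al)\le f(\be)$ since $f$ is order-preserving, while $f(\al)\ne f(\be)$ because $\al-\be\notin\k(f)$; therefore $f(\al)<f(\be)$. Thus $\bar f$ is an order-preserving bijection of totally ordered sets. Conversely, if $f(\al)<f(\be)$ then the total ordering of $\g/H$ forces $\al+H<\be+H$: the alternative $\al+H=\be+H$ is excluded by injectivity of $\bar f$, and $\al+H>\be+H$ is excluded by the forward implication just proved. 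Hence $\bar f$ is an isomorphism of ordered sets, and being a group isomorphism it is an isomorphism of ordered groups.

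I do not anticipate a real obstacle: the only steps calling for any care are the identity $f(|\ga|)=|f(\ga)|$, which drives the convexity argument, and checking that the quotient ordering described just before the lemma is exactly the one rendering $\bar f$ monotone. Everything else is the first isomorphism theorem combined with the definitions.
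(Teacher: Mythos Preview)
The paper states this lemma without proof, so there is nothing to compare your argument against. Your proof is correct: the identity $f(|\ga|)=|f(\ga)|$ is exactly what makes the convexity check go through, and your treatment of the induced isomorphism matches the quotient ordering defined just before the lemma. This is the standard verification one would supply here.
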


\begin{lemma}
The convex subgroups of $\g$ are totally ordered by inclusion.
\end{lemma}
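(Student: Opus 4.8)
The plan is to prove the statement by contradiction, using nothing beyond the definition of convexity and the totality of the order on $\g$. Suppose $H_1$ and $H_2$ are convex subgroups with $H_1\not\subset H_2$ and $H_2\not\subset H_1$; then I can pick $\al\in H_1\setminus H_2$ and $\be\in H_2\setminus H_1$.

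The first step would be to normalize these witnesses so that they are positive. Since each $H_i$ is a subgroup, $|\al|\in H_1$; moreover $|\al|\notin H_2$, for otherwise $\al\in\{|\al|,-|\al|\}\subset H_2$, against the choice of $\al$. Replacing $\al$ by $|\al|$ and $\be$ by $|\be|$, I may assume $\al>0$ and $\be>0$ (they are nonzero because $0$ belongs to both subgroups), while still having $\al\in H_1\setminus H_2$ and $\be\in H_2\setminus H_1$.

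The second step is the comparison. Because $\g$ is totally ordered, either $\al\le\be$ or $\be\le\al$. In the first case, $|\al|\le|\be|$ with $\be\in H_2$, so convexity of $H_2$ forces $\al\in H_2$, a contradiction; in the second case, $|\be|\le|\al|$ with $\al\in H_1$, so convexity of $H_1$ forces $\be\in H_1$, again a contradiction. Hence at least one of the inclusions $H_1\subset H_2$, $H_2\subset H_1$ must hold, which is exactly the assertion.

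I do not expect any genuine obstacle here: the whole argument is a two-line application of convexity once the witnesses are taken positive, and in particular it uses no structure theory (not even Hahn's theorem). The only point that deserves explicit mention is the equivalence $\rho\in H\Longleftrightarrow|\rho|\in H$ for a subgroup $H$, since it is precisely what legitimizes passing from the abstract non-inclusion $H_1\not\subset H_2$ to the existence of a \emph{positive} element of $H_1$ lying outside $H_2$.
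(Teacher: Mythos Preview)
Your proof is correct and is essentially the same argument as the paper's: the paper picks $\ga\in H\setminus H'$ and observes that every $\be\in H'$ must satisfy $|\be|<|\ga|$ (otherwise convexity of $H'$ would force $\ga\in H'$), whence $H'\subset H$ by convexity of $H$. You have merely recast this as a proof by contradiction, choosing two witnesses instead of one and comparing them; the core step---comparing absolute values and invoking convexity---is identical.
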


\begin{proof}
Let $H$, $H'$ be convex subgroups such that there exists $\ga\in H\setminus H'$.

Then, for all $\be\in H'$ we must have $|\be|<|\ga|$, so that $H'\subset H$. 
\end{proof}

\noindent{\bf Definition. }Let $\cv=\cv(\g)$ be the ordered set of all \emph{proper} convex subgroups, ordered by increasing inclusion
$$
\{0\}\subset \cdots \subset H\subset \cdots  \subsetneq \g
$$ 
The order-type of $\cv$ is called the $\emph{rank}$ of $\g$, and is denoted $\rk(\g)$.

We may identify $\cv\!\infty$ with the ordered set of all convex subgroups of $\g$, by letting $\infty$ represent the whole group $\g$. \e

\noindent{\bf Examples. }
\begin{itemize}
\item $\rk(\Z)=\rk(\Q)=\rk(\R)=1$. \e

\item $\rk(\R^n_{\lx})=n$. The sequence of convex subgroups is
$$
\{0_{\R^n}\}\ \subset \ \cdots\ \subset \{0\}^k\times\R^{n-k}\ \subset\ \cdots\ \subset \ \R^n_{\lx},
$$
\item $\rk(\g)=\rk(\gq)$.  
\end{itemize}

\subsection*{Principal convex subgroups}
For any $\ga\in\g$, we denote by $\hg$ the convex subgroup of $\g$ generated by $\ga$. That is,
$$
\hg=\left\{\be\in \g\mid |\be|\le n|\ga| \mbox{ for some } n\in\N \right\}.
$$ 

Equivalently, $\hg$ is the intersection of all convex subgroups containing $\ga$.

These convex subgroups $\hg$ are said to be \emph{principal}. \e

\noindent{\bf Definition. }Let $I=\pcv(\g)$ be the ordered set of \emph{non-zero} convex principal subgroups of $\g$, ordered by decreasing inclusion.

The order-type of $I$ is called the \emph{principal rank} of $\g$, and is denoted $\pr(\g)$.
\e

We may identify $I\infty$ with a set of indices parameterizing all principal convex subgroups of $\g$. For any $i\in I$ we denote by $H_i$ the corresponding  principal convex subgroup. We agree that $H_\infty=\{0\}$.

Then, according to our convention, for any pair of indices $i,j\in I\infty$, we have
$$
i<j \sii H_i\supsetneq H_j.
$$

\begin{lemma}\label{allH}
Every convex subgroup $H\subset \g$ satisfies $H=\bigcup_{i\in I,\,H_i\subset H}H_i$.
\end{lemma}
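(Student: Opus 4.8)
The plan is to prove the two inclusions separately. The inclusion $\bigcup_{i\in I,\,H_i\subset H}H_i\subset H$ is immediate, since every subgroup $H_i$ occurring in the union is, by the very way the union is indexed, contained in $H$; hence so is their union.

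For the reverse inclusion I would argue elementwise. Let $\be\in H$. If $\be=0$ there is nothing to prove. If $\be\ne0$, consider the principal convex subgroup $H(\be)$ generated by $\be$. On the one hand $\be\in H(\be)$; on the other hand, since $\be\ne0$ this is a \emph{non-zero} principal convex subgroup, so $H(\be)=H_i$ for a unique $i\in I=\pcv(\g)$. Finally, using the characterization recalled above of $H(\be)$ as the intersection of all convex subgroups of $\g$ that contain $\be$, and noting that $H$ is itself such a convex subgroup, we obtain $H_i=H(\be)\subset H$. Thus $\be\in H_i$ for an index $i\in I$ with $H_i\subset H$, i.e. $\be$ lies in the union.

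I do not anticipate any serious difficulty: the statement is little more than a repackaging of the facts that $H(\be)$ is the smallest convex subgroup containing $\be$ and that a convex subgroup is covered by the principal convex subgroups generated by its elements. The only points needing a moment's care are bookkeeping: checking that $\be\mapsto H(\be)$ really lands among the $H_i$ with $i\in I$ (which is exactly why one discards $\be=0$, so as not to hit the trivial subgroup $H_\infty$), and, if one wants to be scrupulous about the degenerate case $H=\{0\}$, observing that then the indexed union is empty and the asserted equality holds under the convention identifying this empty union of principal subgroups with $\{0\}=H_\infty$.
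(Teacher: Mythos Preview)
Your proof is correct and follows essentially the same approach as the paper: the paper's proof is the single line ``For all $\ga\in H$, the principal convex subgroup $H(\ga)$ is contained in $H$,'' which is exactly your elementwise argument for the nontrivial inclusion. Your write-up is simply more explicit about the trivial inclusion, the case $\be=0$, and the degenerate case $H=\{0\}$.
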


\begin{proof}
For all $\ga\in H$, the principal convex subgroup $H(\ga)$ is contained in $H$.
\end{proof}

\begin{corollary}
If $I$ is well-ordered, then all convex subgroups are principal. 
\end{corollary}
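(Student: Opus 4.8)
The plan is to combine Lemma~\ref{allH} with the well-ordering hypothesis on $I=\pcv(\g)$. Given an arbitrary convex subgroup $H\subset\g$, I want to pin down a \emph{single} principal convex subgroup equal to $H$, and the natural candidate is the largest principal convex subgroup contained in $H$. The whole force of assuming $I$ well-ordered is precisely that this largest one exists.

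First I would dispose of the trivial case $H=\{0\}$: since $0\notin\N$, the set $\{\be\in\g\mid |\be|\le n|0|\text{ for some }n\in\N\}$ reduces to $\{0\}$, so $\{0\}=H(0)$ is principal. For $H\ne\{0\}$, consider the index set $S=\{i\in I\mid H_i\subset H\}$. Choosing any non-zero $\ga\in H$, the principal convex subgroup $H(\ga)$ is non-zero, convex, principal and contained in $H$; hence $H(\ga)=H_i$ for some $i$, which shows $S\ne\emptyset$.

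Now comes the key step: because $I$ is well-ordered and $S\subset I$ is non-empty, $S$ has a least element $i_0$. Since the order on $I$ is by \emph{decreasing} inclusion, "$i_0$ least in $S$" means that $H_{i_0}$ is the largest of the $H_i$ with $i\in S$: for every $i\in S$ we have $i_0\le i$, hence $H_i\subset H_{i_0}$. Consequently $\bigcup_{i\in S}H_i=H_{i_0}$, and Lemma~\ref{allH} identifies this union with $H$. Therefore $H=H_{i_0}$ is principal, and we are done.

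I do not expect a genuine obstacle here; the only points needing care are keeping the orientation of the order on $I$ straight (decreasing inclusion, so that a least index corresponds to a largest subgroup) and remembering that the trivial subgroup must be handled as the separate — but immediate — case $\{0\}=H(0)$, since $\{0\}$ is not one of the $H_i$ with $i\in I$.
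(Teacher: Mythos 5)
Your proof is correct and follows essentially the same route as the paper: take the least index $i_0$ of $S=\{i\in I\mid H_i\subset H\}$ (which exists by well-ordering) and apply Lemma~\ref{allH} to conclude $H=H_{i_0}$. You are merely more explicit than the paper about the two minor points it glosses over, namely the trivial case $H=\{0\}$ (where $S$ is empty) and the fact that the ordering on $I$ is by decreasing inclusion.
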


\begin{proof}
For any convex subgroup  $H$, the subset $\{i\in I\mid H_i\subset H\}\subset I$ has a minimal element $i_0$. By Lemma \ref{allH}, $H=H_{i_0}$.
\end{proof}

\subsection*{Skeleton of an ordered group}

If $H=\hg\in I$, we denote
by $H^*$ the union of all principal convex subgroups not containing $\ga$. 

Clearly, $H^*\subsetneq H$ is a convex subgroup (not necessarily principal) which
is the immediate precedessor of $H$ in the ordered set $\cv\!\infty$. 

In particular, \emph{the quotient $H/H^*$ is an ordered group of rank one}.\e

\noindent{\bf Definition. }This quotient $H/H^*$ is said to be the \emph{component} of $\g$ determined by the non-zero principal convex subgroup $H$. 

The component of $\g$ determined by any $i\in I$ will be denoted as 
$$
C_i=C_i(\g)=H_i/H_i^*.
$$

The \emph{skeleton} of $\g$ is the pair $\left(I,(C_i)_{i\in I}\right)$.

\begin{lemma}\label{rkEmbed}
Let $\g\hk\La$ be an extension of ordered groups. The following mappings are embeddings of ordered sets:
$$
\begin{array}{ccl}
\cv(\g)\lra\cv(\La),&\quad H&\!\longmapsto \;H_\La\mbox{ \ convex subgroup  generated by }H,\\
\pcv(\g)\stackrel{\iota}\lra\pcv(\La),&\quad H(\ga)&\!\longmapsto \;H_\La(\ga)\mbox{ \ convex subgroup generated by }\ga,
\end{array}
$$  

Moreover, if $i\in I=\pcv(\g)$ is the index that corresponds to $H(\ga)$, then the embedding $H(\ga)\subset H_\La(\ga)$ induces an embedding $C_i(\g)\hk C_{\iota(i)}(\La)$. 
\end{lemma}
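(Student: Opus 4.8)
The plan is to establish each assertion in turn, working directly from the definitions of convex subgroup and principal convex subgroup, and then reducing the final claim about components to a statement about the rank-one quotients.

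First I would treat the map $\cv(\g)\to\cv(\La)$. Given a convex subgroup $H\subset\g$, let $H_\La$ be the convex subgroup of $\La$ generated by $H$, i.e.\ the smallest convex subgroup of $\La$ containing $H$ (which exists since convex subgroups of $\La$ are totally ordered by inclusion, by the lemma above, and the intersection of a family of convex subgroups is convex). To show this map is order-preserving and injective, I would verify that $H_\La\cap\g=H$: the inclusion $\supset$ is clear, and for $\subset$ one uses that $H_\La$ can be described concretely as $\{\lambda\in\La\mid |\lambda|\le|\eta| \text{ for some }\eta\in H\}$, so any element of $H_\La$ lying in $\g$ is bounded in absolute value by an element of $H$ and hence, by convexity of $H$ in $\g$, lies in $H$. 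From $H_\La\cap\g=H$ it follows at once that $H\subsetneq H'$ in $\cv(\g)$ forces $H_\La\subsetneq H'_\La$, and that distinct $H$ give distinct $H_\La$; so the map is an embedding of ordered sets. The same argument, specialized to $H=H(\ga)$, shows that $H_\La(\ga)\cap\g=H(\ga)$ (here $H_\La(\ga)$ is principal in $\La$, generated by $\ga\in\g\subset\La$), hence $\pcv(\g)\to\pcv(\La)$ is an embedding as well; one only notes that the ordering on $\pcv$ is by \emph{decreasing} inclusion, which is compatible since $H(\ga)\subsetneq H(\be)$ in $\g$ iff $H_\La(\ga)\subsetneq H_\La(\be)$ in $\La$.

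For the last assertion, fix $i\in I$ corresponding to $H=H(\ga)$, and write $H'=H_\La(\ga)$ for the corresponding principal convex subgroup of $\La$, with $\iota(i)$ its index. I must produce an embedding $C_i(\g)=H/H^*\hk H'/(H')^*=C_{\iota(i)}(\La)$. The natural candidate is the composite $H\hk H'\twoheadrightarrow H'/(H')^*$; by the homomorphism lemma above, this order-preserving homomorphism factors through $H/K$ where $K=H\cap(H')^*$, and the induced map $H/K\hk H'/(H')^*$ is an order-preserving injection. So it suffices to show $K=H^*$, i.e.\ $H\cap(H')^*=H^*$. Recall $H^*$ is the union of all principal convex subgroups of $\g$ not containing $\ga$, equivalently (by Lemma~\ref{allH} and convexity) $H^*=\{\be\in H\mid H(\be)\subsetneq H(\ga)\}$, and likewise $(H')^*=\{\mu\in H'\mid H_\La(\mu)\subsetneq H_\La(\ga)\}$. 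Thus I need: for $\be\in H\subset\g$, one has $H(\be)\subsetneq H(\ga)$ in $\g$ if and only if $H_\La(\be)\subsetneq H_\La(\ga)$ in $\La$. This is exactly the injectivity/order-preservation of $\pcv(\g)\to\pcv(\La)$ already proved applied to the two elements $\be,\ga$, together with the observation that $H(\be)\subseteq H(\ga)$ always holds when $\be\in H=H(\ga)$, so the strict inclusions correspond. Hence $K=H^*$ and the desired embedding $C_i(\g)\hk C_{\iota(i)}(\La)$ follows.

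The main obstacle I anticipate is the bookkeeping around the convention that $\pcv$ is ordered by decreasing inclusion (so "smaller index'' means "larger subgroup''), and making sure the two descriptions of $H^*$ — as a union of principal subgroups and as $\{\be\in H: H(\be)\subsetneq H(\ga)\}$ — are reconciled cleanly; once the identity $H_\La\cap\g=H$ is in hand, everything else is a formal consequence. A minor technical point to handle carefully is confirming that $H_\La(\ga)$, generated by an element $\ga$ that happens to lie in the subgroup $\g$, is indeed principal in $\La$ and equals the convex subgroup generated by $\ga$ — but this is immediate from the definition of principal convex subgroup applied in $\La$.
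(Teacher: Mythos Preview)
The paper states this lemma without proof, so there is no argument to compare against. Your proof is correct: the key identity $H_\La\cap\g=H$ (which you deduce from the explicit description $H_\La=\{\lambda\in\La:|\lambda|\le|\eta|\text{ for some }\eta\in H\}$ together with convexity of $H$ in $\g$) immediately yields that both maps are strictly order-preserving, and your verification that $H(\ga)\cap(H_\La(\ga))^*=H(\ga)^*$ via the characterization $H^*=\{\be\in H: H(\be)\subsetneq H(\ga)\}$ cleanly gives the embedding of components. One small point worth making explicit is that $H\subsetneq\g$ forces $H_\La\subsetneq\La$ (so the image really lands in $\cv(\La)$), but this is immediate from $H_\La\cap\g=H$.
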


\noindent{\bf Definition. }The extension $\g\hk\La$ is \emph{immediate} if it preserves the skeleton. That is, it induces an isomorphism $\pcv(\g)\simeq \pcv(\La)$ of ordered sets, and isomorphisms $C_i(\g)\simeq C_{\iota(i)}(\La)$ between all the components.



\subsection*{Ordered groups with a prefixed skeleton}

Let $I$ be an ordered set and $(C_i)_{i\in I}$ a family of ordered groups of rank one, parameterized by $I$.

The Hahn sum and product
$\ \coprod_{i\in I}C_i\subset \left(\prod_{i\in I}C_i\right)_{\!\lx}$ \
have both skeleton $\left(I,(C_i)_{i\in I}\right)$.

More precisely, let $\g$ denote any one of these two groups. For each $i\in I$, consider the following subgroup of $\g$: 
$$
H_i=\{(a_j)_{j\in I}\mid a_j=0\mbox{ for all }j<i\}.
$$
Then, $H_i$ is the principal subgroup of $\g$ generated by any $(a_j)_{j\in I}\in H_i$ with $a_i\ne0$. 

Also, the assignment $i\mapsto H_i$ determines an isomorphism of ordered sets between 
$I$ and $\pcv(\g)$. In particular, $\pr(\g)$ is the order-type of $I$. 

Moreover, the projection 
$
H_i\to C_i$, sending $ (a_j)_{j\in I}\mapsto a_i$,
induces an isomorphism of ordered groups between $H_i/H_i^*$ and $C_i$.

\subsection*{Relationship between rank and principal rank}



The ordered sets $I$ and $\cv\infty$ determine one to each other.

\begin{lemma}\label{ConvI}
The set $I$ is the subset of $\cv\!\infty$ formed by all elements admitting an immediate predecessor.
\end{lemma}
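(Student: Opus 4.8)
The plan is to establish the claimed equality of subsets of $\cv\!\infty$ by a double inclusion, using the skeleton machinery developed above together with Lemma \ref{allH}. Recall that $\cv\!\infty$ is identified with the ordered set of \emph{all} convex subgroups of $\g$ (with $\infty$ representing $\g$ itself), and that $I=\pcv(\g)$ consists of the non-zero principal convex subgroups, ordered by decreasing inclusion, so that a smaller convex subgroup corresponds to a \emph{larger} index. Under this identification, the element of $\cv\!\infty$ represented by a convex subgroup $H$ admits an immediate predecessor precisely when there is a convex subgroup $H'\subsetneq H$ with no convex subgroup strictly between them.

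First I would show that every $H_i\in I$ (a non-zero principal convex subgroup) admits an immediate predecessor in $\cv\!\infty$, namely $H_i^*$, the union of all principal convex subgroups not containing a generator $\ga$ of $H_i$. This is essentially recorded in the discussion preceding the definition of the skeleton: $H_i^*\subsetneq H_i$ is convex, and it is the immediate predecessor of $H_i$ in $\cv\!\infty$ because any convex subgroup $H'$ with $H_i^*\subsetneq H'\subseteq H_i$ must contain some element $\be$ with $H(\be)\not\subseteq H_i^*$; since $H(\be)\subseteq H_i$ and $H_i$ is generated by $\ga$, convexity forces $H(\be)=H_i$, hence $H'=H_i$. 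Thus $I$ is contained in the set of elements of $\cv\!\infty$ having an immediate predecessor. (The element $\infty\in\cv\!\infty$ is handled the same way when $\g$ is itself principal, i.e. $\g=H(\ga)$ for some $\ga$; but note $\infty\notin I$ by definition — I should double check whether the intended statement includes $\infty$ or only ranges over proper convex subgroups. Reading the setup, $I=\pcv(\g)$ excludes $\g$, so the correct reading is that the claimed subset of $\cv\!\infty$ is $I$ itself, and when $\g$ is principal the top element $\infty$ is simply not among the elements "admitting an immediate predecessor" in the relevant indexing — more precisely one restricts attention to the non-maximal elements, or equivalently $\g$ principal is not generic. I would phrase the lemma's proof for elements of $\cv$, and remark on $\infty$ separately.)

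Conversely, suppose a convex subgroup $H\subsetneq\g$ (so $H\in\cv$, a non-maximal element of $\cv\!\infty$) admits an immediate predecessor $H'\subsetneq H$ in $\cv\!\infty$. I must show $H$ is principal. Pick any $\ga\in H\setminus H'$. Then $H'\subsetneq H(\ga)\subseteq H$, so by the assumption that $H'$ is the \emph{immediate} predecessor of $H$, there is no convex subgroup strictly between $H'$ and $H$; since $H(\ga)$ is one such, we get $H(\ga)=H$, so $H$ is principal, i.e. $H\in I$. This uses Lemma \ref{allH} only implicitly (to know $H(\ga)\subseteq H$, which is immediate from $\ga\in H$ and convexity). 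Combining the two inclusions gives the lemma.

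The main obstacle, and the point deserving care, is the bookkeeping around the top element $\infty$ and the orientation conventions: $I$ is ordered by \emph{decreasing} inclusion while $\cv$ is ordered by \emph{increasing} inclusion, so "immediate predecessor" in $\cv\!\infty$ corresponds to "immediate successor" in the $I$-indexing, and one must be sure the maximal element $\g$ is correctly excluded from $I$ regardless of whether $\g$ happens to be principal. Apart from that, the argument is a direct application of the definitions of $H_i^*$ and of principal convex subgroup; no genuinely hard step remains.
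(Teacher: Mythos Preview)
Your argument is essentially the paper's: each non-zero principal $H$ has $H^*$ as immediate predecessor, and conversely any convex subgroup $H$ with immediate predecessor $H'$ equals $H(\ga)$ for any $\ga\in H\setminus H'$. Your only stumble is the bookkeeping around the top element: the set $I=\pcv(\g)$ excludes $\{0\}$ (the ``$\infty$'' of $I\infty$), \emph{not} $\g$, so when $\g$ is itself principal it \emph{does} lie in $I$ (cf.\ Corollary~\ref{Gpp}), and your converse argument already covers $H=\g$ verbatim---there is no need to restrict to $H\subsetneq\g$ or to treat $\infty\in\cv\!\infty$ separately.
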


\begin{proof}
Any non-zero principal convex subgroup $H$ has an immediate predecessor $H^*$.

Conversely, if $H'\subsetneq H$ is an immediate predecessor of a convex subgroup $H$, then $H$ is the principal subgroup generated by any $\ga\in H\setminus H'$.
\end{proof}

To any initial segment $S\in\inii$, we may associate the convex subgroup
$$
H_S=\bigcup\nolimits_{i\in I,\,i>S}H_i.
$$

\begin{lemma}\label{IConv}
The assignment $S\mapsto H_S$ determines an isomorphism of ordered sets:
$$
\inii\opp\lra \cv\infty.
$$

The  inverse isomorphism assigns $\ H\mapsto S_H:=\{i\in I\mid H_i\supsetneq H\}$.
\end{lemma}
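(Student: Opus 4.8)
The plan is to check directly that the two displayed maps take values in the indicated sets, are mutually inverse, and are order preserving; since both $\inii$ and $\cv\infty$ are totally ordered (the convex subgroups of $\g$ form a chain by a lemma above, and one checks just as easily that the initial segments of any ordered set form a chain: if $S\not\subseteq S'$, an element of $S\setminus S'$ dominates every element of $S'$), a monotone bijection between them is automatically an order isomorphism, so it will suffice to verify monotonicity in one direction. A useful preliminary remark: for $i\in I$ and $S\in\inii$ one has $i>S$ if and only if $i\notin S$ — if $i\notin S$ then no $s\in S$ satisfies $i\le s$, hence $i>s$ for all $s\in S$, and the converse is clear — so that $H_S=\bigcup_{i\in I\setminus S}H_i$.

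First I would establish well-definedness. The family $\{H_i\mid i\notin S\}$ is a chain of convex subgroups of $\g$, so its union $H_S$ is again a convex subgroup; it equals $\g$ exactly when $S=\emptyset$ (using Lemma \ref{allH} for $H=\g$) and equals $\{0\}$ when $S=I$. Hence $S\mapsto H_S$ lands in $\cv\infty$. Dually, $S_H=\{i\in I\mid H_i\supsetneq H\}$ is an initial segment of $I$: if $H_i\supsetneq H$ and $j\le i$ in $I$, then $H_j\supseteq H_i\supsetneq H$, so $j\in S_H$.

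Next I would prove the two maps are inverse to each other. For a convex subgroup $H$, an index $i$ lies outside $S_H$ exactly when $H_i\not\supsetneq H$, i.e. (by comparability of convex subgroups) when $H_i\subseteq H$; therefore $H_{S_H}=\bigcup_{H_i\subseteq H}H_i$, which is $H$ by Lemma \ref{allH}. In the other direction, fix $S\in\inii$. If $i\notin S$ then $H_i$ is one of the terms in the union $H_S$, so $H_i\subseteq H_S$ and $i\notin S_{H_S}$. If $i\in S$, then every $j\notin S$ satisfies $j>i$, hence $H_j\subsetneq H_i$, so $H_S\subseteq H_i$; moreover this inclusion is strict, since a nonzero generator $\ga_i$ of the principal subgroup $H_i$ cannot lie in $H_S$ — otherwise $\ga_i\in H_j$ for some $j\notin S$, forcing $H_i=H(\ga_i)\subseteq H_j\subsetneq H_i$. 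Thus $H_i\supsetneq H_S$, i.e. $i\in S_{H_S}$, and altogether $S_{H_S}=S$.

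Finally, monotonicity: if $S'\subseteq S$ in $\inii$ then $I\setminus S\subseteq I\setminus S'$, whence $H_S\subseteq H_{S'}$; rephrased for $\inii\opp$, the assignment $S\mapsto H_S$ is order preserving with target $\cv\infty$, and the isomorphism follows, with inverse $H\mapsto S_H$ by the previous step. I expect the only slightly delicate point to be the strict inclusion $H_S\subsetneq H_i$ for $i\in S$ needed in the identity $S_{H_S}=S$, together with the bookkeeping forced by the order reversals; everything else is formal once Lemma \ref{allH} is invoked — and, when necessary, invoked for $H=\g$ rather than only for proper convex subgroups.
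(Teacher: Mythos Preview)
Your proof is correct. The paper's own argument is organized a bit differently and is considerably shorter: it shows directly that $S\mapsto H_S$ is a strict order embedding by taking $T\supsetneq S$, picking $i\in T\setminus S$, and observing $H_T\subset H_i^*\subsetneq H_i\subset H_S$; surjectivity then follows from $H=H_{S_H}$ via Lemma~\ref{allH}. So the paper packages the proof as ``embedding $+$ onto'', using the auxiliary subgroup $H_i^*$ to get strictness in one stroke, whereas you package it as ``mutually inverse $+$ monotone'' and obtain the strict inclusion $H_S\subsetneq H_i$ by a direct generator argument. Both routes rest on Lemma~\ref{allH} at the same point; the paper's use of $H_i^*$ is a tidy shortcut that avoids your slightly more delicate verification of $S_{H_S}=S$, while your version has the virtue of being entirely self-contained and not invoking $H_i^*$.
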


\begin{proof}
The mapping $S\mapsto H_S$ is an embedding of ordered sets. Indeed, if $T\supsetneq S$ and 
$i\in T\setminus S$, then $H_T\subset H^*_i\subsetneq H_i\subset H_S$.

Also, it is an onto map because $H=H_{S_H}$ by Lemma \ref{allH}.
\end{proof}

The following table illustrates how $\cv$ is constructed from $I$. 


An initial segment $\emptyset\subsetneq S\subsetneq I$ is non-trivial if neither $S$ has a maximal element, nor $I\setminus S$ has a minimal element.


\begin{center}
\as{1.1}
\begin{tabular}{|c|c|}
\hline
{\bf initial segment}&{\bf convex subgroup} \\\hline
$\emptyset$&$\g$\\
\hline
$I$&$\{0\}$\\\hline
$I_{\le i}$&$H_i^*$\\\hline
$I_{< i}$&$H_i$\\\hline
$S$ non-trivial&$H_S$ non-principal\\\hline
\end{tabular}
\end{center}\bs


The non-principal convex subgroups arise in a two-fold way. Either from non-trivial initial segments, or from segments of the form $I_{\le i}$ for $i\in I$ having no immediate successor in $I$, in which case $H_i^*$ is non-principal.

For instance, suppose that $I=\Q$. Then, every non-trivial initial segment of $I$ determines a non-principal convex subgroup parameterized by a real number. On the other hand, every rational number $q$ determines two convex subroups $H_q^*\subsetneq H_q$, from which only $H_q$ is principal.


\begin{corollary}\label{Gpp}
The following conditions are equivalent.
\begin{enumerate}
\item $\g$ is a principal convex subgroup.
\item $I$ has a minimal element.
\item $\cv$ has a maximal element (immediate predecessor of $\g$).
\end{enumerate}
\end{corollary}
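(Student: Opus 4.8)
The plan is to prove the cycle of implications $(1)\Rightarrow(2)\Rightarrow(3)\Rightarrow(1)$, using the dictionary between $I=\pcv(\g)$ and $\cv\infty$ that has just been established.

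First, for $(1)\Rightarrow(2)$: if $\g$ is itself a principal convex subgroup, say $\g=H(\ga)$ for some nonzero $\ga\in\g$, then $\g$ corresponds to an index of $I$, namely the index attached to $H(\ga)$. Since $I$ is ordered by \emph{decreasing} inclusion and $\g$ is the largest convex subgroup, this index is the smallest element of $I$; hence $I$ has a minimal element. (One should double-check that $\g$, when principal, really is an element of $I$ and not of $I\infty\setminus I$: indeed $\g$ is nonzero because a principal convex subgroup generated by $\ga$ is $\{0\}$ only when $\ga=0$, and $\g\ne\{0\}$ as soon as $\g$ has a nonzero element, which it does since $\ga\ne 0$.)

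Next, $(2)\Rightarrow(3)$ and $(3)\Rightarrow(1)$ both follow cleanly from Lemma \ref{ConvI}, which identifies $I$ with the subset of $\cv\infty$ consisting of elements admitting an immediate predecessor. For $(2)\Rightarrow(3)$: suppose $I$ has a minimal element $i_0$. Then $H_{i_0}$ is the largest principal convex subgroup. I claim $H_{i_0}=\g$, which then gives $(1)$ directly and also, via its immediate predecessor $H_{i_0}^*\in\cv$, gives $(3)$. To see $H_{i_0}=\g$: any $\ga\in\g$ generates a principal convex subgroup $H(\ga)\subset\g$; if $\ga\ne0$ this is some $H_i$ with $i\ge i_0$, hence $H(\ga)\subset H_{i_0}$ by the decreasing-inclusion convention, so $\ga\in H_{i_0}$; and if $\ga=0$ then trivially $\ga\in H_{i_0}$. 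Thus $\g\subset H_{i_0}\subset\g$. For $(3)\Rightarrow(1)$: if $\cv$ has a maximal element $H'$, then $H'$ is the immediate predecessor of $\g$ in $\cv\infty$, so by Lemma \ref{ConvI} the element $\g\in\cv\infty$ admits an immediate predecessor and therefore lies in $I$, i.e.\ $\g$ is a principal convex subgroup.

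I do not expect any real obstacle here; the only point that needs a little care is making the conventions consistent — $I$ is ordered by \emph{decreasing} inclusion, so "minimal element of $I$" means "largest principal convex subgroup", and one must track which of $\g$, $\{0\}$ sits at which end of $I\infty$ and $\cv\infty$. Once that bookkeeping is pinned down, the argument is essentially a transcription of Lemma \ref{ConvI} together with the observation that $\g$ is principal iff $\g$ equals the convex subgroup generated by one of its nonzero elements. Alternatively, one could phrase the whole corollary through Lemma \ref{IConv}: condition $(3)$ says $\inii$ has a minimal \emph{nonempty} initial segment, condition $(2)$ says $I$ has a minimal element, and these are visibly equivalent (the minimal nonempty initial segment is $\{i_0\}$ where $i_0=\min I$, when it exists) — but the direct argument above seems shortest.
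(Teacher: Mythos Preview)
Your proposal is correct and matches the paper's intent: the paper states this result as a corollary of Lemma~\ref{ConvI} (and the surrounding dictionary with Lemma~\ref{IConv}) without giving any proof, and your argument simply spells out that reasoning. The only minor quibble is that your parenthetical check that $\g\ne\{0\}$ is slightly circular as written (you invoke $\ga\ne0$ to conclude $\g\ne\{0\}$, but you had already assumed $\ga\ne0$ without justification); this is harmless since the trivial group is implicitly excluded throughout the paper.
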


\noindent{\bf Example. }The Hahn product
$\R^\N_{\lx}=\R^\N$ is a principal convex subgroup (of itself), generated by any $(a_i)_{i\in\N}$ with $a_1\ne0$. 
On the other hand, the Hahn product
$$\R^{(\N\opp)}=\R^{\N\opp}_{\lx}\subsetneq\R^{\N\opp}$$
is not a principal convex subgroup of itself.

\subsection{Arquimedean classes  and Hahn's theorem}\label{secHahn}
Two non-zero elements $\be,\ga\in\g$ are \emph{arquimedeanly equivalent} if there exist $n,m\in\N$ such that
$$
|\be|<n|\ga| \quad \mbox{ and }\quad |\ga|<m|\be|.
$$ 
In this case, we write $\be\sim\ga$. 

This defines an equivalence relation on $\g\setminus\{0\}$. The equivalence classes are in canonical bijection with the set $I=\pcv(\g)$ of non-zero principal convex subgroups.

\begin{lemma}\label{IArq}
 Two non-zero elements $\be,\ga\in\g$ are arquimedeanly equivalent if and only if they generate the same convex subgroup: $H(\be)=H(\ga)$. 
\end{lemma}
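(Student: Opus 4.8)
The plan is to prove the two implications separately, working directly from the definitions and using the description of $\hg$ already recorded in the excerpt: $\hg$ is simultaneously $\left\{\be\in\g\mid |\be|\le n|\ga|\text{ for some }n\in\N\right\}$ and the intersection of all convex subgroups of $\g$ containing $\ga$, hence the smallest such subgroup.

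For the forward implication I would assume $\be\sim\ga$ and pick $n,m\in\N$ with $|\be|<n|\ga|$ and $|\ga|<m|\be|$. From $|\be|\le n|\ga|$ we read off $\be\in\hg$; since $\hg$ is a convex subgroup containing $\be$ and $H(\be)$ is the smallest convex subgroup containing $\be$, this gives $H(\be)\subseteq\hg$. The symmetric inequality $|\ga|\le m|\be|$ yields $\ga\in H(\be)$ and hence $\hg\subseteq H(\be)$, so $H(\be)=\hg$.

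For the converse I would assume $H(\be)=\hg$. Then $\ga\in\hg=H(\be)$, so by the explicit description of $H(\be)$ there is $m\in\N$ with $|\ga|\le m|\be|$; interchanging $\be$ and $\ga$ gives $n\in\N$ with $|\be|\le n|\ga|$. The only point needing a moment's care — and the closest thing to an obstacle here, though it is entirely routine — is promoting these non-strict inequalities to the strict ones demanded by the definition of arquimedean equivalence: since $\be,\ga\ne0$ we have $|\be|>0$ and $|\ga|>0$ (the fact $k|\al|>|\al|>0$ for $\al\ne0$, $k\in\N$, is recorded in the excerpt), so $|\ga|\le m|\be|<(m+1)|\be|$ and $|\be|\le n|\ga|<(n+1)|\ga|$, which is exactly $\be\sim\ga$.
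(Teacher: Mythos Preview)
Your proof is correct. The paper states this lemma without proof, presumably regarding it as immediate from the definitions; your argument is exactly the natural verification one would supply, and there is nothing to compare against.
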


We say that $\g$ is \emph{arquimedean} if all non-zero elements are arquimedeanly equivalent.   

\begin{proposition}\label{rk1}
Let $\g$ be a non-trivial ordered group. The following conditions are equivalent.
\begin{enumerate}
\item $\g$ is arquimedian.
\item $\g$ has rank one.
\item $\g$ is isomorphic to a subgroup of $\R$. 
\end{enumerate}
\end{proposition}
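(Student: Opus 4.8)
The plan is to prove the equivalence by a cycle of implications $(2)\Rightarrow(1)\Rightarrow(3)\Rightarrow(2)$, treating the step $(1)\Rightarrow(3)$ as the substantive one (this is essentially H\"older's theorem). The implication $(2)\Rightarrow(1)$ is immediate from Lemma \ref{IArq}: if $\rk(\g)=1$ then $I=\pcv(\g)$ is a one-element set, so all non-zero elements generate the same principal convex subgroup $\hg=\g$, hence are arquimedeanly equivalent. The implication $(3)\Rightarrow(2)$ is also easy: a non-trivial subgroup of $\R$ has no proper non-zero convex subgroup, since for $\ga\neq 0$ in such a subgroup and any $\be$ there is $n\in\N$ with $|\be|\le n|\ga|$ (archimedean property of $\R$), forcing any non-zero convex subgroup to be everything; thus $\cv(\g)=\emptyset$ has order-type $1$, i.e. $\rk(\g)=1$.

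The heart of the argument is $(1)\Rightarrow(3)$. Assume $\g$ is arquimedean and non-trivial. Fix any $\ga_0\in\g$ with $\ga_0>0$. I would define a map $\lambda\colon\g\to\R$ by a Dedekind-cut / rational-approximation construction: for $\be\in\g$ with $\be\ge 0$, set
$$
\lambda(\be)=\sup\left\{\tfrac{m}{n}\in\Q_{\ge 0}\ \middle|\ m,n\in\N,\ m\,\ga_0\le n\,\be\right\},
$$
and extend to negative elements by $\lambda(-\be)=-\lambda(\be)$. Arquimedeanity guarantees this set is bounded above (there is $n$ with $\be<n\ga_0$, giving $\tfrac{m}{n}<1$ once $m\ge n$ appropriately) and non-empty, so $\lambda(\be)$ is a well-defined real number, with $\lambda(\ga_0)=1$. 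One then checks in turn: $\lambda$ is additive, by comparing the rational lower bounds for $\be_1$, $\be_2$ and $\be_1+\be_2$ and using that $\Q$ is dense and $\g$ torsion-free (so $m\ga_0\le n\be$ is unambiguous after clearing denominators); $\lambda$ is order-preserving in the weak sense $\be_1\le\be_2\Rightarrow\lambda(\be_1)\le\lambda(\be_2)$; and $\lambda$ is injective, because if $\be>0$ then picking $n$ with $\be\ge$ some fixed small multiple of $\ga_0$ — more precisely, arquimedeanity applied the other way gives $m$ with $\ga_0\le m\be$, hence $\lambda(\be)\ge 1/m>0$. Injectivity plus weak order-preservation forces $\lambda$ to strictly preserve order, so $\lambda\colon\g\hk\R$ is an embedding of ordered groups, giving (3).

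The main obstacle is the verification that $\lambda$ is a group homomorphism: the inequality $\lambda(\be_1+\be_2)\ge\lambda(\be_1)+\lambda(\be_2)$ follows by adding witnesses $m_1\ga_0\le n\be_1$ and $m_2\ga_0\le n\be_2$ (after putting them over a common denominator $n$) to get $(m_1+m_2)\ga_0\le n(\be_1+\be_2)$, but the reverse inequality requires an approximation-from-above argument, i.e. controlling $\{\tfrac{m}{n}\mid n(\be_1+\be_2)<m\ga_0\}$ and splitting $m$ as $m_1+m_2$ with $n\be_i<m_i\ga_0$ up to an error of one unit in the numerator, then letting $n\to\infty$. This is a standard but slightly delicate $\epsilon$-juggling with the ordering and the torsion-freeness of $\g$; everything else is routine. (Alternatively, one could invoke Lemma \ref{MInDiv} to pass to $\gq$ first and work with a divisible arquimedean group, which streamlines the denominators, but does not remove the core approximation step.)
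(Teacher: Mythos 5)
Your proposal is correct and follows essentially the same route as the paper: the substantive step $(1)\Rightarrow(3)$ is the same H\"older-type construction (mapping $\ga$ to the real number cut out by the rationals $m/n$ with $m\ga_0\le n\ga$), and the remaining implications are handled by the same easy observations (Lemma \ref{IArq} for the link between arquimedean classes and convex subgroups, and the archimedean property of $\R$ for the rest). The paper merely sketches the additivity of $\lambda$, so your more explicit treatment of that verification is a welcome, not a divergent, elaboration.
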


\begin{proof}
The equivalence between (1) and (2) follows from Lemmas \ref{IArq} and \ref{IConv}. 

Clearly, (3) implies (1). Finally, if $\g$ is arquimedian, the choice of any positive $\be\in \g$ determines a unique embedding $\g\hk\R$ such that $\be\mapsto 1$. Indeed, any $\ga\in\g$ is mapped to the real number determined by the sequence of rational numbers $m/n$ such that $m\be\le n\ga$. Thus, (1) implies (3).
\end{proof}



\noindent{\bf Definition. }An ordered group $\g$ is \emph{regular} if for all $i\in I=\pcv(\g)$, there exists a ring $\Z\subset A_i\subset \Q$ such that the component $C_i(\g)$ is free as an $A_i$-module.

\begin{theorem}[Hahn's theorem]
 Every regular ordered group $\g$ admits an immediate embedding to the Hahn product determined by the skeleton of $\g$.
\end{theorem}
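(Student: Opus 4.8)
The plan is to produce an order-embedding $\phi\colon\g\hra V$ into the Hahn product $V=\left(\prod_{i\in I}C_i\right)_{\lx}$ determined by the skeleton $\bigl(I,(C_i)_{i\in I}\bigr)$, arranged so that the \emph{leading coordinate} of $\phi(\ga)$ records the arquimedean class of $\ga$. For $\ga\ne0$ write $v(\ga)\in I$ for the index with $H(\ga)=H_{v(\ga)}$; I want $\phi$ to be a group homomorphism such that, for every $\ga\ne0$, the support of $\phi(\ga)$ has least element $v(\ga)$ and the $v(\ga)$-th coordinate of $\phi(\ga)$ equals the class $\ga+H_{v(\ga)}^*\in C_{v(\ga)}$. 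Any such $\phi$ is automatically the desired embedding: if $\ga>0$ its leading coordinate is positive, so $\phi(\ga)>0$ in the lexicographic order and $\phi$ is an order-preserving injection; and since the support of $\phi(\ga)$ begins exactly at $v(\ga)$, Lemma~\ref{rkEmbed} forces the induced map $\pcv(\g)\to\pcv(V)$ to be the identity of $I$ and the induced maps on the components to be the identities $C_i\to C_i$, so $\phi$ is immediate.

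To construct $\phi$ I would argue by transfinite exhaustion — say, Zorn's lemma on the poset of partial homomorphisms $\psi\colon\Delta\to V$, defined on a subgroup $\g\supseteq\Delta$ and satisfying the leading-coordinate condition; a chain of such data has the union as an upper bound, so there is a maximal $(\Delta,\psi)$, and the crux is to show $\Delta=\g$. If $\ga\in\g\setminus\Delta$, one extends $\psi$ across $\ga$ (enlarging $\Delta$ first if necessary). This is exactly where regularity is used: extending $\psi$ requires, for each arquimedean class $j$ freshly reached by $\ga$, lifting a prescribed element of $C_j=H_j/H_j^*$ to $H_j$ in a way compatible with $\psi$, i.e. splitting the relevant extension of $C_j$; the freeness of $C_j$ as a module over the subring $A_j\subseteq\Q$ supplied by regularity is precisely what kills the obstruction to such a splitting. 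Granting the splittings, the extension step and the verification that $\psi$ still lands in $V$ with the leading-coordinate property are routine. The base of the whole scheme is the rank-one (arquimedean) case, which is Proposition~\ref{rk1}.

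The step I expect to be the real obstacle is controlling the \emph{support}, i.e. guaranteeing that the coordinates of $\phi(\ga)$ vanish off a well-ordered subset of $I$; this does not come from choosing the liftings independently, so they must be organized coherently. The natural device is to fix, once and for all, a group section $s_i\colon C_i\to H_i$ of each canonical surjection (again via regularity) and to \emph{define} $\phi(\ga)$ by the ``successive remainder'' recursion: put $i_0=v(\ga)$, $c_0=\ga+H_{i_0}^*$, replace $\ga$ by $\ga-s_{i_0}(c_0)\in H_{i_0}^*$ — which has strictly larger arquimedean value — and iterate. The values $v(\ga)=i_0<i_1<i_2<\cdots$ produced form a strictly increasing transfinite sequence, hence a well-ordered subset of $I$, and this subset is exactly $\supp\phi(\ga)$; one then has to check that $\phi$ so defined is additive and independent of the bookkeeping. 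The genuinely delicate point is the behaviour at limit stages, where no element of $\g$ represents the ``remainder so far'' — resolving this is the technical heart of Hahn's theorem, and it is there that the coherent choice of sections $s_i$, hence the regularity hypothesis, is essential.
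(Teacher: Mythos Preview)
Your outline diverges from the paper's in an important organizational way, and the divergence is exactly where your acknowledged gap sits. The paper sketches the argument only in the divisible case $\g=\gq$ (the one it actually needs), and there the key device is not a transfinite recursion but \emph{Banaschewski's lemma} (Lemma~\ref{bski}): one chooses, simultaneously and coherently, a $\Q$-linear complement $\op{cmpl}(H)$ for \emph{every} convex subgroup $H$, subject to the monotonicity $H\subset H'\Rightarrow\op{cmpl}(H)\supset\op{cmpl}(H')$. This yields for each $i\in I$ a projection $\pi_i\colon\gq\twoheadrightarrow H_i\twoheadrightarrow Q_i$ (the first arrow coming from the chosen complement of $H_i$), and the embedding is simply $\varphi(\ga)=(\pi_i(\ga))_{i\in I}$, defined \emph{globally and all at once}. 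Well-orderedness of $\supp\varphi(\ga)$ and order-preservation are then verified directly; the paper defers those checks to \cite{Rib}.

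Your ``successive remainder'' scheme is the natural heuristic, but the limit-stage problem you flag is a genuine gap, not a technicality: at a limit ordinal $\lambda$ there is no element of $\g$ representing the accumulated remainder $\ga-\sum_{\alpha<\lambda}s_{i_\alpha}(c_\alpha)$, so the recursion cannot be continued inside $\g$. Choosing sections $s_i\colon C_i\to H_i$ one component at a time --- which is all regularity gives you --- does not by itself produce the coherence needed to make sense of that limit. Banaschewski's condition is precisely the upgrade: instead of splitting each $H_i\twoheadrightarrow C_i$ separately, one splits \emph{all} inclusions $H\hookrightarrow\g$ compatibly, and then the $i$-th coordinate of $\varphi(\ga)$ is read off directly from $\ga$ with no recursion and no limit step. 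So the paper's organization sidesteps the difficulty you identified rather than confronting it; as written, your proposal names the obstacle but does not remove it.
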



\subsection*{Functoriality of Hahn's embedding}

Let $\g$ be an arbitrary (not necessarily re\-gular) ordered group, with skeleton $\left(I;(C_i)_{i\in I}\right)$. The skeleton of $\gq$ is
$$
\left(I;(Q_i)_{i\in I}\right),\qquad Q_i=C_i\otimes_\Z\Q\ \mbox{ for all }i\in I.
$$

The canonical embedding $\g\hk \gq$ does not preserve the skeleton, but the embedding $\pcv(\g)\hk\pcv(\gq)$ described in Lemma \ref{rkEmbed}  is an isomorphism of ordered sets which we consider as a natural identification:
$$I=\pcv(\g)=\pcv(\gq).$$

By Hahn's theorem, there is a (non-canonical) immediate embedding 
$$
\gq\hooklongrightarrow\hq:=\left(\prod\nolimits_{i\in I}Q_i\right)_{\lx}.
$$

For each $i\in I$ we fix, once and for all, a positive element $1^i\in Q_i$. 
As shown in Proposition  \ref{rk1}, this choice determines an embedding $Q_i\hk\R$ of ordered groups, which sends our fixed element $1^i$ to the real number $1$.

We get an embedding $\hq \hk \rlex$ which obviously preserves the principal rank.

\begin{corollary}\label{embRlx}
The ordered group $\g$ admits an embedding $\g \hk \rlex$ which
induces a canonical identification $I=\pcv(\g)=\pcv(\rlex)$.
\end{corollary}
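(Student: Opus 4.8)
The plan is to chain together the two embeddings already constructed just before the statement, so the proof is essentially a bookkeeping argument combined with the functoriality discussion. First I would recall that, by Hahn's theorem applied to the divisible (hence regular) group $\gq$, there is an immediate embedding $\gq\hra\hq=\left(\prod_{i\in I}Q_i\right)_{\lx}$, where $I=\pcv(\g)=\pcv(\gq)$ under the natural identification coming from Lemma~\ref{rkEmbed}. Composing with the canonical embedding $\g\hra\gq$ gives an embedding $\g\hra\hq$; since $\g\hra\gq$ induces $\pcv(\g)=\pcv(\gq)$ (stated in the excerpt) and $\gq\hra\hq$ is immediate (so in particular induces $\pcv(\gq)=\pcv(\hq)$), the composite induces $I=\pcv(\g)=\pcv(\hq)$.

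Next I would produce the coordinatewise embedding $\hq\hra\rlex$. For each $i\in I$, the fixed positive element $1^i\in Q_i$ determines, via Proposition~\ref{rk1}, a unique order-embedding $Q_i\hra\R$ sending $1^i\mapsto 1$. Taking the product of these maps sends $\prod_{i\in I}Q_i$ to $\prod_{i\in I}\R=\R^I$, and since each factor map is injective and order-preserving, the resulting map is an embedding of ordered sets; moreover it carries well-ordered supports to well-ordered supports (the support can only shrink, as an element with $a_i\neq0$ maps to a coordinate which is again nonzero), so it restricts to an embedding $\hq\hra\rlex$ of ordered groups. I would then observe that this map preserves the principal rank: in the "prefixed skeleton" description recalled in the excerpt, $\pcv$ of a Hahn product is computed as the index set $I$ with its order, via the subgroups $H_i=\{(a_j)\mid a_j=0\text{ for }j<i\}$, and a coordinatewise embedding that is injective on each $C_i$ sends $H_i(\hq)$ into $H_i(\rlex)$ and induces the identity on $I$; hence $\pcv(\hq)=\pcv(\rlex)=I$ as ordered sets.

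Finally, I would compose: $\g\hra\hq\hra\rlex$ is an embedding of ordered groups, and stringing together the three identifications $I=\pcv(\g)=\pcv(\hq)=\pcv(\rlex)$ gives the asserted canonical identification $I=\pcv(\g)=\pcv(\rlex)$. The word "canonical" here refers to the fact that the identification on the level of index sets does not depend on the non-canonical choices (Hahn's embedding, the elements $1^i$): each of these choices affects only the embedding, not the induced map on principal convex subgroups, which is always the one described in Lemma~\ref{rkEmbed}.

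The only mildly delicate point — the place I would be most careful — is checking that the product map $\prod Q_i\to\R^I$ genuinely lands inside $\rlex$, i.e. that it respects the well-ordered-support condition, and that it is strictly order-preserving for the lexicographic order (the first differing coordinate of the image is the image of the first differing coordinate of the source, because each $Q_i\hra\R$ is injective). Everything else is routine combination of results already in the excerpt, so I would keep the write-up short and point to Proposition~\ref{rk1}, Lemma~\ref{rkEmbed}, and the prefixed-skeleton paragraph for the rank bookkeeping.
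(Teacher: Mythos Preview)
Your proposal is correct and follows exactly the route the paper takes: the corollary is stated in the paper without proof, as an immediate consequence of the chain $\g\hra\gq\hra\hq\hra\rlex$ built in the preceding paragraphs, together with the ``prefixed skeleton'' description of $\pcv$ for Hahn products. You have simply unpacked the word ``obviously'' in the line just before the corollary, and your care about the well-ordered-support and order-preservation checks for the coordinatewise map $\hq\hra\rlex$ is appropriate.
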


Let us analyze to what extent this embedding has a functorial behaviour.

Consider an embedding $\iota\colon\g\hk\La$ of ordered groups. 
Since $\La_\Q$ is divisible, Lemma \ref{MInDiv} shows that there exists a commutative diagram of embeddings 
\begin{equation}\label{divhull}
\as{1.2}
\begin{array}{ccc}
\La&\lra&\La_\Q\\
\uparrow&&\uparrow\\
\g&\lra&\gq
\end{array} 
\end{equation}

By Lemma \ref{rkEmbed}, $\iota$ induces an embedding
$$
I=\pcv(\gq)\stackrel{\iota}\hooklongrightarrow J=\pcv(\La)=\pcv(\La_\Q).
$$

Also, if the skeleton of $\La_\Q$ is $\left(J;(L_j)_{j\in J}\right)$, then, for each $i\in I$ there is a canonical embedding of abelian ordered groups
$Q_i \hk L_{\iota(i)}$.

In particular, $\iota$ induces a canonical embedding
$$
\H(\gq)\hooklongrightarrow\H(\La_\Q).
$$

For all $i\in I$, let $j=\iota(i)$, and denote by $1^j\in L_j$ the image of the positive element $1^i\in Q_i$ by the embedding $Q_i\hk L_j$. This choice  determines an embedding $L_j\hk\R$ such that the composition
$Q_i\hk L_j \hk \R$
is our fixed embedding $Q_i\hk\R$. 

Therefore, we have a commutative diagram of embeddings

\begin{equation}\label{hhnq}
\as{1.2}
\begin{array}{ccc}
\H(\La_\Q)&\lra&\R^J_{\lx}\\
\uparrow&&\uparrow\\
\hq&\lra&\rlex
\end{array} 
\end{equation}
The right-hand vertical mapping sends $(x_i)_{i\in I}\ \mapsto\ (y_j)_{j\in J}$, where
\begin{equation}\label{rirj}
y_j=
\begin{cases}
x_i,&\mbox{ if }j=\iota(i),\\
0,&\mbox{ if }j\not\in\iota(I).
\end{cases} 
\end{equation}

We would like to join the commutative diagrams (\ref{divhull}) and (\ref{hhnq}) into a commutative diagram of embeddings of ordered groups
\begin{equation}\label{embRlex}
\as{1.4}
\begin{array}{ccccccc}
\La&\lra&\La_\Q&\lra&\mathbb{H}(\La_\Q)&\lra&\R^J_{\lx}\\
\uparrow&&\uparrow&&\uparrow&&\uparrow\\
\g&\lra&\gq&\lra&\hq&\lra&\rlex
\end{array} 
\end{equation}

To this end,we need only to check that Hahn's immediate embeddings
$$
\gq\hooklongrightarrow\H(\gq),\qquad \La_\Q\hooklongrightarrow\H(\La_\Q),
$$
can be chosen in a compatible way, leading to a commutative diagram
\begin{equation}\label{choices}
\as{1.4}
\begin{array}{ccc}
\La_\Q&\hooklongrightarrow&\H(\La_\Q)\\
\uparrow&&\uparrow\\
\gq&\hooklongrightarrow&\hq 
\end{array} 
\end{equation}

To check this, we must review how the embedding $\gq\hk\H(\gq)$ is constructed.

The proof of Hahn's theorem for a divisible ordered group relies in the following result of B. Banaschewski, which makes use of Zorn's lemma.  

\begin{lemma}\label{bski}
 Let $V$ be a vector space over a field $K$. Consider a non-empty set $ \ss\subset\op{Subsp}(V)$ of subspaces of $V$.
 Then, there exists a mapping
$$
\op{cmpl}\colon \ss\lra\ \op{Subsp}(V)
$$
satisfying the following properties:
\begin{enumerate}
\item $\ V=W\oplus \op{cmpl}(W)$, \ for all $W\in\ss$.
\item $\ W\subset W'\;\imp\,\op{cmpl}(W)\supset\op{cmpl}(W')$, \ for all $\,W,W'\in\ss$. 
\end{enumerate}
\end{lemma}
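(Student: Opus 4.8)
The plan is to prove Lemma \ref{bski} by a Zorn's lemma argument applied to a poset of partial complement-functions, generalizing the standard proof that every subspace has a complement. First I would fix, once and for all, a total ordering $\preceq$ on $\ss$ refining the inclusion partial order is \emph{not} quite what we want; instead the natural objects are pairs $(\ss',\op{cmpl}')$ where $\ss'\subset\ss$ is a subset that is \emph{downward closed} under inclusion within $\ss$ (if $W\in\ss'$, $W'\in\ss$ and $W'\subset W$ then $W'\in\ss'$) and $\op{cmpl}'\colon\ss'\to\op{Subsp}(V)$ satisfies (1) and (2) restricted to $\ss'$. Order these pairs by extension: $(\ss',\op{cmpl}')\le(\ss'',\op{cmpl}'')$ iff $\ss'\subset\ss''$ and $\op{cmpl}''$ restricts to $\op{cmpl}'$ on $\ss'$. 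The empty pair is in the poset, so it is non-empty; a chain has an obvious upper bound given by the union of the domains and the union (i.e.\ the common extension) of the functions, which again satisfies (1) and (2) since both are conditions on finitely many — in fact on one or two — elements at a time. So Zorn applies and yields a maximal element $(\ss_0,\op{cmpl}_0)$.

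The heart of the argument is then to show $\ss_0=\ss$. Suppose not, and pick $W^\ast\in\ss\setminus\ss_0$ that is \emph{minimal} with this property among elements of $\ss\setminus\ss_0$ with respect to inclusion — here one must be a little careful, since $\ss$ need not be well-ordered by inclusion, but convexity of subspaces is not assumed either; what one really uses is that it suffices to add \emph{one} well-chosen element. Actually the clean way: since $\ss_0$ is downward closed in $\ss$, the set $\ss\setminus\ss_0$ is upward closed, and we only need to produce \emph{some} $W^\ast\in\ss\setminus\ss_0$ such that $\{W\in\ss_0\mid W\subset W^\ast\}$ and $\{W\in\ss_0\mid W\supset W^\ast\}$ have, respectively, a "largest relevant constraint from below" and we can interpolate. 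Concretely, set
$$
U^+=\bigcap_{W\in\ss_0,\,W\supset W^\ast}\op{cmpl}_0(W),\qquad
U^-=\sum_{W\in\ss_0,\,W\subset W^\ast}\op{cmpl}_0(W),
$$
and look for a subspace $C$ with $V=W^\ast\oplus C$, $C\subset U^+$ (to preserve (2) against larger subspaces), and $C\supset U^-\cap$ (something) (to preserve (2) against smaller subspaces). The key linear-algebra fact needed is: given $W^\ast\subset U^+$ with $W^\ast\oplus(\text{something})=$ pieces fitting, a complement of $W^\ast$ inside $U^+$ sandwiched above a prescribed subspace exists provided that prescribed subspace meets $W^\ast$ trivially and sits inside $U^+$. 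I would verify that $U^-\cap U^+$ (or rather $U^-$ itself, once one checks $U^-\subset U^+$ using property (2) for the pair $(\ss_0,\op{cmpl}_0)$) intersects $W^\ast$ trivially: indeed for $W\subset W^\ast$ in $\ss_0$ we have $\op{cmpl}_0(W)\cap W=0$ and $W^\ast$... this is the delicate point.

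The main obstacle, and the step I expect to require the most care, is exactly this interpolation: showing that one can choose $\op{cmpl}_0(W^\ast):=C$ a complement of $W^\ast$ in $V$ with
$$
U^-\ \subset\ C\ \subset\ U^+,
$$
so that adjoining $W^\ast\mapsto C$ to $(\ss_0,\op{cmpl}_0)$ still satisfies (1) and (2), contradicting maximality. For (2) against a larger $W\supset W^\ast$ in $\ss_0$ one needs $C\subset\op{cmpl}_0(W)$, which follows from $C\subset U^+$; for (2) against a smaller $W\subset W^\ast$ one needs $C\supset\op{cmpl}_0(W)$, which follows from $C\supset U^-$. Thus everything reduces to the purely linear-algebraic claim that a subspace $W^\ast$ admits a complement $C$ with $U^-\subset C\subset U^+$ whenever $U^-\subset U^+$, $W^\ast\subset U^+$, $W^\ast\cap U^-=0$, and $W^\ast+U^-\subset U^+$ — which one proves by first taking a complement of $W^\ast\oplus U^-$ inside $U^+$, then a complement of $U^+$ inside $V$, and setting $C=U^-\oplus(\text{complement of }W^\ast\oplus U^- \text{ in } U^+)\oplus(\text{complement of } U^+ \text{ in } V)$; checking $V=W^\ast\oplus C$ is then a routine dimension-free direct-sum verification. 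I would also double-check the compatibility inequality $U^-\subset U^+$ and $W^\ast\cap U^-=0$ genuinely follow from the inductive hypotheses on $(\ss_0,\op{cmpl}_0)$ before declaring victory.
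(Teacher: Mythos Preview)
The paper does not prove this lemma; it simply quotes it as a result of Banaschewski and remarks that the proof ``makes use of Zorn's lemma''. So there is no argument in the paper to compare yours against. Your proposed strategy, however, has a genuine gap: a maximal element of your poset of partial complement-functions need not have domain all of $\ss$.

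Concretely, take $V=K^3$, set $W^\ast=\op{span}(e_1,e_2)$, and let $\ss=\{W_0,W_1,W_2,W^\ast\}$ with $W_k=K(e_1+k\,e_2)$ for $k=0,1,2$. The three lines are pairwise incomparable, so $\ss_0=\{W_0,W_1,W_2\}$ is downward closed in $\ss$ and condition (2) is vacuous on it. Define $\op{cmpl}_0(W_0)=\{x_1=0\}$, $\op{cmpl}_0(W_1)=\{x_2=0\}$, $\op{cmpl}_0(W_2)=\{x_1+x_3=0\}$; each is a legitimate complement. Now observe that (2) is \emph{antitone}, so for $W\subset W^\ast$ the constraint on an extension is $\op{cmpl}(W^\ast)\subset\op{cmpl}_0(W)$, not $\supset$ --- your inclusions are systematically reversed, and with them your $U^{\pm}$ play swapped roles. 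With the correct direction, any extension to $W^\ast$ must satisfy $\op{cmpl}(W^\ast)\subset\bigcap_k\op{cmpl}_0(W_k)=\{0\}$, which cannot complement a plane. Hence $(\ss_0,\op{cmpl}_0)$ is maximal in your poset although $\ss_0\ne\ss$. The lemma is of course true for this $\ss$ (take all three line-complements equal to $\op{span}(e_2,e_3)$ and $\op{cmpl}(W^\ast)=Ke_3$); the point is that Zorn cannot revise incompatible early choices, so ``maximal'' does not imply ``total''.

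Banaschewski's argument avoids this by a different use of choice. One clean version: well-order a basis $(v_\la)_{\la}$ of $V$ and set, uniformly for every $W\in\ss$,
$$
\op{cmpl}(W)=\op{span}\bigl\{\,v_\la\ \big|\ v_\la\notin W+\op{span}\{v_\mu:\mu<\la\}\,\bigr\}.
$$
Transfinite induction on $\la$ gives $W+\op{cmpl}(W)=V$; directness follows by looking at the largest index in a hypothetical nonzero element of $W\cap\op{cmpl}(W)$; and (2) is immediate because enlarging $W$ only shrinks the defining index set.
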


By Banaschewski's lemma, we may choose complementary $\Q$-subspaces:
$$
\gq=H\oplus\op{cmpl}(H),\quad\mbox{for all }H\in \cv(\gq), 
$$
with a coherent behaviour with respect to inclusions, as indicated in condition (2). 

In particular, for all $i\in I$ we have projections:
$$
\pi_i\colon \gq\longtwoheadrightarrow H_i\longtwoheadrightarrow Q_i. 
$$
The projection $\g\twoheadrightarrow H_i$ depends on the choice of the complementary subspace of $H_i$. The projection $H_i\twoheadrightarrow Q_i=H_i/H_i^*$ is the canonical quotient mapping.

For simplicity, let us denote $\pi_i(\ga)=\ga_i\in Q_i$ for all $\ga\in\gq$, $i\in I$. 

In this way, we obtain a group homomorphism:
$$
\varphi\colon\gq\lra \prod_{i\in I}Q_i,\qquad \ga\ \longmapsto\ \left(\ga_i\right)_{i\in I},
$$
which is injective. In fact, if $\ga\in\gq$ is non-zero, the principal subgroup $H(\ga)$ generated by $\ga$ is non-zero too; thus,  $H(\ga)=H_i$ for some $i\in I$. The element $\ga_i\in Q_i$ is the class of $\ga$ modulo $H_i^*$.
Since $\ga$ generates $H_i$, we have  $\ga_i\ne0$.

The proof of Hahn's theorem ends by checking that $\varphi(\gq)\subset\H(\gq)$ and $\varphi$ preserves the ordering \cite[Sec. A]{Rib}.\e

Now, let us go back to diagram (\ref{choices}). The commutativity of the diagram is equivalent to the commutativity of the following diagram, for all $i\in I$:
$$
\as{1.4}
\begin{array}{ccccc}
\La_\Q&\stackrel{\pi_j(\La)}\hooklongrightarrow&H_j&\lra& L_j\\
\uparrow&&\uparrow&&\uparrow\\
\gq&\stackrel{\pi_i(\g)}\hooklongrightarrow&H_i&\lra&Q_i
\end{array} 
$$
where $\iota(i)=j\in J$, and $H_j$ is the convex subgroup of $\La_\Q$ generated by $\iota(H_i)$.

The right-hand diagram commutes because the vertical mappings are induced by $\iota$ and the horizontal mappings are canonical.

Thus, we need only to show that the left-hand diagram commutes. This amounts to choose the complementary subspaces of convex sugbroups 
in $\gq$ and $\La_\Q$ so that 
\begin{equation}\label{ok}
\iota(i)=j\ \imp\ \iota\left(\op{cmpl}_\g(H_i)\right)\subset \op{cmpl}_\La(H_j). 
\end{equation}

This is always possible. For instance, we may first apply Banaschewski's lemma to the set $\ss_\g=\cv(\gq)\subset\op{Subsp}(\gq)$ to consider coherent choices $\op{cmpl}_\g(\Delta)$ of complementary subspaces of all convex subgroups $\Delta\in\cv(\gq)$. Then, for all $H\in \cv(\La_\Q)$ we may consider the subspace
$$H\oplus H'\subset \La_\Q,\qquad H'=\iota\left(\op{cmpl}_\g(\Delta)\right),$$
where $\Delta$ is the maximal convex subgroup of $\gq$ such that $\iota(\Delta)\subset H$. 

Finally, we may apply  Banaschewski's lemma to the set $$\ss_\La=\left\{H\oplus H'\mid H\in \cv(\La_\Q)\right\}\subset \op{Subsp}(\La_\Q)$$ and choose complementary subspaces 
$$
\La_\Q=H\oplus H'\oplus\op{cmpl}(H\oplus H'),
$$
satisfying condition (2) of Lemma \ref{bski}. It is easy to check that the complementary subspaces $\op{cmpl}_{\La}(H):=H'\oplus\op{cmpl}(H\oplus H')$ satisfy condition (2) as well. These complementary subspaces $\op{cmpl}_{\La}(H)$ obviously satisfy (\ref{ok}).

Summing up, we have seen the functoriality of Hahn's construction.

\begin{lemma}\label{compatible}
Let $\g\hk\La$ be an embedding of ordered groups. Then, there exist choices of Hahn's immediate embeddings
$$
\gq\hooklongrightarrow\H(\gq),\qquad \La_\Q\hooklongrightarrow\H(\La_\Q),
$$
making diagram (\ref{embRlex}) commutative. 

\end{lemma}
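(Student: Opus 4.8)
The plan is to reduce the commutativity of the whole diagram (\ref{embRlex}) to a single compatibility condition on the choices of complementary subspaces entering Hahn's embedding, and then to produce such choices by two applications of Banaschewski's Lemma~\ref{bski}. First I would split (\ref{embRlex}) into its three component squares. The leftmost square is (\ref{divhull}) and commutes by the uniqueness clause of Lemma~\ref{MInDiv}; the rightmost square is (\ref{hhnq}), whose commutativity is built into the choice of the positive elements $1^j\in L_j$ as images of the $1^i\in Q_i$, together with the explicit formula (\ref{rirj}). So everything comes down to the middle square, which is precisely (\ref{choices}): one must choose Hahn's immediate embeddings $\gq\hk\H(\gq)$ and $\La_\Q\hk\H(\La_\Q)$ compatibly with $\iota$ and with the induced embedding $\H(\gq)\hk\H(\La_\Q)$.

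Next I would recall that each Hahn embedding is assembled coordinatewise from the projections $\pi_i\colon\gq\twoheadrightarrow H_i\twoheadrightarrow Q_i$, in which the first arrow depends on the choice of a complement $\op{cmpl}_\g(H_i)$ of $H_i$ and the second is the canonical quotient. Reading off the $i$-th coordinate (with $j=\iota(i)$), the square (\ref{choices}) commutes as soon as the small square relating $\pi_i$ on $\gq$ and $\pi_j$ on $\La_\Q$ commutes; its right half commutes by naturality of the quotient maps $H_i\to Q_i$ and $H_j\to L_j$ under $\iota$, so one is left with the single condition (\ref{ok}): $\iota(\op{cmpl}_\g(H_i))\subset\op{cmpl}_\La(H_j)$ whenever $j=\iota(i)$.

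Finally I would construct the complements. Apply Lemma~\ref{bski} to $\ss_\g=\cv(\gq)$ to obtain a coherent family $\op{cmpl}_\g(\Delta)$ indexed by $\Delta\in\cv(\gq)$. For each $H\in\cv(\La_\Q)$ let $\Delta_H$ be the largest convex subgroup of $\gq$ with $\iota(\Delta_H)\subset H$, set $H'=\iota(\op{cmpl}_\g(\Delta_H))$ (one checks $H\cap H'=0$, so $H\oplus H'$ is a genuine direct summand of $\La_\Q$), apply Lemma~\ref{bski} a second time to $\ss_\La=\{H\oplus H'\mid H\in\cv(\La_\Q)\}$, and put $\op{cmpl}_\La(H):=H'\oplus\op{cmpl}(H\oplus H')$. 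Condition (\ref{ok}) then falls out: when $j=\iota(i)$, the principal convex subgroup $H_j\subset\La_\Q$ generated by $\iota(H_i)$ satisfies $\Delta_{H_j}=H_i$ exactly --- no convex subgroup of $\gq$ strictly larger than $H_i$ can map into $H_j$, since an element $\be\notin H_i$ is arquimedeanly larger than any generator $\ga$ of $H_i$, whence $|\iota(\be)|>n|\iota(\ga)|$ for all $n$ and $\iota(\be)\notin H_j$ --- so $H_j'=\iota(\op{cmpl}_\g(H_i))$ is literally a direct summand of $\op{cmpl}_\La(H_j)$.

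The step I expect to be the main obstacle is the bookkeeping needed to see that the two-tier family $\op{cmpl}_\La$ still satisfies the coherence condition (2) of Lemma~\ref{bski}. The point is that $H_1\subset H_2$ forces $\Delta_{H_1}\subset\Delta_{H_2}$ (maximality), hence $H_1'\supset H_2'$, and moreover $H_1\oplus H_1'\subset H_2\oplus H_2'$: an element of $\op{cmpl}_\g(\Delta_{H_1})$ decomposes along $\gq=\Delta_{H_2}\oplus\op{cmpl}_\g(\Delta_{H_2})$, and pushing this decomposition through $\iota$ lands it in $H_2\oplus H_2'$. Coherence of $\op{cmpl}$ on $\ss_\La$ then upgrades to coherence of $\op{cmpl}_\La$, and together with condition (1) of Lemma~\ref{bski} for $\ss_\La$ this confirms that the $\op{cmpl}_\La(H)$ form a legitimate coherent choice of complements; the remaining verification of (\ref{embRlex}) is then purely formal.
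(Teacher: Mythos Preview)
Your proposal is correct and follows essentially the same route as the paper: reduce (\ref{embRlex}) to the middle square (\ref{choices}), then to condition (\ref{ok}), and realize (\ref{ok}) by a two-stage application of Banaschewski's Lemma with the definition $\op{cmpl}_\La(H)=H'\oplus\op{cmpl}(H\oplus H')$. In fact you supply more detail than the paper does --- the verifications that $H\cap H'=0$, that $\Delta_{H_j}=H_i$ exactly, and that the two-tier family $\op{cmpl}_\La$ inherits coherence are all left implicit or declared ``easy to check'' in the original.
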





We end this section with an auxiliary result which follows immediately from the above description of Hahn's embedding $\gq\hk\hq$.

\begin{lemma}\label{hahn}
For all $i\in I$, $q\in Q_i$, there exists an element $b_{i,q}\in\gq$ whose image in $\hq$ is of the form:
$$
b_{i,q}=(\cdots0\,0\, q \star\star\cdots).
$$ 
That is, $b_{i,q}=(b_j)_{j\in I}$, with $b_i=q$ and $b_j=0$ for all $j<i$.
\end{lemma}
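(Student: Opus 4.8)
The plan is to unwind the construction of the Hahn embedding $\varphi\colon\gq\hk\hq$ recalled just above, and locate an element of $\gq$ with the prescribed first nonzero coordinate. Recall that the embedding is built from a coherent family of complementary subspaces $\op{cmpl}(H)$, for $H\in\cv(\gq)$, chosen via Banaschewski's lemma, giving projections $\pi_i\colon\gq\twoheadrightarrow H_i\twoheadrightarrow Q_i$, and that $\varphi(\ga)=(\ga_i)_{i\in I}$ with $\ga_i=\pi_i(\ga)$. Fix $i\in I$ and $q\in Q_i=H_i/H_i^*$. The key point is that the projection $H_i\twoheadrightarrow Q_i$ is the canonical quotient map, so it is surjective: choose any $c\in H_i$ whose class modulo $H_i^*$ equals $q$.

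The first step is to check that $b_{i,q}:=c$ already has the desired shape, i.e.\ that $\pi_j(c)=0$ for all $j<i$. By definition of the indexing (see the discussion after Lemma \ref{allH} and the table), $j<i$ means $H_j\supsetneq H_i$, equivalently $i\in S_{H_j}$ in the notation of Lemma \ref{IConv}; more to the point, $H_i\subset H_j^*$, since $H_j^*$ is the union of all principal convex subgroups strictly contained in $H_j$, and $H_i$ is one of them (strictly, as $j<i$). Now $\pi_j$ factors as $\gq\twoheadrightarrow H_j\twoheadrightarrow H_j/H_j^*=Q_j$, and $c\in H_i\subset H_j$, so the first projection fixes $c$ (the complement $\op{cmpl}(H_j)$ contributes nothing to an element already in $H_j$); then $c\in H_i\subset H_j^*$ is killed by the quotient map $H_j\to Q_j$. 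Hence $\pi_j(c)=0$ for every $j<i$.

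The second step is simply to record that $\pi_i(c)=q$ by the choice of $c$: the projection $\gq\twoheadrightarrow H_i$ fixes $c$ since $c\in H_i$, and the quotient map $H_i\to H_i/H_i^*=Q_i$ sends $c$ to its class $q$. Thus $\varphi(c)=(b_j)_{j\in I}$ with $b_j=0$ for $j<i$ and $b_i=q$, which is exactly the asserted form; the entries $b_j$ for $j>i$ are unconstrained (``$\star$''). Therefore $b_{i,q}=c\in\gq$ has image of the required shape in $\hq$.

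The argument is essentially bookkeeping about the convex-subgroup filtration, and no real obstacle arises; the only slightly delicate point is the identity $H_i\subset H_j^*$ for $j<i$, which one must extract from the definition of $H_j^*$ as the union of the proper principal convex subgroups of $H_j$ together with the fact (Lemma \ref{IArq}, or the bijection between $I$ and arquimedean classes) that $H_i$ is principal and properly contained in $H_j$ precisely when $j<i$. Everything else is the definition of $\varphi$.
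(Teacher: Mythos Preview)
Your proof is correct and is precisely the argument the paper has in mind: the paper does not spell out a proof but simply remarks that the lemma ``follows immediately from the above description of Hahn's embedding $\gq\hk\hq$'', and your unwinding---pick any $c\in H_i$ lifting $q\in Q_i=H_i/H_i^*$, then observe that for $j<i$ one has $c\in H_i\subset H_j^*$, so $\pi_j(c)=0$---is exactly that immediate argument made explicit.
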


\section{Small extensions of ordered groups}\label{secSmall}



The \emph{rational rank} of an abelian group $G$ is the cardinality of any maximal subset of $\Z$-linearly independent elements in $G$:
 $$\rrk(G)=\dim_\Q(G\otimes_\Z\Q).$$ 
 
Thus, $\rrk(G)=0$ if and only if $G$ is a torsion group.\e

An extension of ordered groups $\g\hk\La$ is \emph{commensurable} if $\rrk(\La/\g)=0$.\e

The extension $\g\hk\gq$ is simultaneously the minimal divisible extension of $\g$ and the maximal commensurable extension of $\g$.

\begin{lemma}\label{MaxComm}
For any commensurable extension $\g\hk \La$, there exists a unique embedding of $\La$ into $\gq$ such that  the composition $\g\hk \La\hk\gq$ is the canonical embedding.
\end{lemma}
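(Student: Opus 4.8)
The plan is to mimic the argument that shows $\gq$ is the minimal divisible extension (Lemma \ref{MInDiv}), but now exploiting commensurability in place of divisibility. So suppose $\g\hk\La$ is commensurable, meaning $\rrk(\La/\g)=0$, i.e. $\La/\g$ is a torsion group. First I would note that this is exactly the condition $\La=\La^\com$ relative to $\g$, in the terminology of the introduction: every $\ga\in\La$ satisfies $n\ga\in\g$ for some $n\in\Z_{>0}$. In particular every element of $\La$ is commensurable over $\g$, so $\La$ sits inside any divisible extension of $\g$ in a forced way; the point is to pin it down inside the specific divisible hull $\gq$.

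Next I would construct the embedding explicitly. Fix the canonical inclusion $\g\hk\gq$. Define a map $\psi\colon\La\to\gq$ by $\psi(\ga)=(1/n)\cdot\iota(n\ga)$, where $n\in\Z_{>0}$ is any integer with $n\ga\in\g$ and $\iota\colon\g\hk\gq$ is the canonical embedding (here $(1/m)\delta$ denotes $\delta\otimes(1/m)$ for $\delta\in\g$, which makes sense in the divisible group $\gq$). The first key step is well-definedness: if $n\ga\in\g$ and $m\ga\in\g$, then $mn\ga\in\g$ and $(1/n)\iota(n\ga)=(1/mn)\iota(mn\ga)=(1/m)\iota(m\ga)$ because $\gq$ is torsion-free, so dividing by $mn$ is unambiguous. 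The second step is that $\psi$ is a group homomorphism: given $\ga,\be\in\La$ with $n\ga,m\be\in\g$, we have $nm(\ga+\be)\in\g$, and $\psi(\ga+\be)=(1/nm)\iota(nm(\ga+\be))=(1/nm)\iota(nm\ga)+(1/nm)\iota(nm\be)=\psi(\ga)+\psi(\be)$. The third step is that $\psi$ extends $\iota$: for $\ga\in\g$ take $n=1$. The fourth step is that $\psi$ is order-preserving, hence injective: $\ga>0$ in $\La$ with $n\ga\in\g$, $n>0$, forces $n\ga>0$ in $\g$ (an ordered group is torsion-free and positivity scales), hence $\iota(n\ga)>0$ in $\gq$, hence $(1/n)\iota(n\ga)>0$ by the description of the order on $\gq$ via $\delta\otimes(1/n)<\delta'\otimes(1/m)\iff m\delta<n\delta'$. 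An order-preserving group homomorphism of ordered groups is automatically strictly order-preserving (its kernel is trivial since $\g$, $\La$ are torsion-free and any nonzero element has nonzero multiple still nonzero), so $\psi$ is an embedding of ordered groups.

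For uniqueness, suppose $\psi'\colon\La\hk\gq$ is another embedding of ordered groups restricting to $\iota$ on $\g$. For any $\ga\in\La$ pick $n>0$ with $n\ga\in\g$; then $n\psi'(\ga)=\psi'(n\ga)=\iota(n\ga)=n\psi(\ga)$ in $\gq$, and since $\gq$ is torsion-free we may cancel the $n$ to get $\psi'(\ga)=\psi(\ga)$. This finishes the argument. I do not anticipate a genuine obstacle here — the proof is a direct analogue of Lemma \ref{MInDiv} — but the one point deserving care is the interplay of the order: one must check that the ordering produced on the image of $\psi$ agrees with the ordering of $\gq$ rather than merely that $\psi$ is an abstract group embedding into the abstract group $\gq$. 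This is handled by the fourth step above, using the explicit description of the order on the divisible hull given in the excerpt, together with the elementary fact that in a torsion-free ordered group $\ga>0\iff n\ga>0$ for $n\in\Z_{>0}$.
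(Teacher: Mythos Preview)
Your proof is correct. The paper states Lemma~\ref{MaxComm} without proof, treating it as a standard fact parallel to Lemma~\ref{MInDiv}; the argument you give is exactly the natural one and there is nothing to compare against.
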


Two extensions $\ \g\hk \La$, $\ \g\hk \La'$ \ 
are said to be \emph{equivalent} if there is an isomorphism $\ \La\ism\La'$ \ of ordered groups fitting into a commmutative diagram:
$$
\as{1.2}
\begin{array}{ccc}
\La&&\\
\uparrow&\searrow&\\
\g&\lra&\La'
\end{array}
$$

By Lemma \ref{MaxComm}, every commensurable extension of $\g$ is equivalent to a unique subgroup of $\gq$.

\subsection{Small extensions}\label{subsecSmall}
For an arbitrary extension $\iota\colon \g\hk \La$, we denote by $$\g \hk \g^\com\subset \La$$ the maximal commensurable extension of $\g$ in $\La$; that is,
$$
\g^\com=\left\{\xi\in \La\mid n\xi\in\iota(\g),\ \mbox{for some}\ n\in\N\right\}.
$$

\noindent{\bf Definition.} We say that $\g\hk \La$ is a \emph{small extension} if $\La/\g^\com$ is a cyclic group.\e

Therefore, a small extension is either commensurable (if $\g^\com=\La$), or it has $\rrk(\La/\g)=1$ and the quotient  $\La/\g^\com$ is isomorphic to $\Z$.\e

This definition is motivated by the following result, which follows from the work by MacLane and Vaqui\'e.

\begin{theorem}\cite[Cor. 4.6]{MLV}\label{AllSmall}
Let $K$ be a field and let $\mu\colon \kx\twoheadrightarrow\gm\infty$ be a valuation on the polynomial ring $\kx$. Let $\g=\mu(K^*)$ be the value group  
of the restriction of $\mu$ to $K$. Then, $\g\subset\gm$ is a small extension of ordered groups.
\end{theorem}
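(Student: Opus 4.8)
The plan is to separate two cases according to the support $\p=\mu^{-1}(\infty)\in\op{Spec}(K[x])$, which is either a maximal ideal or $(0)$.

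Suppose first that $\p\ne(0)$, so that $\p=(p)$ for an irreducible $p\in K[x]$ and $\kappa(\p)=K[x]/\p$ is a field, finite over $K$. The value group $\gm$ coincides with the value group of the valuation induced by $\mu$ on $\kappa(\p)$, which extends $v$; since along a finite field extension the value group of an extended valuation has finite index over the value group below, $\gm/\g$ is finite. Hence $\g^\com=\gm$ and $\g\hk\gm$ is commensurable, in particular small.

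Now suppose $\p=(0)$, so that $\mu$ extends to a valuation on $K(x)$, of transcendence degree $1$ over $K$. The first observation is that Abhyankar's inequality gives $\rrk(\gm/\g)\le\op{trdeg}(K(x)/K)=1$ (and in transcendence degree $1$ this bound is elementary); since $\g^\com/\g$ is the torsion subgroup of $\gm/\g$, the torsion-free group $\gm/\g^\com$ then embeds into $\Q$. So it only remains to show that $\gm/\g^\com$ is cyclic, that is, generated by a single element. For this I would invoke the structure theory of valuations on $K[x]$: by the results of MacLane and Vaqui\'e, $\mu$ admits a MacLane--Vaqui\'e chain of augmented valuations $\mu_0\to\mu_1\to\cdots$ (finite or transfinite), with $\mu_0$ extending $v$, with $\gm$ the union of the value groups $\g_{\mu_i}$ of the valuations in the chain, and with $\g_{\mu_{i+1}}=\g_{\mu_i}+\Z\gamma_i$ where $\gamma_i=\mu_{i+1}(\phi_i)$ is the value of the $i$-th key polynomial. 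Along this chain every \emph{ordinary} augmentation has finite relative ramification index $[\g_{\mu_{i+1}}:\g_{\mu_i}]$, so the only possibly non-commensurable contributions to $\gm$ come from $\g_{\mu_0}$ or from the \emph{limit} augmentations; combined with the bound $\rrk(\gm/\g)\le 1$, at most one such contribution can be non-commensurable over $\g^\com$, and after absorbing all the commensurable contributions into $\g^\com$ one is left with $\gm=\g^\com+\Z\gamma$ for a single $\gamma$. Therefore $\gm/\g^\com$ is cyclic and $\g\hk\gm$ is small.

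The real work lies entirely in the zero-support case, and the hard part is not the rational-rank bound but the control of $\mu(f)$ for an arbitrary $f\in K[x]$ through its $\phi$-adic expansions --- where cancellation may force $\mu(f)$ strictly above the minimum of the terms --- together with the organization of the (a priori transfinite) chain of augmentations needed to isolate the unique non-commensurable jump in the value group. This control is precisely what the theory of key polynomials and MacLane--Vaqui\'e chains supplies, and it is the substance of the cited reference; granted it, the conclusion follows by the bookkeeping indicated above.
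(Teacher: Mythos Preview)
The paper does not supply its own proof of this theorem; it simply records the statement and cites \cite[Cor.~4.6]{MLV}. So there is no argument here to compare against, beyond the fact that the cited reference is precisely the MacLane--Vaqui\'e machinery you invoke. In that sense your approach and the paper's coincide.

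Your treatment of the non-trivial-support case is correct, and in the zero-support case your reduction via the Abhyankar inequality to the assertion that $\gm/\g^{\com}$ is cyclic (rather than an arbitrary subgroup of $\Q$) is exactly the right framing.

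One assertion in your sketch is inaccurate, however. It is \emph{not} true that every ordinary augmentation has finite relative ramification index: an ordinary augmentation $\mu_{i+1}=[\mu_i;\,\phi_i\mapsto\gamma_i]$ with $\gamma_i$ incommensurable over $\g_{\mu_i}$ has $[\g_{\mu_{i+1}}:\g_{\mu_i}]=\infty$. The structural fact that \cite{MLV} actually establishes is that every \emph{non-terminal} valuation $\mu_i$ in a MacLane--Vaqui\'e chain for $\mu$ (one that admits a further augmentation toward $\mu$) is residually transcendental, and hence by the Abhyankar inequality satisfies $\g_{\mu_i}\subset\gq$. Consequently only the \emph{final} step of the chain can introduce an incommensurable value, and one obtains $\gm=\langle\Delta,\gamma_r\rangle$ with $\Delta\subset\gq$ and a single $\gamma_r$, whence $\gm/\g^{\com}$ is cyclic. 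Your attribution of the possible incommensurable jump to ``$\mu_0$ or the limit augmentations'' therefore mislocates it: the dichotomy is terminal versus non-terminal, not ordinary versus limit (and $\mu_0$ is just as likely as any other step to be commensurable). With this correction your sketch becomes the argument of the cited source.
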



 

Let us exhibit a few examples of small and non-small extensions. Consider the following four extensions of $\g=\Z$:\e

\as{1.3}
\begin{tabular}{ll}
(a)\quad $\Z\subset \gen{1,\root3\of2}_\Z$,& \qquad(b)\quad $\Z\subset\Z[\root3\of2]$,\\
(c)\quad $\Z\hk \Q\times\Z,\quad m\mapsto(m,0)$,&\qquad(d)\quad $\Z\hk \Q\times\Z,\quad m\mapsto(0,m)$.
\end{tabular}\e
\as{1}

The extensions (a) and (b) preserve the rank, while (c) and (d) increase the rank by one. On the other hand, only (a) and (c) are small.

Clearly, small extensions increase the rank at most by one. Let us discuss in more detail this property.



By Lemma \ref{rkEmbed}, any extension $\g\hk\La$ induces two embeddings of ordered sets
$$
\cv(\g)\hra\cv(\La),\qquad \pcv(\g)\hra\pcv(\La).
$$
The following well-known inequality is an easy consequence of Hahn's theorem:
\begin{equation}\label{rrrk}
\rrk(\La/\g)\ge \sharp \left(\pcv(\La)\setminus\pcv(\g)\right),
\end{equation}
where we identify $\pcv(\g)$ with its image in $\pcv(\La)$ under the above embedding.

Lemmas \ref{ConvI} and \ref{IConv} describe how the sets $\cv(\g)$ and $\pcv(\g)$ determine one to each other. From this relationship it is easy to deduce that
$$\sharp \left(\pcv(\La)\setminus\pcv(\g)\right)=0\sii \sharp \left(\cv(\La)\setminus\cv(\g)\right)=0$$
$$\sharp \left(\pcv(\La)\setminus\pcv(\g)\right)=1\sii \sharp \left(\cv(\La)\setminus\cv(\g)\right)=1$$\vskip0.3cm

\noindent{\bf Definition.} We say that the extension $\ \g\hk\La\ $ \emph{increases the rank at most by one} if $$\sharp \left(\pcv(\La)\setminus\pcv(\g)\right)\le1.$$

If $\sharp \left(\pcv(\La)\setminus\pcv(\g)\right)=0$ we say that $\g\hk\La$ \emph{preserves the rank}.

If $\sharp \left(\pcv(\La)\setminus\pcv(\g)\right)=1$ we say that $\g\hk\La$ \emph{increases the rank by one}.
\vskip0.3cm

For instance, it follows from (\ref{rrrk}) that the extension $\g\hk\gq$ preserves the rank.\e

\noindent{\bf Caution!}  If $\rk(\g)$ is finite, there is no ambiguity in these concepts. However, if $\rk(\g)$ is infinite, this terminology abuses of language. 
If $\g\hk\La$ preserves the rank, then obviously $\rk(\g)=\rk(\La)$, but the converse is not true.

For instance, $\N_0=\{0\}+\N$ is isomorphic to $\N$ as an ordered set; hence, the ordered groups $\R^{\N}_{\lx}$ and  $\R^{\N_0}_{\lx}$ have the same rank. However, the natural embedding $\R^{\N}_{\lx}\hk\R^{\N_0}_{\lx}$ increases the rank by one.\e

\begin{lemma}\label{small<=1}
Every small extension $\g\hk\La$ increases the rank at most by one 
\end{lemma}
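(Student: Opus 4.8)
The plan is to reduce to the inequality (\ref{rrrk}) combined with the defining property of a small extension. Given a small extension $\g\hk\La$, by definition the quotient $\La/\g^\com$ is cyclic, hence $\rrk(\La/\g^\com)\le 1$. Since $\g\hk\g^\com$ is commensurable, $\rrk(\g^\com/\g)=0$, and from the short exact sequence $0\to\g^\com/\g\to\La/\g\to\La/\g^\com\to 0$ (of abelian groups), tensoring with $\Q$ gives $\rrk(\La/\g)=\rrk(\g^\com/\g)+\rrk(\La/\g^\com)\le 1$. So every small extension satisfies $\rrk(\La/\g)\le 1$.

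Now I would invoke (\ref{rrrk}): $\sharp\!\left(\pcv(\La)\setminus\pcv(\g)\right)\le\rrk(\La/\g)\le 1$. By the definition just above the lemma, this says precisely that $\g\hk\La$ increases the rank at most by one. That completes the argument.

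The only real content is making sure the rational-rank bookkeeping is airtight: that $\rrk$ is additive on short exact sequences of abelian groups (immediate from exactness of $-\otimes_\Z\Q$ over $\Q$), and that $\La/\g^\com$ cyclic forces $\rrk\le 1$ (a cyclic group is either finite, so torsion with $\rrk=0$, or isomorphic to $\Z$ with $\rrk=1$). There is no genuine obstacle here — the lemma is essentially a formal consequence of (\ref{rrrk}) and the definition of "small." If one wanted to avoid citing (\ref{rrrk}) directly, the alternative is to note that Hahn's theorem embeds $\La_\Q$ into its Hahn product over $\pcv(\La_\Q)=\pcv(\La)$, and each index in $\pcv(\La)\setminus\pcv(\g)$ contributes at least one $\Q$-linearly independent element to $\La_\Q/\gq$, hence to $\La/\g$ after clearing denominators; but since (\ref{rrrk}) is already recorded in the text, I would simply quote it.
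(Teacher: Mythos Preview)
Your proof is correct and follows exactly the same approach as the paper: the paper's proof is the one-liner ``Since $\rrk(\La/\g)\le1$, the statement follows from the inequality (\ref{rrrk}),'' relying on the observation (recorded just after the definition of small extension) that a small extension has $\rrk(\La/\g)\le1$. Your short exact sequence argument merely spells out that observation in more detail, so there is no substantive difference.
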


\begin{proof}
Since $\rrk(\La/\g)\le1$, the statement follows from the inequality (\ref{rrrk}). 
\end{proof}

\subsection*{Small subextensions of a fixed universe}

From now on, we fix an extension $\g\hk U$ of ordered groups, and we identify $\g$ with its image in $U$.


We are not aiming at a classification of the small extensions of $\g$ in $U$. Rather, in view of the applications to valuation theory, we are interested in the classification of the elements in $U$ by a certain equivalence relation.\e

\noindent{\bf Definition.} We say that $\be,\ga\in U$ are \emph{$\g$-equivalent} if there exists an isomorphism of ordered groups
$$
\ggb\iso\gga,
$$
which acts as the identity on $\g$ and sends $\be$ to $\ga$. 

In this case, we write $\be\sim_\sme\ga$ if the base group $\g$ is clear from the context.

We denote by $[\be]_\sme\subset U$ the class of $\be$.
\e

By Lemma \ref{MaxComm}, a grup homomorphism $\gen{\g^\com,\be}\to U$ acting as the identity on $\g$, necessarily acts as the identity on $\g^\com$. 
This justifies the following result.

\begin{lemma}\label{bebe}
\begin{enumerate}
\item Two elements $\be,\ga\in U$ are $\g$-equivalent if and only if they are $\g^\com$-equivalent. 
\item If $\be\in \g^\com$, then $[\be]_\sme=\{\be\}$. 
\end{enumerate}
\end{lemma}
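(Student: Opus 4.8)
The plan is to prove both parts directly from the definitions, using Lemma~\ref{MaxComm} as the key input.

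\textbf{Part (1).} The forward implication is immediate, since a $\g^\com$-equivalence is in particular a $\g$-equivalence (any isomorphism fixing $\g^\com$ pointwise fixes $\g$ pointwise). For the converse, suppose $\psi\colon\ggb\iso\gga$ is an isomorphism of ordered groups fixing $\g$ and sending $\be$ to $\ga$. First I would observe that $\psi$ maps $\gen{\g,\be}^\com$ onto $\gen{\g,\ga}^\com$, because commensurability over $\g$ is preserved by any $\g$-fixing isomorphism: $n\xi\in\g$ iff $n\psi(\xi)=\psi(n\xi)\in\psi(\g)=\g$. Now both $\gen{\g,\be}^\com$ and $\gen{\g,\ga}^\com$ are commensurable extensions of $\g$, hence by Lemma~\ref{MaxComm} each embeds uniquely into $\gq$ over $\g$; since $\psi$ restricts to an isomorphism between them fixing $\g$, and the embeddings into $\gq$ are unique, the two images in $\gq$ coincide --- in particular $\g^\com$ (computed inside $U$) equals both $\gen{\g,\be}^\com$ and $\gen{\g,\ga}^\com$, provided $\be,\ga\in\g^\com$ or not simultaneously. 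Here one must be slightly careful: a priori $\gen{\g,\be}^\com$ is the commensurable closure of $\g$ \emph{inside} $\gen{\g,\be}$, which could be smaller than $\g^\com\cap\gen{\g,\be}$ only if some element of $\gen{\g,\be}$ commensurable over $\g$ lies outside $\gen{\g,\be}$ --- impossible. So $\gen{\g,\be}^\com=\g^\com\cap\gen{\g,\be}$ and likewise for $\ga$. Then $\psi$ restricted to $\g^\com\cap\gen{\g,\be}$ is a $\g$-fixing endomorphism into $\gq$, hence by uniqueness in Lemma~\ref{MaxComm} it is the canonical identification; thus $\psi$ already acts as the identity on $\g^\com\cap\gen{\g,\be}$, which (since $\psi$ sends this to $\g^\com\cap\gen{\g,\ga}$) forces $\gen{\g,\be}^\com=\gen{\g,\ga}^\com$ and $\psi$ to fix it pointwise. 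Therefore $\psi$ witnesses that $\be,\ga$ are $\g^\com$-equivalent.

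\textbf{Part (2).} Suppose $\be\in\g^\com$. Then $\gen{\g,\be}\subset\g^\com$, so $\gen{\g,\be}$ is a commensurable extension of $\g$. If $\ga\sim_\sme\be$ via an isomorphism $\psi\colon\gen{\g,\be}\iso\gen{\g,\ga}$ fixing $\g$, then $\gen{\g,\ga}$ is also commensurable over $\g$ (being isomorphic over $\g$ to a commensurable extension), so both $\gen{\g,\be}$ and $\gen{\g,\ga}$ embed uniquely into $\gq$ over $\g$ by Lemma~\ref{MaxComm}. By uniqueness, $\psi$ composed with the embedding of $\gen{\g,\ga}$ into $\gq$ equals the embedding of $\gen{\g,\be}$ into $\gq$; so $\psi$ is the restriction of the identity on (the image in) $\gq$. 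In particular $\psi(\be)=\be$, i.e. $\ga=\psi(\be)=\be$. Hence $[\be]_\sme=\{\be\}$.

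The routine part is checking that all the closures-over-$\g$ match up correctly inside $U$; the genuine content is the uniqueness clause in Lemma~\ref{MaxComm}, which rigidifies commensurable extensions and thereby forces any $\g$-equivalence to be automatically a $\g^\com$-equivalence. I expect the main obstacle to be purely bookkeeping: making sure that ``$\gen{\g,\be}^\com$'' is unambiguous, i.e. that it does not matter whether one closes up inside $\gen{\g,\be}$ or intersects the closure in $U$ (both give the same group because commensurable elements of $\gen{\g,\be}$ cannot escape $\gen{\g,\be}$), and tracking that $\psi$ sends the one closure onto the other.
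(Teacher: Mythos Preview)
Your Part~(2) is correct and matches the paper's approach: the uniqueness clause of Lemma~\ref{MaxComm} forces any $\g$-fixing embedding of a commensurable extension into $\gq$ to be the canonical one, hence $\psi$ is the identity and $\ga=\be$.

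In Part~(1), however, there is a genuine gap stemming from a confusion of domains. A $\g^{\com}$-equivalence is, by definition, an isomorphism $\gen{\g^{\com},\be}\iso\gen{\g^{\com},\ga}$ fixing $\g^{\com}$, whereas your map $\psi$ is only defined on the (generally strictly smaller) subgroup $\gen{\g,\be}$. You correctly show that $\psi$ fixes $\g^{\com}\cap\gen{\g,\be}$ pointwise, but this does not produce an isomorphism on $\gen{\g^{\com},\be}$: for instance, if $\g=\Z$, $U=\R$, $\be=\sqrt2$, then $\g^{\com}=\Q$ is not contained in $\gen{\g,\be}=\Z+\Z\sqrt2$, so $\psi$ simply does not see most of $\g^{\com}$. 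What is missing is an \emph{extension} step. When $\be\notin\g^{\com}$ (hence also $\ga\notin\g^{\com}$), every element of $\gen{\g^{\com},\be}$ is uniquely of the form $c+m\be$ with $c\in\g^{\com}$, $m\in\Z$; define $\tilde\psi(c+m\be)=c+m\ga$, and check order-preservation by choosing $n\in\N$ with $nc\in\g$ and applying the known order-preservation of $\psi$ to $nc+nm\be\in\gen{\g,\be}$. When $\be\in\g^{\com}$, your Part~(2) already gives $\be=\ga$, and the identity on $\g^{\com}$ is the required $\g^{\com}$-equivalence. The same domain confusion affects your ``forward'' direction: a $\g^{\com}$-equivalence lives on $\gen{\g^{\com},\be}$, so one must explicitly \emph{restrict} it to $\gen{\g,\be}$ to obtain a $\g$-equivalence --- this is indeed immediate, but not for the reason you state.
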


Let $U^{\incom}=U\setminus \g^\com$ be the subset of incommensurable elements over $\g$. There is an easy criterion to decide when two elements in $U^{\incom}$ are $\g$-equivalent.

\begin{lemma}\label{criteri}
Let $\be,\ga\in U^{\incom}$ with $\be<\ga$. Then, $\be$ and $\ga$ are $\g$-equivalent if and only if there is no $b\in \g^\com$ such that $\be<b<\ga$.  
\end{lemma}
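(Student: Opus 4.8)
The plan is to prove both implications separately, using the structural theory of small extensions and, crucially, Lemma~\ref{criteri}'s formulation of $\g$-equivalence via the isomorphism $\gge\iso\ggd$ of Lemma~\ref{bebe}. First I would dispose of the easy direction: suppose there exists $b\in\g^\com$ with $\be<b<\ga$. If an isomorphism $\varphi\colon\ggb\iso\gga$ fixing $\g$ and sending $\be\mapsto\ga$ existed, then since $b\in\g^\com\subset\ggb$ we would have $\varphi(b)=b$ (by Lemma~\ref{bebe}(1), or directly: $\varphi$ fixes $\g^\com$). But $\varphi$ is order-preserving, so $\be<b$ forces $\varphi(\be)<\varphi(b)$, i.e. $\ga<b$, contradicting $b<\ga$. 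Hence no such $b$ can exist. (One should also note $\be\ne b\ne\ga$ since $\be,\ga$ are incommensurable while $b$ is commensurable.)

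For the converse, assume there is no $b\in\g^\com$ with $\be<b<\ga$; I must construct an isomorphism $\varphi\colon\ggb\iso\gga$ fixing $\g$ with $\varphi(\be)=\ga$. By Lemma~\ref{bebe}(1) it suffices to fix $\g^\com$, so I may work inside $\gq$ via Lemma~\ref{MaxComm} and replace $\g$ by $\g^\com$, i.e. assume $\g=\g^\com$ is the commensurable part, and $\be,\ga$ are both incommensurable over $\g$. Then $\ggb/\g$ and $\gga/\g$ are both infinite cyclic — generated by the images of $\be$ and $\ga$ respectively, since any element $\xi\in\ggb$ is commensurable with $n\be$ for some $n$, and $\xi-m\be\in\g^\com=\g$ forces... — here I need to be a little careful: $\ggb$ consists of elements $\tfrac1N(g+n\be)$ and I should check that $\ggb\cap\gq = \g$ (otherwise the quotient isn't cyclic). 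Actually the cleanest route: define $\varphi$ on the subgroup of $\gq$ generated by $\g$ and $\be$ by $\varphi(g+n\be)=g+n\ga$ for $g\in\g$, $n\in\Z$ (extending $\Q$-linearly to $\ggb$ after checking it is well-defined, which it is because $\be\notin\gq\supseteq\g^\com$ so $g+n\be=0$ forces $n=0$ and $g=0$). This $\varphi$ is a group isomorphism $\ggb\iso\gga$ fixing $\g$ with $\varphi(\be)=\ga$; the entire content is that \emph{$\varphi$ and its inverse are order-preserving}.

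The main obstacle is precisely this order-preservation check, and it is where the hypothesis "no $b\in\g^\com$ between $\be$ and $\ga$" is used. An arbitrary element of $\ggb$ has the form $\tfrac1N(g+n\be)$; its sign must be compared with $0$, and $\varphi$ sends it to $\tfrac1N(g+n\ga)$. By clearing denominators and using that multiplication by $N>0$ preserves signs, it suffices to show: for $g\in\g^\com$ and $n\in\Z$, $g+n\be>0 \iff g+n\ga>0$. For $n=0$ this is trivial; for $n>0$ this says $\be>-g/n \iff \ga>-g/n$, and $-g/n$ ranges over $\g^\com$ (its divisible hull, but we arranged $\g=\g^\com=\gq\cap U$... — I'd state it for the relevant divisible group); for $n<0$ symmetrically $\be<-g/|n| \iff \ga<-g/|n|$. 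So the claim reduces to: \emph{for every $c$ in the relevant commensurable group, $\be>c\iff\ga>c$}. Since $\be<\ga$, "$\be>c\Rightarrow\ga>c$" is automatic; the reverse, "$\ga>c\Rightarrow\be>c$", is exactly the contrapositive of the hypothesis — if $\be\le c<\ga$ then ($c\ne\be$ by incommensurability, so $\be<c<\ga$) we would have a commensurable element strictly between $\be$ and $\ga$, contradiction. This finishes the equivalence, and the inverse map $g+n\ga\mapsto g+n\be$ is order-preserving by the identical argument with $\be,\ga$ swapped and the symmetric reading of the hypothesis.
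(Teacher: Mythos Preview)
Your argument is essentially the paper's: build the unique group isomorphism $\ggb\to\gga$ fixing $\g$ with $\be\mapsto\ga$, then verify order-preservation by reducing to the equivalence $-a/m<\be\iff -a/m<\ga$ for commensurable $-a/m$, which is exactly the hypothesis. The paper is more direct---since $\be$ is incommensurable, every element of $\ggb$ is uniquely $a+m\be$ with $a\in\g$ and $m\in\Z$, so your $\tfrac1N$ denominators and the preliminary replacement of $\g$ by $\g^\com$ via Lemma~\ref{bebe} are unnecessary detours (though harmless once you clear them).
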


\begin{proof}
All elements in the subgroup $\ggb$ may be written in a unique way as $$a+m\be, \quad\mbox{ with }a\in\g, \ m\in\Z.$$
 
Hence, for all $\be,\ga\in U^{\incom}$ there is a unique group isomorphism $$h\colon \ggb\iso\gga$$ acting as the identity on $\g$ and sending $\be$ to $\ga$.
We have $\be\sim_\sme\ga$ if and only if this homomorphism $h$ preserves the ordering.

Suppose that $\be<b<\ga$, for some  $b\in \g^\com$. Then, $h$ does not preserve the ordering, because $\ga=h(\be)>b=h(b)$. Thus, $\be$ and $\ga$ are not $\g$-equivalent.

Conversely, suppose that there is no $b\in \g^\com$ such that $\be<b<\ga$. Let us check that the homomorphism $h$ preserves the ordering.

For all $a+m\be\in\ggb$, we clearly have,
\begin{equation}\label{unequal}
a+m\be>0 \sii 
\begin{cases}
m=0,\quad a>0,\ \mbox{ or}\\
m>0,\quad -a/m<\be,\ \mbox{ or}\\
m<0,\quad -a/m>\be.
\end{cases}
\end{equation}

If $m\ne 0$, then $-a/m$ belongs to $\g^\com$. By our assumption, 
$$
-a/m<\be\ \sii\ -a/m<\ga.
$$
Hence, the conditions of the right-hand side of (\ref{unequal}) are satisfied if we replace $\be$ with $\ga$. Therefore, $a+m\ga>0$, and this proves that $h$ preserves the ordering.
\end{proof}

Our aim in the following sections is to find explicit computations of the quotient set $U/\!\sim_\sme$ for some concrete ordered groups $U$.

\subsection{Small extensions that preserve the rank}\label{secSameRank}

Let $\g$ be an ordered group and let $\left(I;(Q_i)_{i\in I}\right)$ be the skeleton of $\gq$. 

For all $i\in I$ we fix an embedding $Q_i\hk\R$ of ordered groups. For all $q\in Q_i$ we use the same symbol $q\in\R$ to denote the image of $q$ by this embedding. 

By Corollary \ref{embRlx}, there is a rank-preserving embedding 
$$
\g\ \hra\ \gq\ \hra\ \hq\ \hra\ \rlex.
$$

This extension $\g\hk\rlex$ is maximal among all rank-preserving extensions of $\g$.

\begin{lemma}\label{MaxEqRk}
For any rank-preserving extension $\g\hk \La$, there exists an embedding $\La\hk\rlex$ fitting into a commutative diagram
$$
\as{1.2}
\begin{array}{ccc}
\La&&\\
\uparrow&\searrow&\\
\g&\lra&\rlex
\end{array}
$$
\end{lemma}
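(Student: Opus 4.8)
The plan is to run the functoriality machinery of Lemma \ref{compatible} on the rank-preserving extension $\iota\colon\g\hk\La$. First I would pass to divisible hulls: by Lemma \ref{MInDiv} (applied to the composite $\g\hk\La\hk\La_\Q$) we obtain a commutative square as in \eqref{divhull}, so it suffices to produce an embedding $\La_\Q\hk\R^J_\lx$ compatible with the fixed embedding $\gq\hk\rlex$, where $J=\pcv(\La)=\pcv(\La_\Q)$. The point of the rank-preserving hypothesis is precisely that the induced embedding $I=\pcv(\gq)\hk J=\pcv(\La_\Q)$ of Lemma \ref{rkEmbed} is a \emph{bijection}: $\sharp(\pcv(\La)\setminus\pcv(\g))=0$. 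Under this identification $I=J$, the map \eqref{rirj} becomes the identity, so the right-hand vertical arrow $\rlex\hk\R^J_\lx$ of diagram \eqref{embRlex} is just the identity map $\rlex=\R^J_\lx$.

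Next I would invoke Lemma \ref{compatible} for the extension $\g\hk\La$: it provides choices of Hahn's immediate embeddings $\gq\hk\H(\gq)$ and $\La_\Q\hk\H(\La_\Q)$ making diagram \eqref{embRlex} commute. Reading off the bottom row of \eqref{embRlex}, the composite $\gq\hk\hq\hk\rlex$ is exactly the fixed embedding of Corollary \ref{embRlx}, and reading off the top row gives an embedding $\La\hk\La_\Q\hk\H(\La_\Q)\hk\R^J_\lx$. Because the whole diagram \eqref{embRlex} commutes and the right-hand vertical arrow is the identity $\rlex=\R^J_\lx$, the two composites agree on $\g$; that is, the triangle
$$
\as{1.2}
\begin{array}{ccc}
\La&&\\
\uparrow&\searrow&\\
\g&\lra&\rlex
\end{array}
$$
commutes, which is the assertion.

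One small point requires care: the embeddings $Q_i\hk\R$ were fixed in advance at the start of \S\ref{secSameRank}, and the functoriality argument preceding Lemma \ref{compatible} chooses, for each $i$ with $\iota(i)=j$, an embedding $L_j\hk\R$ compatibly; under the identification $I=J$ this forces $L_j\hk\R$ to restrict correctly, which is automatic since $Q_i=Q_{\iota(i)}$ and the canonical map $Q_i\hk L_{\iota(i)}$ is an isomorphism in the rank-preserving case (the components are preserved up to the divisible-hull normalization). Thus no inconsistency of normalizations arises, and the fixed embedding $\g\hk\rlex$ is genuinely the bottom composite. The only mild obstacle is this bookkeeping — checking that "rank-preserving" really does collapse \eqref{rirj} to the identity and that Lemma \ref{compatible} can be applied with the already-fixed real embeddings rather than freshly chosen ones — but all of it is contained in the functoriality discussion already carried out in Section 1, so the proof is essentially a citation of Lemmas \ref{MInDiv}, \ref{rkEmbed}, and \ref{compatible} assembled in the right order.
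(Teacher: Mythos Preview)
Your proposal is correct and follows essentially the same route as the paper: use the rank-preserving hypothesis to identify $I=\pcv(\g)$ with $J=\pcv(\La)$, invoke Lemma \ref{compatible} to obtain the commutative diagram \eqref{embRlex}, and observe that under $I=J$ the right-hand vertical map \eqref{rirj} is an isomorphism. The paper's proof is just a terser version of exactly this argument, without the additional bookkeeping remarks you make about the normalizations $Q_i\hk\R$.
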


\begin{proof}
By hypothesis, we have an isomorphism of ordered sets:
$$
\pcv(\g)=I\iso J=\pcv(\La).
$$

By Lemma \ref{compatible}, the immediate embeddings $\gq\hk\hq$, $\La_\Q\hk\H(\La_\Q)$ can be chosen in a compatible way, leading to a commutative diagram of embeddings
$$
\as{1.4}
\begin{array}{ccccccc}
\La&\lra&\La_\Q&\lra&\mathbb{H}(\La_\Q)&\lra&\R^J_{\lx}\\
\uparrow&&\uparrow&&\uparrow&&\uparrow\\
\g&\lra&\gq&\lra&\hq&\lra&\rlex
\end{array} 
$$
By equation (\ref{rirj}), the right-hand vertical mapping is an isomorphism.
\end{proof}\e

\noindent{\bf Caution!} The embedding $\La\hk\rlex$ is not necessarily unique. Thus, every rank-preserving extension of $\g$ is equivalent to some subextension of $\g\hk\rlex$, but not to a unique one!  \e

For instance, if $\be,\ga\in\rlex$ are two different incommensurable elements (over $\g$) which are $\g$-equivalent, then the subgroups $\ggb$ and $\gga$ are equivalent, but they may be different.

Our aim is to find a canonical system of representatives of $\rlex/\!\sim_\sme$.
Clearly, $$\left(\rlex\right)^\com=\gq,$$
and the classes of commensurable elements are computed in Lemma \ref{bebe}.

Thus, we focus on the computation of classes of incommensurable elements.\e


For any initial segment $S\in\inii$, consider the canonical projection
$$
\pi_S\colon \rlex\lra\R^S_{\lx},\qquad \be=(\be_i)_{i\in I}\longmapsto \be_S=(\be_i)_{i\in S}. 
$$
This is a homomorphism of ordered groups, admitting a section
$$
\iota_S\colon \R^S_{\lx}\hra\rlex,\qquad \rho=(\rho_i)_{i\in S}\longmapsto \iota_S(\rho)=(\rho\mid 0), 
$$
where $(\rho\mid 0)$ has the obvious meaning.\e

\noindent{\bf Definition.} An element $\rho\in\R^S_{\lx}$ is said to be \emph{commensurable} over $\g$ if there exists $b\in\gq$ such that $b_S=\rho$.



\begin{lemma}\label{SameClass}
 For any $\be\in\left(\rlex\right)^{\incom}$ and any $S\in\inii$ such that $\be_S$ is incommensurable, we have $\be\sim_\sme(\be_S\mid0)$. 
\end{lemma}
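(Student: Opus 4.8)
The statement to prove is Lemma~\ref{SameClass}: for $\be\in(\rlex)^{\incom}$ and $S\in\inii$ with $\be_S\in\R^S_{\lx}$ incommensurable over $\g$, we have $\be\sim_\sme(\be_S\mid 0)$. Set $\ga=(\be_S\mid 0)=\iota_S(\be_S)$. Since both $\be$ and $\ga$ lie in $(\rlex)^{\incom}$ (the fact that $\ga$ is incommensurable follows from $\be_S=\pi_S(\ga)$ being incommensurable and Lemma~\ref{MInDiv}/the definition of commensurability), the plan is to invoke the criterion of Lemma~\ref{criteri}: it suffices to show there is no $b\in\gq=(\rlex)^\com$ strictly between $\be$ and $\ga$ (after arranging the correct order, i.e.\ treating the cases $\be<\ga$ and $\ga<\be$ symmetrically, or just applying the criterion to whichever of the two is smaller).

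The heart of the argument is a comparison of coordinates. First I would observe that $\ga=(\be_S\mid 0)$ and $\be=(\be_i)_{i\in I}$ agree on all coordinates indexed by $S$ and that $\ga$ vanishes on $I\setminus S$. So in the lexicographic order, $\be$ and $\ga$ are compared by looking at the first index in $I\setminus S$ where $\be$ is nonzero: if $i_0:=\min\{i\in I\setminus S:\be_i\ne 0\}$ exists, then $\be<\ga$ iff $\be_{i_0}<0$, and $\ga<\be$ iff $\be_{i_0}>0$; and if no such $i_0$ exists then $\be=\ga$ and there is nothing to prove (though in fact $\be$ incommensurable forces $\be\ne\ga$ unless $S=I$, in which case the statement is trivial). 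Now suppose, for contradiction, that $b\in\gq$ lies strictly between $\be$ and $\ga$. The key step is: since $\ga$ and $\be$ agree on $S$ and $b$ is squeezed between them in the lex order, $b$ must also agree with them on $S$, i.e.\ $b_S=\be_S$. This is where incommensurability of $\be_S$ is used to reach the contradiction: it would say $\be_S$ is commensurable over $\g$ (there exists $b\in\gq$ with $b_S=\be_S$), contrary to hypothesis.

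Let me make the squeezing step precise, since it is the main technical point. Suppose $\be<\ga$ (the other case is symmetric, swapping roles). Let $j$ be the first index in $I$ where $b$ and $\be$ differ. If $j\in S$, then since $\be$ and $\ga$ agree up to and including $j$ (indeed on all of $S$), the comparison of $b$ with $\ga$ at index $j$ goes the \emph{same} way as the comparison of $b$ with $\be$ at index $j$; so $b<\be$ iff $b<\ga$, contradicting $\be<b<\ga$. Hence $j\in I\setminus S$, which means $b_i=\be_i$ for all $i\in S$, i.e.\ $b_S=\be_S$. That is exactly the assertion that $\be_S$ is commensurable over $\g$, contradicting the hypothesis. (One should also dispatch the edge case $b=\be$ or $b=\ga$, which is excluded by the strict inequalities, and the case $\be=\ga$, which makes the claim vacuous.) Therefore no such $b$ exists, and Lemma~\ref{criteri} yields $\be\sim_\sme\ga$.

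\textbf{Main obstacle.} The only delicate point is the bookkeeping in the lexicographic comparison: one must be careful that ``the first place $b$ and $\be$ differ'' behaves well with respect to well-ordered supports (so that such a first index genuinely exists — this is where membership in the Hahn product, not just the full product, matters) and that replacing $\be$ by $\ga$ does not change the sign of that first discrepancy when the index lies in $S$. Everything else is a direct application of Lemma~\ref{criteri} together with the definition of commensurability for elements of $\R^S_{\lx}$. I expect the write-up to be short once the coordinate comparison is set up cleanly.
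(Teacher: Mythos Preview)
Your proposal is correct and follows essentially the same route as the paper: invoke Lemma~\ref{criteri} and show that any element strictly between $\be$ and $(\be_S\mid 0)$ must share the $S$-projection $\be_S$, hence cannot lie in $\gq$. The paper states the key squeezing step (``any $\ga$ with $\be<\ga<(\be_S\mid0)$ satisfies $\ga_S=\be_S$'') without justification, whereas you spell out the lexicographic bookkeeping; your reference to Lemma~\ref{MInDiv} is a slip (you only need the definition of commensurability for $\R^S_{\lx}$), but the argument is otherwise sound.
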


\begin{proof}
Suppose that $\be<(\be_S\mid0)$. Any $\ga\in\rlex$ such that $\be<\ga<(\be_S\mid0)$ satisfies necessarily $\ga_S=\be_S$. Since this element is incommensurable, $\ga$ cannot belong to $\gq$. By the criterion of Lemma \ref{criteri}, 
$\be\sim_\sme(\be_S\mid0)$.

If $\be>(\be_S\mid0)$, the argument is completely analogous. 
\end{proof}

For the construction of a canonical system of representatives of $\left(\rlex\right)^{\incom}/\!\sim_\sme$ it suffices to consider inside each class $[\be]_\sme$ the element having minimal support.\e

\noindent{\bf Definition.} A \emph{minimal incommensurable} element is any $\be\in\rlex$ for which  there exists a minimal initial segment $S\in\inii$ satisfying: 
$$\be_S \ \mbox{ is incommensurable \ and \ }\be=(\be_S\mid0).
$$
We denote by $\eqr(\g)$ be the set of minimal incommensurable elements in $\rlex$. 

\begin{theorem}\label{MinClass}
The set of minimal incommensurable elements is a system of representatives of  $\left(\rlex\right)^{\incom}/\!\sim_\sme$.
\end{theorem}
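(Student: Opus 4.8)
The plan is to show two things: first, that every element of $\left(\rlex\right)^{\incom}$ is $\g$-equivalent to some minimal incommensurable element (so the set $\eqr(\g)$ hits every class), and second, that two distinct minimal incommensurable elements are never $\g$-equivalent (so the set meets each class exactly once).

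For surjectivity, start with an arbitrary $\be\in\left(\rlex\right)^{\incom}$. The set of initial segments $S\in\inii$ for which $\be_S$ is incommensurable is non-empty, since $\be_I=\be$ itself is incommensurable. First I would argue that this set of initial segments has a minimal element $S_0$: the intersection of a chain of such initial segments is again an initial segment, and one must check that $\be_{S_0}$ remains incommensurable where $S_0=\bigcap S$; this is where I expect to use Lemma \ref{hahn}, which controls which truncations of elements of $\gq$ are possible — if $\be_{S_0}$ were commensurable, say $\be_{S_0}=b_{S_0}$ for some $b\in\gq$, then using the description of $\gq$ inside $\hq$ one derives that $\be_S$ is already commensurable for some $S$ in the chain strictly contained in a larger member, contradicting minimality. (Alternatively, one notes that $\inii$ is well-ordered-like enough under the relevant operations that infima exist; the section's Lemma \ref{hahn} is the tool that transfers commensurability of a truncation back to commensurability of a shorter truncation.) Having produced the minimal $S_0$, set $\be' = (\be_{S_0}\mid 0)$. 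By Lemma \ref{SameClass} applied with $S=S_0$ we get $\be\sim_\sme\be'$, and by construction $S_0$ is the minimal initial segment witnessing the incommensurability-plus-truncation condition for $\be'$, so $\be'\in\eqr(\g)$. Hence every class is represented.

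For injectivity, suppose $\be,\ga\in\eqr(\g)$ are distinct and $\be\sim_\sme\ga$; I must derive a contradiction. Let $S$ and $T$ be the minimal initial segments attached to $\be$ and $\ga$ respectively, so $\be=(\be_S\mid0)$ and $\ga=(\ga_T\mid0)$ with $\be_S,\ga_T$ incommensurable. After possibly swapping, assume $\be<\ga$. By Lemma \ref{criteri}, $\be\sim_\sme\ga$ means there is no $b\in\gq=\left(\rlex\right)^\com$ with $\be<b<\ga$. The strategy is to compare the supports. If $S\subsetneq T$ (or the two initial segments are incomparable, but initial segments of a totally ordered set are totally ordered by inclusion, so that case does not arise), then on the coordinates in $T\setminus S$ the element $\be$ vanishes while $\ga$ need not; I would then exhibit, using Lemma \ref{hahn}, a commensurable element $b\in\gq$ with $b_S$ chosen to make $b$ land strictly between $\be$ and $\ga$ in the lexicographic order — roughly, one takes $b=b_{i,q}$ for a suitable index $i$ just past $S$ and a rational value $q$ squeezed appropriately, contradicting Lemma \ref{criteri}. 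If instead $S=T$, then $\be_S$ and $\ga_S$ are two incommensurable elements of $\R^S_{\lx}$ with $(\be_S\mid0)\ne(\ga_S\mid0)$, i.e. $\be_S\ne\ga_S$; since neither is commensurable and they differ, minimality of $S$ for both forces their first point of disagreement to occur at a coordinate that is "deep enough" that, again by Lemma \ref{hahn}, a commensurable $b$ can be inserted strictly between them — contradiction. In all cases Lemma \ref{criteri} is violated, so $\be=\ga$.

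The main obstacle I anticipate is the existence and incommensurability of the minimal initial segment $S_0$ in the surjectivity half: one needs the closure of the family $\{S : \be_S \text{ incommensurable}\}$ under the appropriate infimum operation, and this is precisely where the structural input of Lemma \ref{hahn} (describing which prefixes are realized by elements of the divisible hull $\gq$) must be used carefully. Once that is in hand, both the surjectivity conclusion (via Lemma \ref{SameClass}) and the injectivity conclusion (via Lemma \ref{criteri} together with an insertion argument using Lemma \ref{hahn}) are relatively mechanical.
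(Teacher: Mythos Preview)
Your two-part strategy (every class is hit; distinct minimal elements are inequivalent) is exactly the paper's, which packages the two halves as Lemmas~\ref{MinExists} and~\ref{MinInequiv}. The injectivity half is essentially right: the paper unifies your case split by taking $j=\min(\supp(\be-\ga))$, observing that $j$ must lie in $S$ (otherwise $\ga_S=\be_S$ is incommensurable and the minimality of $T$ for $\ga$ fails), and then inserting a commensurable element at coordinate $j$ via Lemma~\ref{hahn} exactly as you sketch.

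Where your proposal has a real gap is the existence of the minimal $S_0$. You take $S_0=\bigcap\{S:\be_S\text{ incommensurable}\}$ and say Lemma~\ref{hahn} will show $\be_{S_0}$ is still incommensurable, but Lemma~\ref{hahn} is not the relevant tool here --- it produces elements of $\gq$ with prescribed leading term, and says nothing about propagating commensurability of a truncation down a chain. The correct argument is: if $\be_{S_0}=b_{S_0}$ for some $b\in\gq$, then $\supp(\be-b)$ is well-ordered (this is the defining property of the Hahn product) and disjoint from $S_0$, so its minimal element $j$ lies outside some $S$ in your chain; since $S$ is an initial segment, $S\subset I_{<j}$, hence $(\be-b)_S=0$ and $\be_S$ is commensurable, a contradiction. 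The paper sidesteps this entirely with a cleaner device: it only looks at initial segments of the form $I_{<j}$ for $j$ ranging over the well-ordered set $\supp(\be)\cup\{\infty\}$, so the minimum is immediate. Either route works, but in both cases the engine is the well-ordering of supports, not Lemma~\ref{hahn}.
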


The proof of this theorem follows from Lemmas \ref{MinInequiv} and \ref{MinExists} below.



\begin{lemma}\label{MinInequiv}
The minimal incommensurable elements are pairwise $\g$-inequivalent. 
\end{lemma}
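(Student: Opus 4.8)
The plan is to show that two distinct minimal incommensurable elements $\be, \ga \in \eqr(\g)$ cannot be $\g$-equivalent, by exploiting the minimality of their defining initial segments together with the criterion of Lemma \ref{criteri}. Write $S = \supp(\be)$ and $T = \supp(\ga)$ for the minimal initial segments witnessing that $\be$ and $\ga$ are minimal incommensurable; so $\be = (\be_S \mid 0)$ with $\be_S$ incommensurable, and similarly for $\ga$, and by minimality no proper initial segment $S' \subsetneq S$ has $\be_{S'}$ incommensurable (and likewise for $T$).

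First I would dispose of the case $S \ne T$. Since initial segments of $I$ are totally ordered by inclusion, we may assume $S \subsetneq T$. Then $\be = (\be_S \mid 0)$ has all coordinates indexed by $T \setminus S$ equal to zero, whereas $\ga_T$ is incommensurable. The key point is that $\be_S$ is \emph{commensurable} when viewed as an element of $\R^T_{\lx}$: indeed, if $\be_S$ were incommensurable over $\g$ already as an element of $\R^S_{\lx}$ — which it is — then... wait, more carefully: the obstruction is that $\be$ and $\ga$ have different "first incommensurable coordinate blocks." Concretely, since $\ga_T$ is incommensurable but $\ga_{S} $ is commensurable (because $S \subsetneq T$ and $T$ is the \emph{minimal} initial segment making $\ga$'s truncation incommensurable), there is $b \in \gq$ with $b_S = \ga_S$. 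Now compare $\ga$ with $\be$: one shows there is an element of $\gq$ strictly between them (or that they sit on opposite sides of such an element), using that $\ga$ and $(\ga_S \mid 0)$ differ only in coordinates $\ge$ the least element of $T \setminus S$ while $\be$ lives entirely in coordinates indexed by $S$. Passing to the truncation $\pi_{S}$, which is an order-preserving homomorphism killing nothing relevant, any isomorphism $\gen{\g, \be} \iso \gen{\g, \ga}$ fixing $\g$ would have to match up the $\g^\com$-cuts determined by $\be$ and $\ga$; but the cut of $\be$ in $\gq$ is "refined at level $S$" while that of $\ga$ is not, giving a $b \in \gq$ separating them. The main technical work is organizing this comparison cleanly via the projections $\pi_S$ and the commensurability notion for elements of $\R^S_{\lx}$.

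Next, the case $S = T$. Here both $\be$ and $\ga$ are supported on the same initial segment $S$, with $\be_S, \ga_S$ incommensurable in $\R^S_{\lx}$, and the minimality hypothesis says no proper initial segment of $S$ truncates either to an incommensurable element. If $\be \sim_\sme \ga$ and $\be \ne \ga$, say $\be < \ga$; by Lemma \ref{criteri} there is no $b \in \gq$ with $\be < b < \ga$. Then for every index $i \in S$ the coordinates must agree: if $i_0$ were the least index where $\be_{i_0} \ne \ga_{i_0}$, one uses Lemma \ref{hahn} to produce $b_{i_0, q} \in \gq$ of the form $(\cdots 0\, 0\, q\, \star \star \cdots)$ with the right value $q$ between $\be_{i_0}$ and $\ga_{i_0}$ (rationals being dense in $\R$, and $Q_{i_0}$ dense in $\R$ as a nonzero subgroup) so as to interpolate a commensurable element strictly between $\be$ and $\ga$ — contradicting Lemma \ref{criteri}. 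Hence $\be_S = \ga_S$, so $\be = (\be_S \mid 0) = (\ga_S \mid 0) = \ga$, contradicting $\be \ne \ga$.

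I expect the $S \ne T$ case to be the main obstacle: one must be careful that "$\be_S$ incommensurable in $\R^S_{\lx}$" does not by itself prevent a commensurable element of $\gq$ from separating $\be = (\be_S \mid 0)$ from $\ga$ in the \emph{larger} group $\rlex$, and the resolution hinges on correctly using minimality of $T$ for $\ga$ (so that $\ga_S$ \emph{is} commensurable) together with Lemma \ref{hahn} to lift a suitable separating element. The $S = T$ case is essentially a density argument packaged through Lemma \ref{hahn} and Lemma \ref{criteri}, and should be routine once the interpolation is set up.
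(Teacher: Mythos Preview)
Your overall strategy---find the first coordinate where $\be$ and $\ga$ differ, use density of $Q_i$ in $\R$ together with Lemma~\ref{hahn} to interpolate an element of $\gq$, and conclude via Lemma~\ref{criteri}---is exactly the paper's approach. However, there is a genuine gap in the execution, and the case split $S\ne T$ versus $S=T$ is both unnecessary and the source of the confusion.

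The gap is in your $S=T$ case (and is the same missing ingredient that leaves your $S\ne T$ case unfinished). You propose to take the element $b_{i_0,q}\in\gq$ from Lemma~\ref{hahn}, which has the form $(\cdots 0\,0\,q\,\star\star\cdots)$, and claim it lies strictly between $\be$ and $\ga$. But $\be$ and $\ga$ agree on the prefix $R=I_{<i_0}$, and this common prefix $\be_R=\ga_R$ need not be zero; so $b_{i_0,q}$ will in general \emph{not} satisfy $\be<b_{i_0,q}<\ga$. What you have not used is minimality of $S$: since $R\subsetneq S$, the prefix $\be_R=\ga_R$ is \emph{commensurable}, so there exists $b\in\gq$ with $b_R=\be_R$. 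Then the correct separating element is $c=b+b_{i_0,\,q-b_{i_0}}\in\gq$, which now matches the common prefix and has $i_0$-th coordinate $q$ strictly between $\be_{i_0}$ and $\ga_{i_0}$. This is precisely what the paper does.

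The paper avoids your case split entirely: it takes $j=\min\bigl(\supp(\be-\ga)\bigr)$, assumes $S\subset T$, and observes that $j>S$ would force $\be_S=\ga_S$, whence $\ga_S$ would be incommensurable with $S\subsetneq T$, contradicting minimality of $T$. Hence $j\in S$ always, and the single argument above (commensurable prefix on $I_{<j}$, then adjust the $j$-th coordinate) handles both of your cases at once. Your $S\ne T$ discussion is really just this observation that $j\in S$, after which the proof proceeds identically; there is no separate phenomenon to analyze.
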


\begin{proof}
Let $\be=(\be_S\mid0),\,\ga=(\ga_T\mid0)$ be two minimal incommensurable elements such that $\be\ne\ga$, and take $j=\min(\supp(\be-\ga))$. Assume for instance $S\subset T$.
 
If $j>S$, we have $\be_S=\ga_S$. Since $\be\ne\ga$, this implies $S\subsetneq T$, and this contradicts the minimality of $T$. Therefore, $j\in S$.

 Let $R=I_{<j}\in\inii$. Since $R\subsetneq S$, $\ga_R=\be_R$ is commensurable. Let $b=(b_i)_{i\in I}\in\gq$ such that $\ga_R=\be_R=b_R$.

Also, if (for instance) $\be_j<\ga_j$,  there exists $q\in Q_j$ such that $\be_j<q<\ga_j$. Now, consider the element $b_{j,q-b_j}\in\gq$ defined in Lemma \ref{hahn}. The element $c=b+b_{j,q-b_j}\in\gq$ satisfies $\be<c<\ga$. By Lemma \ref{criteri}, $\be$ and $\ga$ are not $\g$-equivalent.
\end{proof}

\begin{lemma}\label{MinExists}
For any $\be\in\left(\rlex\right)^{\incom}$, the subset of $\inii$ formed by the initial segments $T$ such that $\be_T$ is incommensurable contains a minimal element $S$. 

Moreover, 
 $(\be_S\mid0)$ is a minimal incommensurable element and $[\be]_\sme=\pi_S^{-1}(\be_S)$. 
\end{lemma}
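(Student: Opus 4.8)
The goal is to show that for any incommensurable $\be\in\rlex$, among all initial segments $T$ with $\be_T$ incommensurable there is a smallest one $S$, and that $(\be_S\mid 0)$ is then minimal incommensurable with $[\be]_\sme=\pi_S^{-1}(\be_S)$. First I would establish the existence of $S$ by showing that the collection $\ss=\{T\in\inii \mid \be_T \text{ incommensurable}\}$ is closed under arbitrary intersections. The set $I$ itself lies in $\ss$ since $\be_I=\be$ is incommensurable, so $\ss\ne\emptyset$. Now let $(T_\la)_{\la}$ be any family in $\ss$ and set $S=\bigcap_\la T_\la$, which is again an initial segment. Suppose for contradiction that $\be_S$ is commensurable, so there is $b\in\gq$ with $b_S=\be_S$. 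The plan is to derive from this that some $\be_{T_\la}$ is already commensurable — for then $(b_i)_{i\in T_\la}$ would witness commensurability of $\be_{T_\la}$, contradicting $T_\la\in\ss$. This is the step I expect to be the main obstacle: one must pass from "agreement on $S$" to "agreement on some $T_\la$", and a priori the $T_\la$'s could all be strictly larger than $S$.

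To handle this, note that $T_\la\setminus S$ is non-empty for each $\la$ (else $T_\la=S$ and we are done), and since $S$ is the intersection, for every $i\in I\setminus S$ there is some $\la$ with $i\notin T_\la$; but more usefully, $I\setminus S=\bigcup_\la (I\setminus T_\la)$. The difference $\be-(b_S\mid 0)$ (viewing $b_S\in\R^S_\lx$, extended by zero) has support contained in $I\setminus S$. If this difference is zero then $\be=(b_S\mid 0)$ is already in $\gq$, contradicting incommensurability of $\be$. So let $j=\min\supp(\be-(b_S\mid 0))$; then $j\in I\setminus S$, hence $j\notin T_{\la_0}$ for some $\la_0$. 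Since $T_{\la_0}$ is an initial segment and $j\notin T_{\la_0}$, we get $T_{\la_0}\subseteq I_{<j}\subseteq S$ — wait, this needs care: $j\notin T_{\la_0}$ together with $T_{\la_0}\in\inii$ forces $T_{\la_0}\subseteq I_{<j}$. But $\be$ and $(b_S\mid 0)$ agree on all indices $<j$ (by choice of $j$) and in particular on $T_{\la_0}$, so $\be_{T_{\la_0}}=(b_i)_{i\in T_{\la_0}}$ is commensurable, contradicting $T_{\la_0}\in\ss$. This proves $\be_S$ is incommensurable, so $S\in\ss$ and $S$ is its minimum.

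Next, $(\be_S\mid 0)$ is minimal incommensurable essentially by construction: its defining initial segment must be $S$ itself, because any $R$ with $\big((\be_S\mid0)\big)_R=(\be_S)_R$ incommensurable and $(\be_S\mid0)=\big((\be_S)_R\mid 0\big)$ forces $R\subseteq S$ (the second condition kills the coordinates in $S\setminus R$, so $\be_R$ having incommensurable means... ) — more directly, such an $R$ satisfies $R\in\ss$ after identifying $\big((\be_S\mid0)\big)_R$ with $\be_R$ on $R\subseteq S$, but wait one needs $R\subseteq S$ first; since $(\be_S\mid 0)$ is supported in $S$, if $R\not\subseteq S$ then $\big((\be_S\mid0)\big)_R$ still equals $\be_R$ only on $R\cap S$ and is $0$ off $S$, and the "$=(\cdot\mid0)$" condition is automatic; then incommensurability of this truncation, together with the fact that truncating further to $R\cap S$ doesn't change the element, shows $R\cap S\in\ss$, whence $S\subseteq R\cap S\subseteq R$ by minimality, so actually $S\subseteq R$, and combined gives $R\supseteq S$ with the truncation at $R$ equalling the truncation at $S$ — so $S$ is indeed the minimal such, i.e.\ $(\be_S\mid0)\in\eqr(\g)$. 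I would streamline this by simply observing that the map $T\mapsto \be_T$ and $T\mapsto \big((\be_S\mid0)\big)_T$ define the same element whenever $T\supseteq S$, and for $T\subseteq S$ the truncation $(\be_S)_T=\be_T$, so the two elements $\be$ and $(\be_S\mid0)$ have literally the same set $\ss$ of "incommensurable-truncation" initial segments, with the same minimum $S$.

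Finally, for $[\be]_\sme=\pi_S^{-1}(\be_S)$: the inclusion $\pi_S^{-1}(\be_S)\subseteq[\be]_\sme$ is Lemma~\ref{SameClass} applied twice — any $\ga$ with $\ga_S=\be_S$ satisfies $\ga_S$ incommensurable, so $\ga\sim_\sme(\ga_S\mid0)=(\be_S\mid0)\sim_\sme\be$. For the reverse inclusion, suppose $\ga\sim_\sme\be$ but $\ga_S\ne\be_S$; I would show this contradicts $\g$-equivalence via Lemma~\ref{criteri}, exactly as in the proof of Lemma~\ref{MinInequiv}: let $R=I_{<j}$ where $j=\min\supp(\ga-\be)\in S$ (it lies in $S$ since $\ga_S\ne\be_S$), note $R\subsetneq S$ so $\be_R=\ga_R$ is commensurable, pick $b\in\gq$ agreeing with them on $R$, pick $q\in Q_j$ strictly between $\be_j$ and $\ga_j$, and use the element $b+b_{j,q-b_j}\in\gq$ from Lemma~\ref{hahn} to produce an element of $\gq$ strictly between $\be$ and $\ga$, contradicting $\be\sim_\sme\ga$ by Lemma~\ref{criteri}. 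Hence $\ga_S=\be_S$, i.e.\ $[\be]_\sme\subseteq\pi_S^{-1}(\be_S)$, completing the proof.
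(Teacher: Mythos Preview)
Your approach is sound but contains a small slip in the intersection argument. You subtract $(b_S\mid 0)$ from $\be$, yet $(b_S\mid 0)$ need not lie in $\gq$; hence neither the claim ``$\be=(b_S\mid 0)$ is already in $\gq$'' nor the identification $\be_{T_{\la_0}}=(b_i)_{i\in T_{\la_0}}$ is justified (on $T_{\la_0}\setminus S$ the element $(b_S\mid 0)$ vanishes while $b$ may not). The fix is immediate: subtract $b$ itself. Then $(\be-b)_S=0$; if $\be=b$ you contradict incommensurability of $\be$ directly; otherwise $j=\min\supp(\be-b)\notin S$, so some $T_{\la_0}$ misses $j$, whence $T_{\la_0}\subseteq I_{<j}$ and $\be_{T_{\la_0}}=b_{T_{\la_0}}$ is genuinely commensurable with witness $b$, the desired contradiction.

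With that correction your existence argument works, and it is genuinely different from the paper's. The paper restricts attention to initial segments of the special form $I_{<j}$ with $j$ running over the well-ordered set $\supp(\be)\cup\{\infty\}$, and extracts the least such $j$ for which $\be_{I_{<j}}$ is incommensurable. You instead prove the stronger structural fact that the whole family $\ss$ is closed under arbitrary intersections, and then take $S=\bigcap\ss$. Your route is a touch longer but more conceptual, and it sidesteps the (implicit) verification that the minimum of $\ss$ must be of the shape $I_{<j}$ for some $j\in\supp(\be)\cup\{\infty\}$.

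For the remaining assertions your argument coincides with the paper's. The inclusion $\pi_S^{-1}(\be_S)\subseteq[\be]_\sme$ is exactly Lemma~\ref{SameClass}. For the reverse inclusion the paper passes to the minimal incommensurable representative $(\ga_T\mid 0)$ of $\ga$ and invokes Lemma~\ref{MinInequiv} to force $(\ga_T\mid 0)=(\be_S\mid 0)$; you instead reproduce the separating-element construction of Lemma~\ref{MinInequiv} directly on $\be$ and $\ga$. Either way is fine; the paper's version is shorter only because Lemma~\ref{MinInequiv} has already been established.
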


\begin{proof}
Let $J=\supp(\be)$, which is a well-ordered subset of $I$. Hence, $J\infty$ is a well-ordered subset of $I\infty$. Consider the subset
 $$
A=\left\{j\in J\infty\mid \be_{I_{<j}} \mbox{ is incommensurable}\right\}\subset J\infty.$$

Since $\be$ is incommensurable, $\infty\in A$, so that $A\ne\emptyset$. Thus, there exists $i=\min(A)$, and $S=I_{<i}$ is the minimal initial segment such that $\be_S$ is incommensurable.

By definition, $(\be_S\mid0)$ is a minimal incommensurable element. By Lemma \ref{SameClass},  $\be\sim_\sme (\be_S\mid0)$ and, more generally, $\pi_S^{-1}(\be_S)\subset [\be]_\sme$.

Finally, take any $\ga\in [\be]_\sme$. Let $T$ be the minimal initial segment of $I$ such that $\ga_T$ is incommensurable. Since $(\ga_T\mid0),\, (\be_S\mid0)\in [\be]_\sme=[\ga]_\sme$, Lemma  \ref{MinInequiv} shows that $T=S$ and $\ga_S=\be_S$. Hence, $\ga$ belongs to  $\pi_S^{-1}(\be_S)$.
\end{proof}


\noindent{\bf Definition.} 
We define the \emph{equal-rank closure} of $\g$ as the totally ordered set
$$
\grr=\gq\,\sqcup\,\eqr(\g)\subset\R^I_{\lx},
$$
which is a canonical system of representatives of $\rlex/\!\sim_\sme$.\footnote{While $\gq$ is defined as $\g\otimes_\Z\Q$, let us remark that $\grr$ has nothing to do with $\g\otimes_\Z\R$.}

\subsection*{Rational and irrational incommensurable classes in $\grr$}\mbox{\null}\e

\noindent{\bf Definition.} Let $\be\in\left(\rlex\right)^{\incom}$. The class $[\be]_\sme$ is said to be  \emph{rationally incommensurable} if  there exists $\ga\in\hq$ such that $\be\sim_\sme\ga$.

Otherwise, the class $[\be]_\sme$ is said to be \emph{irrationally incommensurable}.\e

Thus, we may split the set $\eqr(\g)$ into the disjont union of two subsets 
$$
\eqr(\g)=\eqrat(\g)\,\sqcup\,\eqri(\g),
$$
which represent the rationally and irrationally incommensurable classes, respectively.

It is easy to check that \ $\eqrat(\g)=\eqr(\g)\cap\hq$ \ is a system of representatives of $\ \left(\hq\setminus\gq\right)/\!\sim_\sme$.

Let us describe the subset $\eqri(\g)$ in more detail.

\begin{lemma}\label{Srat}
Let $\be=(\be_S\mid0)\in\eqr(\g)$ be a minimal incommensurable element in $\rlex$. The following conditions are equivalent.
\begin{enumerate}
\item $\be$ does not belong to $\hq$.
\item The initial segment $S$ contains a maximal element. 
\end{enumerate}
\end{lemma}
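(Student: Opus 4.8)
We want to show that a minimal incommensurable element $\be = (\be_S \mid 0) \in \eqr(\g)$ lies in $\hq$ if and only if the initial segment $S$ has no maximal element. Recall that $\hq = \left(\prod_{i \in I} Q_i\right)_{\lx} \subset \rlex$ consists of the elements whose support is well-ordered \emph{and} whose coordinates lie in the rational components $Q_i$ rather than in all of $\R$. The key structural fact I plan to use is that $\be = (\be_S \mid 0)$ means $\supp(\be) \subset S$, and that $\be$ is a \emph{minimal} incommensurable element, so $S$ is the smallest initial segment with $\be_S$ incommensurable; equivalently, for every initial segment $R \subsetneq S$, the truncation $\be_R$ is commensurable, i.e. there is $b \in \gq$ with $b_R = \be_R$.

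**The direction (2) $\Rightarrow$ (1), i.e. $S$ has a maximal element $\Rightarrow$ $\be \notin \hq$.** Suppose $S = I_{\le i}$ for some $i \in I$ (this is exactly what it means for an initial segment to have a maximal element, given the description in the excerpt). Set $R = I_{<i} = S \setminus \{i\}$. Then $R \subsetneq S$, so by minimality $\be_R = b_R$ for some $b \in \gq$. I claim $\be_i \notin Q_i$: if it were, then using Lemma \ref{hahn} I could produce $b_{i, \be_i - b_i} \in \gq$, and the element $b + b_{i, \be_i - b_i} \in \gq$ would have its coordinates in positions $\le i$ equal to those of $\be$; more precisely, its truncation at $S = I_{\le i}$ would equal $\be_S$, contradicting that $\be_S$ is incommensurable. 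Hence $\be_i \notin Q_i$, so $\be \notin \hq$ (an element of $\hq$ has all coordinates in the $Q_j$'s).

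**The direction (1) $\Rightarrow$ (2), i.e. $\be \notin \hq \Rightarrow$ $S$ has a maximal element.** I prove the contrapositive: if $S$ has no maximal element, then $\be \in \hq$. First, $\supp(\be) \subset S$ is automatically well-ordered, so I only need to verify that every coordinate $\be_j$ lies in $Q_j$. Fix $j \in S$. Since $S$ has no maximal element, there is $j' \in S$ with $j' > j$; then $R := I_{<j'}$ is an initial segment with $j \in R$ and $R \subsetneq S$ (because $j' \in S \setminus R$). By minimality of $S$, $\be_R$ is commensurable: $\be_R = b_R$ for some $b \in \gq$. In particular $\be_j = b_j \in Q_j$. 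Since $j \in S$ was arbitrary, all coordinates of $\be$ lie in the rational components, so $\be \in \hq$.

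**Expected main obstacle.** The routine bookkeeping is identifying "$S$ has a maximal element" with "$S = I_{\le i}$" and tracking which truncations are forced to be commensurable by minimality — none of this is deep, but it needs the correct reading of the table and of Lemma \ref{MinExists}, where $S = I_{<i}$ with $i = \min(A)$; one must be careful that here $S$ \emph{can} be of the form $I_{\le i'}$ when $i$ has an immediate predecessor $i'$ in $J\infty$ but not in $I\infty$, which is exactly the subtle case. The one genuinely substantive step is the use of Lemma \ref{hahn} in the $(2) \Rightarrow (1)$ direction to upgrade "$\be_R$ commensurable and $\be_i \in Q_i$" into "$\be_S$ commensurable", contradicting incommensurability of $\be_S$; I expect that to be the crux, though it is short once Lemma \ref{hahn} is invoked correctly with index $i$ and value $\be_i - b_i$.
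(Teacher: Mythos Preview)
Your proof is correct and follows essentially the same approach as the paper: both directions use the minimality of $S$ to see that proper truncations are commensurable, and both invoke Lemma~\ref{hahn} to derive the contradiction in the $(2)\Rightarrow(1)$ direction. The only cosmetic difference is that you argue $(1)\Rightarrow(2)$ by contrapositive (every coordinate lies in some commensurable truncation, hence in $Q_j$), whereas the paper argues directly by taking $i=\min\{j\in\supp(\be)\mid \be_j\notin Q_j\}$ and showing $I_{\le i}=S$; these are the same idea viewed from opposite ends.
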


\begin{proof}
Suppose that  $\be=(\be_j)_{j\in I}\not\in\hq$, and let $J=\supp(\be)$. By our assumption, $$J^0:=\{j\in J\mid \be_j\not\in Q_j\}\ne\emptyset.$$
Since $J$ is well-ordered, there exists $i=\min\left(J^0\right)$. Since $\be_{i}\ne0$, we have $i\in S$. 

Let  $R=I_{\le i}\subset S$. 
Since $\be_R$ is incommensurable, 
we must have $R=S$ by the minimality of $S$. Hence, $i=\max(S)$.

Conversely, suppose that $S$ contains a maximal element $i$, and let $T=I_{<i}\subsetneq S$. By hypothesis, $\be_{T}$ is commensurable, so that there exists $b\in\gq$ such that $\be_{T}=b_{T}$. In particular, $\be_j\in Q_j$ for all $j<i$. This implies that $\be_{i}\not\in Q_{i}$, so that $\be$ does not belong to $\hq$. 
Indeed, if $\be_{i}\in Q_{i}$, we may consider the element  $c=b_{i,\be_{i}-b_{i}}\in\gq$ described in Lemma \ref{hahn}. The element $b+c\in\gq$ satisfies $(b+c)_S=\be_S$, which is a contradiction.
\end{proof}

By Lemma \ref{Srat}, we have $\eqri(\g)=\bigcup\nolimits_{i\in I}\eqri(\g)_i$, where
$$
\eqri(\g)_i:=\left\{\left(b\mid q\mid0\right)\mid b\in \R^{I_{<i}}_{\lx}\mbox{ commensurable},\ q\in\R\setminus Q_i\right\}.
$$

\noindent{\bf Caution!} The set $\eqri(\g)$ does not represent all classes in $\left(\rlex\setminus\hq\right)/\!\sim_\sme$. In this latter set we may have rationally incommensurable classes. That is, there may exist elements $\be\in \rlex\setminus\hq$ such that the class $[\be]_\sme$ contains elements in $\hq$.\e

Before giving some examples, let us emphasize a relevant observation, which is an immediate consequence of the fact that $\gq=\hq$ if $\g$ has finite rank. 

\begin{lemma}\label{FiniteRank}
If $\rk(\g)$ is finite, then \ $\eqrat(\g)=\emptyset$.  
\end{lemma}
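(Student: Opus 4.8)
The statement to prove is Lemma \ref{FiniteRank}: if $\rk(\g)$ is finite, then $\eqrat(\g)=\emptyset$.

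The plan is to reduce everything to the observation flagged right before the lemma: when $\g$ has finite rank, the Hahn product $\hq$ coincides with its divisible hull $\gq$. Once this is in hand, the conclusion is almost formal. Recall that $\eqrat(\g)$ was identified earlier in the excerpt as a system of representatives of $\left(\hq\setminus\gq\right)/\!\sim_\sme$; if $\hq=\gq$, then $\hq\setminus\gq=\emptyset$, so this quotient is empty and hence $\eqrat(\g)=\emptyset$. Alternatively, one can argue directly from the definition: $\eqrat(\g)=\eqr(\g)\cap\hq$, and every element of $\eqr(\g)$ is by construction incommensurable over $\g$, hence not in $\gq$; so if $\hq=\gq$ there is no room for such an element inside $\hq$.

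So the real content is the claim $\gq=\hq$ when $\rk(\g)<\infty$. Here is how I would see this. Since $\rk(\g)=\rk(\gq)$ is finite, the ordered set $I=\pcv(\g)=\pcv(\gq)$ is finite (by Lemma \ref{ConvI}, $I$ embeds into $\cv\!\infty$, which is finite). A finite totally ordered set is well-ordered, and moreover every subset of a finite set is well-ordered. Therefore in the definition of the Hahn product $\hq=\left(\prod_{i\in I}Q_i\right)_{\lx}$, the well-ordered-support condition is vacuous: every element of $\prod_{i\in I}Q_i$ already has well-ordered (indeed finite) support, so $\hq=\prod_{i\in I}Q_i$ — the full product, which for a finite index set is just the finite direct sum $\coprod_{i\in I}Q_i=\H(\gq)$'s Hahn sum. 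Now Hahn's immediate embedding $\gq\hk\hq$ of the excerpt is, by construction, an embedding whose image lies in $\hq$ and which preserves the skeleton; since it is surjective onto the Hahn sum (the Hahn sum $\coprod_{i\in I}Q_i$ is the minimal subgroup with the prescribed skeleton, and $\gq$, being divisible with that skeleton, maps onto it), and the Hahn sum equals the Hahn product here, the embedding $\gq\hk\hq$ is an isomorphism. Hence $\gq=\hq$ (under this identification).

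I expect the only slightly delicate point to be pinning down exactly why the immediate embedding $\gq\hk\hq$ is surjective when $I$ is finite — i.e. why $\varphi(\gq)$ exhausts $\coprod_{i\in I}Q_i$. This follows because $\varphi$ is an immediate embedding: it induces isomorphisms $C_i(\gq)\iso C_i(\hq)$ on all components and on the principal rank, and an immediate embedding between groups of the same finite rank whose common skeleton has finite index set must be onto, since one can lift elements component by component along the finitely many convex layers using the splittings chosen via Banaschewski's lemma. Everything else is bookkeeping. I would therefore structure the proof as: (1) note $I$ finite $\Rightarrow$ the support condition defining $\hq$ is vacuous $\Rightarrow$ $\hq=\prod_{i\in I}Q_i=\coprod_{i\in I}Q_i$; (2) note Hahn's embedding $\gq\hk\hq$ is immediate between equal-rank groups with finite skeleton, hence an isomorphism, so $\gq=\hq$; (3) conclude $\eqrat(\g)=\eqr(\g)\cap\hq=\eqr(\g)\cap\gq=\emptyset$ since $\eqr(\g)\subset\left(\rlex\right)^{\incom}$ is disjoint from $\gq$.
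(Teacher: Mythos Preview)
Your proposal is correct and follows exactly the paper's approach: the paper states the lemma as ``an immediate consequence of the fact that $\gq=\hq$ if $\g$ has finite rank'' and gives no further argument, so your reduction to this identity is precisely what the author has in mind. Your additional justification of $\gq=\hq$ (via finiteness of $I$, collapse of the Hahn product to the finite direct sum, and surjectivity of the immediate embedding by lifting through the finitely many convex layers) is sound and simply fills in detail that the paper leaves implicit.
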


\subsection*{Examples}\mbox{\null}\e

(0) \ $\g=\{0\}$.
$$I=\emptyset,\quad \gq=\grr=\{0\}.$$

(1) \ $\rk(\g)=1$.
$$
\eqrat=\emptyset,\qquad \eqri=\R\setminus\gq,\qquad \grr=\R.
$$

(2) \ $\g=\R^2_{\lx}$.
$$
\eqrat=\eqri=\emptyset,\qquad \grr=\g.
$$

(3) \ $\g=\Q^2_{\lx}$.
$$
\eqrat=\emptyset,\qquad \eqri=\left\{(x,0)\in\R^2\mid x\in\R\setminus\Q\right\}\,\sqcup\,\left(\Q\times(\R\setminus\Q)\right).
$$
$$
\grr=\R^2_{\lx}\setminus\left\{(x,y)\in\R^2\mid x\not\in\Q,\ y\ne0\right\}.
$$

(4) \ $\g=\R^{(\N)}$.
$$
\eqrat=\R^{\N}\setminus \R^{(\N)},\qquad \eqri=\emptyset,\qquad \grr=\R^{\N}_{\lx}=\R^{\N}.
$$

(5) \ $\g=\Q^{(\N)},\qquad \rlex=\R^{\N}$.
$$
\eqrat=\Q^{\N}\setminus \Q^{(\N)},\qquad \eqri=\bigcup\nolimits_{i\in\N}\left(\Q^{i-1}\times \left(\R\setminus\Q\right)\times \{0\}^{\N_{>i}}\right).
$$
$$
\grr=\Q^{\N}\,\sqcup\,\eqri.
$$



\subsection{One-added-element embeddings of ordered sets}\label{secOneAdd}\mbox{\null}\e

\noindent{\bf Definition.} An embedding of (totally) ordered sets $$\iota\colon I\hra J$$ is a \emph{one-added-element} embedding if  $J\setminus \iota(I)$ is a one-element subset of $J$.\e

Let $I$ be an ordered set. For any $S\in\inii$, consider the ordered set 
\begin{equation}\label{IS}
I_S=S+\{i_S\}+S^c,
\end{equation}
where $S^c=I\setminus S$ is the complementary subset of $S$ in $I$.

The natural embedding $I\hk I_S$ is a one-added-element embedding. Also, every one-added-element embedding $I\hk J$ is isomorphic to $I_S$ for a unique $S\in\inii$.

More precisely, there is a unique $S\in\inii$ and a unique isomorphism $I_S\ism J$ of odered sets, fitting into a commutative diagram
$$
\as{1.2}
\begin{array}{ccc}
I_S&&\\
\uparrow&\searrow&\\
I&\hra&J
\end{array}
$$

For instance, for $I=J=\N$ and the one-added-embedding
$$\N\hra\N,\qquad n\longmapsto n+1,$$
we have $S=\emptyset$, $I_S=\{0\}+\N$ and the isomorphism $I_S\ism J$ maps $n\mapsto n+1$ for all $n\ge0$.  

\subsection*{Universal construction}

Let $I$ be an ordered set. Consider the set
$$
\I:=I\cup\left\{i_S\mid S\in\inii\right\}.
$$
We may consider a natural total ordering determined by

\begin{enumerate}
\item For all $S\in\inii$, the restriction of the ordering to $I_S=I\cup\{i_S\}$ is the ordering considered in (\ref{IS}). 
\item $i_S<i_T\ \sii\ S\subsetneq T,\quad $for all $S,T\in\inii$.
\end{enumerate}\e

This ordered set is called the \emph{one-added-element hull} of $I$. It satisfies an obvious universal property.

\begin{lemma}\label{OAEUniv}
For any  one-added-element embedding $I\hk J$ of ordered sets, there exists a unique embedding $J\hk \I$ fitting into a commutative diagram
$$
\as{1.2}
\begin{array}{ccc}
J&&\\
\uparrow&\searrow&\\
I&\hra&\I
\end{array}
$$
The image of $J$ in $\I$ is $I_S$ for a unique $S\in\inii$.
\end{lemma}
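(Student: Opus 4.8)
The plan is to prove Lemma \ref{OAEUniv} by unwinding the universal property from the classification of one-added-element embeddings established just above the statement. Given a one-added-element embedding $\iota\colon I\hk J$, I first invoke the preceding discussion: there is a unique $S\in\inii$ and a unique isomorphism $\psi\colon I_S\ism J$ of ordered sets commuting with the inclusions $I\hk I_S$ and $I\hk J$. So it suffices to produce, for each $S\in\inii$, a unique embedding $I_S\hk\I$ commuting with $I\hk I_S$ and $I\hk\I$, and then compose with $\psi^{-1}$.

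The natural candidate is the map that is the identity on $I$ and sends the single extra point $i_S$ of $I_S$ to the element $i_S\in\I$. I would check this is an embedding of ordered sets: by definition of the ordering on $\I$, the restriction to $I_S=I\cup\{i_S\}$ is exactly the ordering of (\ref{IS}), so the map strictly preserves order. It obviously commutes with the two inclusions of $I$. For uniqueness, suppose $f\colon I_S\hk\I$ is any embedding with $f|_I=$ inclusion. Then $f(i_S)$ is an element of $\I$ not in $I$ (since $f$ is injective and already hits all of $I$ on $I$), hence $f(i_S)=i_T$ for some $T\in\inii$; and the requirement that $f$ preserve order forces $\{i\in I\mid i<f(i_S)\}=S$, i.e. the initial segment of $I$ below $i_T$ equals $S$. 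But the initial segment of $I$ lying below $i_T$ in $\I$ is precisely $T$ (this is how the ordering on $\I$ was defined), so $T=S$ and $f(i_S)=i_S$. Thus $f$ is the candidate map.

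Combining: define $I\hk J\hk\I$ as $\psi^{-1}$ composed with the above; it is an embedding, commutes with $I\hk\I$, and its image is $\psi^{-1}(I_S)$'s image, namely $I_S\subset\I$ — more precisely the image of $J$ in $\I$ is $I_S$ for this same unique $S$, which gives the last sentence of the statement. Uniqueness of the embedding $J\hk\I$ follows from uniqueness of $\psi$ together with the uniqueness established in the previous paragraph: any embedding $J\hk\I$ over $I$ pulls back along $\psi$ to an embedding $I_S\hk\I$ over $I$, which must be the candidate.

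The only genuinely delicate point is bookkeeping the two separate uniqueness claims and making sure the element of $J\setminus\iota(I)$ is correctly matched with the index $i_S$ whose "initial segment below it" is $S$ — i.e. that the defining clauses (1) and (2) of the ordering on $\I$ really do pin down $i_S$ among all the $i_T$. I expect this to be the main (and only) obstacle, and it is essentially a direct appeal to how $\I$ was constructed rather than anything requiring new ideas; everything else is formal transport of structure along the isomorphism $I_S\ism J$.
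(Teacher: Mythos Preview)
Your argument is correct and is exactly the verification the paper has in mind: it states the lemma without proof, calling it an ``obvious universal property'' following the classification of one-added-element embeddings via the sets $I_S$. Your write-up simply makes explicit the two uniqueness checks (for $S$ and for the embedding $I_S\hk\I$) that the paper leaves to the reader.
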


\subsection{Small extensions that increase the rank by one}\label{secLargerRank}

We keep with the notation of sections \ref{secSameRank} and \ref{secOneAdd}.

\begin{lemma}\label{SemiUniv}
If the extension  $\g\hk\La$ increases the rank by one, there is a unique $S\in\inii$ and an embedding $\La\hk\R^{I_S}_{\lx}$ fitting into  commutative diagram:
$$
\as{1.4}
\begin{array}{ccccccc}
&&\La&&&&\\
&\nearrow&&\searrow&&&\\
\g&\hra&\rlex&\hra&\R^{I_S}_{\lx}&\hra&\rii
\end{array} 
$$
\end{lemma}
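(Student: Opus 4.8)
The plan is to reduce the statement to the rank-preserving case already handled in Lemma \ref{MaxEqRk}, combined with the one-added-element machinery of Section \ref{secOneAdd}. First I would analyze the embedding $\pcv(\g)=I\hra J=\pcv(\La)$ induced by $\iota$ (via Lemma \ref{rkEmbed}). By hypothesis $\sharp(\pcv(\La)\setminus\pcv(\g))=1$, so this is a one-added-element embedding. By Lemma \ref{OAEUniv}, there is a unique $S\in\inii$ and a unique isomorphism $I_S\ism J$ of ordered sets compatible with the inclusion of $I$; this is the $S$ that appears in the statement, and it pins down the shape $\R^{I_S}_{\lx}$ on the right.

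Next I would produce the embedding $\La\hra\R^{I_S}_{\lx}$. The natural approach is to run the functoriality of Hahn's embedding (Lemma \ref{compatible}) on $\g\hk\La$: this gives a commutative diagram
$$
\as{1.4}
\begin{array}{ccccccc}
\La&\lra&\La_\Q&\lra&\H(\La_\Q)&\lra&\R^J_{\lx}\\
\uparrow&&\uparrow&&\uparrow&&\uparrow\\
\g&\lra&\gq&\lra&\hq&\lra&\rlex
\end{array}
$$
Composing the top row gives an embedding $\La\hra\R^J_{\lx}$, and transporting along the fixed isomorphism $J\simeq I_S$ yields $\La\hra\R^{I_S}_{\lx}$. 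The left-hand triangle $\g\hra\rlex\hra\R^{I_S}_{\lx}$ of the desired diagram is exactly the bottom row of this square followed by the inclusion $\R^I_{\lx}\hra\R^{I_S}_{\lx}$ induced by $I\hk I_S$ (a zero-padding map as in (\ref{rirj})), and commutativity of the outer square guarantees the two routes $\g\to\R^{I_S}_{\lx}$ agree. The right-hand inclusion $\R^{I_S}_{\lx}\hra\rii$ is then the one induced by $I_S\hra\I$ of Lemma \ref{OAEUniv}, again zero-padding, and the whole diagram commutes by construction.

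Finally I would argue uniqueness of $S$. If $\La\hra\R^{I_T}_{\lx}$ extends $\g\hra\rlex$ for some $T\in\inii$, then restricting to $\pcv$ and using that an embedding of ordered groups induces an embedding of principal-rank posets (Lemma \ref{rkEmbed}) together with the way $\R^{I_T}_{\lx}$ has principal-rank poset exactly $I_T$, we recover a one-added-element embedding $I\hra\pcv(\La)$ whose target is $I_T$; by the uniqueness clause of Lemma \ref{OAEUniv} (or of the remark right after (\ref{IS})), $T$ is forced to equal $S$. The uniqueness of the new convex subgroup as an element of $\cv\infty$ translates into uniqueness of the cut $S$.

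The main obstacle I expect is the bookkeeping in the middle step: verifying that the chosen Hahn embeddings and the identification $J\simeq I_S$ can be made simultaneously coherent so that the composite square genuinely commutes, i.e.\ that the zero-padding inclusion $\R^I_{\lx}\hra\R^{I_S}_{\lx}$ is compatible with the right-hand vertical map $\R^J_{\lx}$ coming from Lemma \ref{compatible} under $J\simeq I_S$. This is where one must be careful that the unique new index $i_S\in I_S$ is sent to the unique index in $J\setminus I$, and that the component $C_{i_S}$ of $\La$ really does embed into $\R$ via a compatible choice of positive element; all of this is routine given the functoriality already established, but it is the step where the argument could silently go wrong. The uniqueness of the embedding $\La\hra\R^{I_S}_{\lx}$ itself is \emph{not} claimed (and indeed fails, just as in the Caution after Lemma \ref{MaxEqRk}), so no effort is spent there.
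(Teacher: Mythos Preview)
Your proposal is correct and follows essentially the same route as the paper. The paper's proof is a two-line sketch: it fixes $S$ by the condition $\pcv(\La)\simeq I_S$ and then declares the rest immediate from Lemma~\ref{OAEUniv}; you have simply unpacked what ``immediate'' means, namely running Lemma~\ref{compatible} to obtain $\La\hra\R^J_{\lx}$ and transporting along the identification $J\simeq I_S$, exactly as in the proof of Lemma~\ref{MaxEqRk}.
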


\begin{proof}
The initial segment $S$ is uniquely determined by the condition $\pcv(\La)\simeq I_S$.

Then, the proof  follows immediately from Lemma \ref{OAEUniv}.
\end{proof}

\noindent{\bf Caution!} For all $S\in\inii$, the subextensions of $\g\hk \R^{I_S}_{\lx}$ increase the rank at most by one. However, the extension $\g\hk \rii$ admits subextensions yielding a much larger increase of the rank. We consider the ordered group $\rii$ only to make it clear that the union of ordered groups 
$$\bigcup\nolimits_{S\in\inii}\R^{I_S}_{\lx}\,\subset\, \rii$$
has a natural total ordering.\e

This section is devoted to classify under the equivalence relation $\sim_\sme$ the elements $\ \be\in \bigcup\nolimits_{S\in\inii}\R^{I_S}_{\lx}\ $ such that the extension $\g\subset \ggb$ increases the rank by one.

Note that the concept of minimal incommensurable element and Lemmas \ref{SameClass}, \ref{MinExists} hold in our larger groups $\R^{I_S}_{\lx}$.







\begin{lemma}\label{CritIncr}
Let $\be=(\be_j)_{j\in I}\in \R^{I_S}_{\lx}$. The following conditions are equivalent.
\begin{enumerate}
\item The extension $\g\hk\ggb$ increases the rank by one.
\item The minimal incommensurable element in $[\be]_\sme$ does not belong to $\rlex$.
\item The minimal incommensurable element in $[\be]_\sme$ is $\left(\be_{S+\{i_S\}}\mid0\right)$.
\item The subgroup $\ggb$ contains an element $\ga=(\ga_i)_{i\in I_S}$ such that $$\ga_{i_S}\ne0,\quad\mbox{ and }\quad\ga_i=0 \ \mbox{ for all }i\in S.$$
\end{enumerate}
\end{lemma}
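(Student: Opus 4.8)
The plan is to prove the four equivalences by going around the cycle $(4)\Rightarrow(3)\Rightarrow(2)\Rightarrow(1)\Rightarrow(4)$, since $(3)\Rightarrow(2)$ is immediate from the fact that $\R^{I_S}_{\lx}\setminus\rlex$ is exactly the set of tuples with nonzero $i_S$-coordinate. Throughout I will write $\be=(\be_S\mid \be_{i_S}\mid \be_{S^c})$ according to the decomposition $I_S=S+\{i_S\}+S^c$, and I will freely use the criterion of Lemma~\ref{criteri} together with the fact (proved via Lemmas~\ref{SameClass} and~\ref{MinExists}, which the excerpt says carry over to the groups $\R^{I_S}_{\lx}$) that the minimal incommensurable element of a class is obtained by truncating $\be$ at the least initial segment $T$ of $I_S$ with $\be_T$ incommensurable.

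First I would establish the key algebraic translation: the extension $\g\hk\ggb$ increases the rank by one precisely when $\pcv(\ggb)$ acquires a new principal convex subgroup strictly below the one generated by the image of $\g$ in the $i_S$-slot, equivalently when $\ggb$ contains an element supported on $S^c\cup\{i_S\}$ with nonzero $i_S$-coordinate and zero on all of $S$ --- which is exactly condition~(4), since such an element is incommensurable and generates a principal convex subgroup of $\R^{I_S}_{\lx}$ whose index $i_S$ lies in $I_S\setminus I$. For $(1)\Rightarrow(4)$: if the rank increases by one, then by inequality~(\ref{rrrk}) we have $\sharp(\pcv(\ggb)\setminus\pcv(\g))=1$, and the new index must be $i_S$ (it is the only element of $I_S$ not coming from $I$); picking any $\ga\in\ggb$ generating the principal convex subgroup $H_{i_S}\cap\ggb$, i.e. any $\ga$ with $\ga_{i_S}\ne0$ and $\ga_j=0$ for $j<i_S$ (in particular for all $j\in S$), gives~(4). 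Conversely $(4)\Rightarrow(1)$ because such a $\ga$ exhibits a principal convex subgroup of $\ggb$ with index $i_S\notin I$, so $\sharp(\pcv(\ggb)\setminus\pcv(\g))\ge1$, while Lemma~\ref{small<=1} forces equality.

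Next, for $(2)\Rightarrow(1)$ and $(1)\Rightarrow(3)$: if the minimal incommensurable element $\ga=(\ga_T\mid0)$ of $[\be]_\sme$ has $T\ni i_S$, then $\ga$ itself is the element required in~(4) (being supported inside $S\cup\{i_S\}$ but incommensurable, with $\ga_{i_S}\ne0$ by minimality of $T$, since $\ga_{I_{<i_S}}=\ga_S$ would otherwise already be incommensurable against the commensurability forced below) --- here I would need the small observation that when $i_S\in T$ the truncation to $S=I_{<i_S}$ is commensurable by minimality, which is precisely the statement of~(3) that the minimal element is $(\be_{S+\{i_S\}}\mid0)$; and conversely, if the rank increases by one then by the already-established $(1)\Leftrightarrow(4)$ there is an element of $\ggb$ of the stated form, and applying Lemma~\ref{criteri}-style truncation (Lemma~\ref{MinExists} in $\R^{I_S}_{\lx}$) to $\be$ one sees the minimal incommensurable representative must pick up the $i_S$-coordinate, i.e. is $(\be_{S+\{i_S\}}\mid0)\notin\rlex$, giving both~(3) and~(2).

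The main obstacle I anticipate is the careful bookkeeping in the step $(1)\Leftrightarrow(4)$ --- specifically verifying that the single new principal convex subgroup appearing in $\pcv(\ggb)$, when the rank goes up by one, is forced to sit at the index $i_S$ and not somewhere inside $S$ or $S^c$. This needs the explicit description of principal convex subgroups of a Hahn product (the $H_i=\{(a_j):a_j=0\text{ for }j<i\}$ from the ``prefixed skeleton'' discussion) together with the fact that the embedding $\g\hk\rlex\hk\R^{I_S}_{\lx}$ identifies $\pcv(\g)$ with $I\subset I_S$; once that identification is pinned down, an element of $\ggb$ contributes a new index exactly when its support ``starts at'' $i_S$, which is condition~(4). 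The rest is routine truncation arguments with Lemma~\ref{criteri}.
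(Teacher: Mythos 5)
Your overall architecture is sound, and your treatment of $(1)\Leftrightarrow(4)$ together with $(4)\Rightarrow(3)\Rightarrow(2)$ essentially reproduces the paper's argument (the paper runs the cycle $(1)\Rightarrow(2)\Rightarrow(3)\Rightarrow(4)\Rightarrow(1)$, with $(4)\Rightarrow(1)$ being the same principal-convex-subgroup computation you sketch). The genuine gap is where you close your cycle, in the step from $(2)$ back to $(1)$/$(4)$: you assert that the minimal incommensurable element $\ga=(\be_{S+\{i_S\}}\mid 0)$ of $[\be]_\sme$ ``is the element required in (4).'' It is not. Condition $(4)$ demands an element \emph{of the subgroup} $\ggb$ whose coordinates \emph{vanish on all of} $S$ and whose $i_S$-coordinate is nonzero. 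The minimal incommensurable element fails both requirements in general: it carries the coordinates $\be_i$ for $i\in S$ (you yourself describe it as ``supported inside $S\cup\{i_S\}$,'' which is not the same as having support starting at $i_S$), and it need not lie in $\ggb$ at all, since it is only $\g$-equivalent to $\be$, not an element of $\gen{\g,\be}$. For a concrete failure take $\g=\Z$, $I=\{1\}$, $S=I$, $\be=(1/2,1)$: the minimal incommensurable element is $\be$ itself, which does not vanish on $S$; the correct witness for $(4)$ is $(0,2)=2\be-(1,0)$.

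The repair is precisely the content of the paper's implication $(3)\Rightarrow(4)$, which your proposal skips: by minimality of $S+\{i_S\}$, the truncation $\be_S$ is commensurable (you do note this), so one may choose $b\in\gq$ with $b_S=\be_S$; then $\be-b$ vanishes on $S$, has $i_S$-coordinate $\be_{i_S}\ne 0$, and lies in $\gen{\gq,\be}$; finally one multiplies by a suitable $m\in\N$ to land in $\ggb$. This subtract-and-rescale step is not bookkeeping --- it is the actual mathematical content connecting conditions $(2)$--$(3)$ to condition $(4)$, and without it your cycle does not close.
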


\begin{proof}
Let $T\subset I_S$ be the initial segment such that $(\be_T\mid0)$ is the minimal incommensurable element in $[\be]_\sme$. Denote $S'=S+\{i_S\}$.\e

(1) $\Rightarrow$ (2).  By Lemma \ref{SameClass}, the condition  $(\be_T\mid0)\in\rlex$ implies that the group $\ggb\simeq\gen{\g,(\be_T\mid0)}$ does not increase the rank. This contradicts (1).\e

(2) $\Rightarrow$ (3). The condition $(\be_T\mid0)\not\in\rlex$ is equivalent to $\be_{i_S}\ne0$. Thus, $i_S\in T$ and $S'\subset T$. Since $\be_{S'}$ is incommensurable, we have $T=S'$ by the minimality of $T$.\e

(3) $\Rightarrow$ (4). Since $i_S=\max(S')$, the minimality of $S'$ implies $\be_{i_S}\ne0$ and $\be_S$ commensurable. Let $b\in\gq$ such that $\be_S=b_S$.
The element $\ga=\be-b$ belongs to $\gen{\gq,\be}$ and satisfies the conditions of (4). Hence, there exists $m\in\N$ such that $m\ga$ belongs to $\ggb$ and satisfies the conditions of (4). \e

(4) $\Rightarrow$ (1). The chain of non-zero principal convex subgroups of $\rlex$ is 
$$
\left(H_i\right)_{i\in I},\qquad H_i=\left\{(\be_j)_{j\in I}\mid \be_j=0,\;\forall\,j<i\right\}.
$$
The chain of non-zero  principal convex subgroups of $\g$ is $\left(H_i\cap\g\right)_{i\in I}$. These groups induce the chain $\left(H_i\cap\ggb\right)_{i\in I}$ of non-zero principal subgroups of $\ggb$.

Let $H$ be the principal subgroup of $\R^{I_S}_{\lx}$ generated by an element $\ga\in \ggb$ satisfying (4). We claim that $H\cap\ggb$ is  a new principal subgroup of $\ggb$. Indeed, the condition $\ga_{i_S}\ne0$ implies that  $\ga$ does not belong to the smaller subgroup  $\bigcup_{i> i_S}\left(H_{i}\cap\ggb\right)$. The condition  $\ga_i=0$ for all $i\in S$ implies that $\ga$ does not generate any larger subgroup $H_{i}\cap\ggb$ for $i\in S$.
\end{proof}

We may now proceed to compute a system of representatives of the subset of $\ \R^{I_S}_{\lx}/\!\sim_\sme\ $ formed by the classes that increase the rank.

By Lemma \ref{CritIncr}, the minimal incommensurable elements in these classes are
$$
\be=\be_{S,a,q}=\left(a\mid q\mid 0\right),\qquad a\in\R^S_{\lx} \mbox{ commensurable},\quad q=\be_{i_S}\in \R^*,
$$
for an arbitrary $S\in\inii$.

Nevertheless, we cannot proceed as in section \ref{secSameRank} because Lemma \ref{MinInequiv} fails. There are minimal incommensurable elements in $\R^{I_S}_{\lx}\setminus \rlex$ which are equivalent. 

\begin{lemma}\label{MinEquiv}
We have $\ \be_{S,a,q}\sim_\sme\be_{T,b,p}$ \ if and only if
\begin{equation}\label{pq>0}
S=T, \qquad a=b,\qquad pq>0. 
\end{equation}
\end{lemma}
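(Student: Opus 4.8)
The plan is to prove both implications of Lemma~\ref{MinEquiv} using the criterion of Lemma~\ref{criteri}, since all the elements $\be_{S,a,q}$ involved are incommensurable over $\g$. The key preliminary observation is that an element of the form $\be_{S,a,q}=(a\mid q\mid 0)$ with $q\ne 0$ lives in $\R^{I_S}_{\lx}$, so that two such elements $\be_{S,a,q}$ and $\be_{T,b,p}$ lie in a common ambient group only after identifying $\R^{I_S}_{\lx}$ and $\R^{I_T}_{\lx}$ inside $\rii$ via Lemma~\ref{SemiUniv}; this forces us to keep careful track of the index sets. Throughout, recall that the commensurable elements of $\rlex$ (resp.\ of $\R^{I_S}_{\lx}$) over $\g$ are exactly those in $\gq$ (resp.\ those whose $i_S$-coordinate vanishes and whose $I$-part lies in $\gq$, by the analogue of Lemma~\ref{Srat} noted after Lemma~\ref{SemiUniv}).

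\textbf{Sufficiency.} Suppose $S=T$, $a=b$, and $pq>0$. Then $\be_{S,a,q}$ and $\be_{S,a,p}$ differ only in the single coordinate $i_S$, where they carry values $q$ and $p$ of the same sign. Assume $0<q<p$ (the other orderings being symmetric), so $\be_{S,a,q}<\be_{S,a,p}$. I would show there is no commensurable $c\in\gq\subset\R^{I_S}_{\lx}$ with $\be_{S,a,q}<c<\be_{S,a,p}$: any such $c$ has $c_{i_S}=0$, hence $c_{i_S}<q=(\be_{S,a,q})_{i_S}$; comparing in the lexicographic order on $\R^{I_S}_{\lx}$, the first coordinate where $c$ and $\be_{S,a,q}$ differ lies in $S$ (since they already disagree at $i_S$ at the latest, but could disagree earlier within $S$), and in either case the comparison $\be_{S,a,q}<c$ would force $c$ to exceed $\be_{S,a,q}$ on its $S$-part or tie on $S$ and then $c_{i_S}=0<q$ gives $c<\be_{S,a,q}$, a contradiction. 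Conclude $\be_{S,a,q}\sim_\sme\be_{S,a,p}$ by Lemma~\ref{criteri}.

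\textbf{Necessity.} Conversely, suppose $\be_{S,a,q}\sim_\sme\be_{T,b,p}$. By Lemma~\ref{CritIncr}, both extensions $\g\hk\gen{\g,\be_{S,a,q}}$ and $\g\hk\gen{\g,\be_{T,b,p}}$ increase the rank by one, and by parts (2)--(3) of that lemma the common class $[\be_{S,a,q}]_\sme=[\be_{T,b,p}]_\sme$ has a \emph{unique} minimal incommensurable element; but $\be_{S,a,q}$ and $\be_{T,b,p}$ are both minimal incommensurable, forcing the initial segments and the commensurable parts to agree, i.e.\ $S=T$ and $a=b$. It remains to rule out $pq<0$: if $q>0>p$ (say), then $\be_{S,a,p}<\be_{S,a,q}$, and I would exhibit a commensurable element strictly between them. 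Indeed $\be_{S,a,p}<\iota_S(a)<\be_{S,a,q}$ where $\iota_S(a)=(a\mid 0\mid 0)$ — but $\iota_S(a)$ is exactly the commensurable element coming from $a$ itself (recall $a\in\R^S_{\lx}$ is commensurable, so $a=b_S$ for some $b\in\gq$, and the element $(a\mid 0)\in\R^{I_S}_{\lx}$ is $\gq$-commensurable). To separate strictly one uses Lemma~\ref{hahn}: pick $i\in S$ or just use that between $p<0$ and $q>0$ the $i_S$-coordinate can be set to $0$, producing $c\in\gq$ with $c_{S}=a$, $c_{i_S}=0$, hence $\be_{S,a,p}<c<\be_{S,a,q}$. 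By Lemma~\ref{criteri}, $\be_{S,a,q}\not\sim_\sme\be_{S,a,p}$, a contradiction; therefore $pq>0$.

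\textbf{The main obstacle} I expect is bookkeeping rather than conceptual: making the comparisons in $\R^{I_S}_{\lx}$ rigorous requires being careful that the minimal-incommensurable normal form pins down $S$ \emph{before} one can even speak of comparing $q$ with $p$, and that the "obvious" separating element really lies in $\gq$ and really sits strictly between the two elements in the lexicographic order — the case analysis on where coordinates first differ (inside $S$ versus at $i_S$) is where a hasty argument could go wrong. Invoking Lemma~\ref{CritIncr}(2)--(3) to get $S=T$ and $a=b$ for free is what keeps this manageable; the only genuine work left is the sign condition $pq>0$, which reduces to the two short lexicographic comparisons sketched above.
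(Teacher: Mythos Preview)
Your sufficiency argument and your treatment of the sign condition $pq>0$ in the necessity direction are essentially the paper's: any element strictly between $\be_{S,a,q}$ and $\be_{S,a,p}$ must share the $S$-part $a$ and have $i_S$-coordinate strictly between $q$ and $p$, hence nonzero, hence incommensurable; and if $q<0<p$ then any $c\in\gq$ with $c_S=a$ sits strictly between them. (Your sufficiency write-up is more tangled than necessary---the clean observation is simply that $\be_{S,a,q}<\ga<\be_{S,a,p}$ forces $\ga_S=a$ and $q<\ga_{i_S}<p$---but the content is right.)

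The gap is in how you obtain $S=T$ and $a=b$. You invoke Lemma~\ref{CritIncr}(2)--(3) to assert that the class has a \emph{unique} minimal incommensurable element, and since $\be_{S,a,q}$ and $\be_{T,b,p}$ are both minimal incommensurable, they must agree in $S$ and $a$. But this uniqueness is precisely what \emph{fails} here: the paper says so explicitly just before stating the lemma (``we cannot proceed as in section~\ref{secSameRank} because Lemma~\ref{MinInequiv} fails. There are minimal incommensurable elements in $\R^{I_S}_{\lx}\setminus\rlex$ which are equivalent''). Indeed, the whole point of Lemma~\ref{MinEquiv} is to determine \emph{which} distinct minimal incommensurable elements are equivalent. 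If the uniqueness you claim held, it would force $q=p$ as well, contradicting the very statement you are proving. The definite article in Lemma~\ref{CritIncr}(3) refers to the element produced by the procedure of Lemma~\ref{MinExists} applied to a \emph{specific} $\be$, not to a well-defined invariant of the class.

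What the paper actually does for $S=T$ and $a=b$ is to rerun the argument of Lemma~\ref{MinInequiv}: assuming $(S,a)\ne(T,b)$, locate the first index where the two elements differ, observe that the truncation up to that index is commensurable, and use Lemma~\ref{hahn} to produce a commensurable element strictly between them. You need to carry this out (with the extra care that the first difference may occur at an added index $i_S$ or $i_T$), rather than appealing to a uniqueness that is not available.
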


\begin{proof}
If the conditions of (\ref{pq>0}) are satisfied, we have $\be_{S,a,q}\sim_\sme\be_{S,a,p}$ by Lemma \ref{criteri}. Indeed, for any $\ga=(\ga_i)_{i\in I_S}\in\R^{I_S}_{\lx}$, the condition $\be_{S,a,q}<\ga<\be_{S,a,p}$ implies $\ga_S=a$ and $q<\ga_{i_S}<p$. Since we are asuming that $p$ and $q$ have the same sign, this implies $\ga_{i_S}\ne0$, so that $\ga$ cannot be commensurable over $\g$.

Conversely, suppose that $\be_{S,a,q}\sim_\sme\be_{T,b,p}$. Arguing as in the proof of Lemma \ref{MinInequiv}, we conclude that $S=T$ and $a=b$. 

Finally, suppose that $p$ and $q$ have a different sign; for instance, $q<0<p$. Then, any $\ga=(\ga_i)_{i\in I}\in\gq$ such that $\ga_S=a$ satisfies  $\be_{S,a,q}<\ga<\be_{S,a,p}$, because $\ga_{i_S}=0$. This is impossible by Lemma \ref{criteri}; thus, $p$ and $q$ must have the same sign.  
\end{proof}

As a consequence, the classes in \ $\left(\bigcup\nolimits_{S\in\inii}\R^{I_S}_{\lx}\right)\Big/\!\sim_\sme$ \ which increase the rank are
represented by the set
$$
\incrg=\bigcup_{S\in\inii}\left\{b^-,\,b^+\mid b\in\R^S_{\lx}\mbox{ commensurable}\right\},
$$
where we define
$$
b^-=\be_{S,b,-1}=\left(b\mid -1\mid 0\right),\qquad b^+=\be_{S,b,1}=\left(b\mid 1\mid 0\right).
$$
If $b^{\pm}=(b_i)_{i\in I_S}$, note that $b_{i_S}=\pm1$ and $b_j=0$ for all $j>i_S$.\e

The elements corresponding to $S=\emptyset$ deserve a special notation
$$
-\infty=\be_{\emptyset,-1}=(-1\mid0),\qquad \infty^-=\be_{\emptyset,1}=(1\mid0).
$$
The notation for $\infty^-$ is motivated by the fact that this element is the immediate predecessor of $\infty$ in the set $\gsme\infty$. \e

\noindent{\bf Definition.} The \emph{small-extensions closure} of $\g$ is the ordered set
$$
\gsme=\grr\,\sqcup\,\incrg,
$$
with the ordering induced by $\rii$.

This set is a system of representatives of \ $\left(\bigcup_{S\in\inii}\R^{I_S}_{\lx}\right)\Big/\!\sim_\sme$.\e

The next result follows immediately from Lemmas \ref{small<=1}, \ref{bebe}, \ref{MaxEqRk} and  \ref{SemiUniv}.

\begin{proposition}\label{smallSme}
Let $\g\hk\La$ be a small extension of $\g$, and let $\ga\in\La$ such that $\La=\gen{\g^{com},\ga}$. Let $\g^\com\!\ism\Delta\subset\gq$ be the canonical embedding of $\g^{com}$ into $\gq$.   

Then, for a  unique $\be\in\gsme$ there exists an isomorphism  of ordered groups $$\La\iso\gen{\Delta,\be},$$ sending $\ga$ to $\be$, and whose restriction to $\g^{com}$ is the canonical isomorphism $\g^{com}\!\ism \Delta$.    
\end{proposition}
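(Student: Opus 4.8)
The plan is to split into the two cases distinguished by Lemma \ref{small<=1}: either the small extension preserves the rank, or it increases the rank by one. In both cases I will produce the element $\be\in\gsme$ together with the required isomorphism, and then argue uniqueness via Theorem \ref{MinClass} / Lemma \ref{MinEquiv}, which say precisely that $\gsme$ is a system of representatives for $\sim_\sme$ on $\bigcup_{S\in\inii}\R^{I_S}_{\lx}$. First, I would observe that since $\Delta=\g^\com$ embeds canonically into $\gq$, and $\La=\gen{\Delta,\ga}$ with $\ga$ incommensurable over $\g$ (the commensurable case $\La=\g^\com$ being handled by Lemma \ref{bebe}(2), where $\be=\Delta$-image of $\ga$ is forced), it suffices to realize $\La$ as $\gen{\Delta,\be}$ for a suitable $\be$ in one of the ambient groups $\R^I_{\lx}$ or $\R^{I_S}_{\lx}$, and then replace $\be$ by the unique equivalent representative lying in $\gsme$.

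\textbf{Rank-preserving case.} Here $\g\hk\La$ preserves the rank, so Lemma \ref{MaxEqRk} gives an embedding $\La\hk\rlex$ compatible with the canonical maps from $\g$; extending along the divisible hull, this embedding carries $\g^\com$ onto a subgroup $\Delta'\subset\gq=(\rlex)^\com$, and by Lemma \ref{MaxComm} the identification $\Delta'=\Delta$ is the canonical one. Let $\be_0\in\rlex$ be the image of $\ga$; it is incommensurable, so by Lemma \ref{MinExists} there is a unique minimal incommensurable element $\be\in\eqr(\g)\subset\grr\subset\gsme$ with $\be\sim_\sme\be_0$. The $\g$-equivalence $\gen{\g,\be_0}\iso\gen{\g,\be}$ is the identity on $\g$ and sends $\be_0\mapsto\be$; since it sends $\g^\com$ to $\g^\com$ acting as the identity (Lemma \ref{bebe}(1)), composing $\La\iso\gen{\g^\com,\be_0}\iso\gen{\Delta,\be}$ gives the desired isomorphism sending $\ga\mapsto\be$ and restricting to the canonical map on $\g^\com$. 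Conversely, any such $\be'\in\gsme$ would have to lie in $\grr$ (a rank-increasing $\be'\in\incrg$ would force $\gen{\Delta,\be'}$, hence $\La$, to increase the rank), be incommensurable, and satisfy $\be'\sim_\sme\be$; by Lemma \ref{MinInequiv} (minimal incommensurable elements are pairwise $\g$-inequivalent) this forces $\be'=\be$. The uniqueness of the isomorphism itself follows because, as in the proof of Lemma \ref{criteri}, an isomorphism $\gen{\g^\com,\ga}\iso\gen{\Delta,\be}$ fixing $\g^\com$ and sending $\ga\mapsto\be$ is determined on all of $\La$ by $a+m\ga\mapsto a+m\be$.

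\textbf{Rank-increasing case.} Now $\g\hk\La$ increases the rank by one, so Lemma \ref{SemiUniv} yields a unique $S\in\inii$ and an embedding $\La\hk\R^{I_S}_{\lx}$ compatible with the canonical data; again the induced identification of $\g^\com$ with its image in $\gq$ is canonical by Lemma \ref{MaxComm}. Let $\be_0\in\R^{I_S}_{\lx}$ be the image of $\ga$. By the remark that Lemma \ref{MinExists} persists in $\R^{I_S}_{\lx}$ together with Lemma \ref{CritIncr}, the minimal incommensurable element in $[\be_0]_\sme$ has the form $\be_{S,a,q}=(a\mid q\mid 0)$ with $a\in\R^S_{\lx}$ commensurable and $q\ne 0$; by Lemma \ref{MinEquiv} we may normalize $q$ to $\pm1$, obtaining a unique representative $\be\in\{a^-,a^+\}\subset\incrg\subset\gsme$. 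As before, the composite $\La\iso\gen{\g^\com,\be_0}\iso\gen{\Delta,\be}$ is the required isomorphism, and uniqueness of $\be$ follows from Lemma \ref{MinEquiv} (which lists exactly when two such representatives coincide) — noting that $\be$ cannot lie in $\grr$, since any $\be'\in\grr$ gives a rank-preserving $\gen{\g,\be'}$.

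\textbf{Main obstacle.} The genuinely delicate point is checking that the various canonical identifications glue correctly: the embedding $\La\hk\R^{I_S}_{\lx}$ supplied by Lemma \ref{SemiUniv} (or $\La\hk\rlex$ by Lemma \ref{MaxEqRk}) is \emph{not} unique, so one must verify that \emph{every} such embedding restricts on $\g^\com$ to the canonical map into $\gq$ — this is where Lemma \ref{MaxComm} does the real work — and then that the $\g$-equivalence relating $\be_0$ to its normal form $\be$ is automatically $\g^\com$-equivariant, which is Lemma \ref{bebe}(1). Once these compatibilities are in place, existence is a matter of composing isomorphisms and uniqueness reduces cleanly to the already-established fact that $\gsme$ is a transversal for $\sim_\sme$ (Theorem \ref{MinClass}, Lemmas \ref{MinInequiv} and \ref{MinEquiv}) together with Lemma \ref{bebe}(2) in the commensurable case.
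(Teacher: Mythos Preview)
Your proposal is correct and follows essentially the same route as the paper: the paper's own proof is the single line ``follows immediately from Lemmas \ref{small<=1}, \ref{bebe}, \ref{MaxEqRk} and \ref{SemiUniv}'', and what you have written is precisely the unpacking of that line, together with the (implicit) appeal to the fact that $\gsme$ was constructed as a system of representatives for $\sim_\sme$ (Theorem \ref{MinClass}, Lemmas \ref{MinInequiv}, \ref{MinExists}, \ref{CritIncr}, \ref{MinEquiv}). Your identification of Lemma \ref{MaxComm} and Lemma \ref{bebe}(1) as the tools that force the various embeddings to agree on $\g^\com$ is exactly the point that makes the ``immediate'' claim go through.
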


\section{Basic properties of $\gsme$}

\subsection{Density of $\gq$ and canonical ordering}

\begin{lemma}\label{density}
Let $\be,\ga\in\bigcup_{S\in\inii}\R^{I_S}_{\lx}$ such that $\be<\ga$ and $[\be]_\sme\ne [\ga]_\sme$. Then,
\begin{enumerate}
\item There exists $q\in\gq$ such that $\be<q<\ga$. 
\item As subsets of $\bigcup_{S\in\inii}\R^{I_S}_{\lx}$ we have $[\be]_\sme<[\ga]_\sme$.
\end{enumerate}
\end{lemma}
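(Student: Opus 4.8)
\textbf{Plan for proving Lemma \ref{density}.}
The plan is to reduce everything to the combinatorial criterion for $\sim_\sme$ that was already established in Lemma \ref{criteri}, applied not in $\rlex$ but in the ambient group $\R^{I_S}_{\lx}$ for a large enough $S$. First I would fix $S\in\inii$ large enough that both $\be$ and $\ga$ lie in $\R^{I_S}_{\lx}$ (if $\be\in\R^{I_{S_1}}_{\lx}$ and $\ga\in\R^{I_{S_2}}_{\lx}$ one can embed both compatibly into $\R^{I_S}_{\lx}$ for $S=S_1\cup S_2$, using Lemma \ref{OAEUniv}); then $\gen{\g^\com,\be}$ and $\gen{\g^\com,\ga}$ are subgroups of a common ordered group and the hypothesis $[\be]_\sme\ne[\ga]_\sme$ means, by Lemma \ref{criteri} and Lemma \ref{bebe}, that there is some $b\in\g^\com$ — hence after clearing denominators some element realized inside $\gq$ in the ambient Hahn product — with $\be<b<\ga$, provided $\be$ and $\ga$ are both incommensurable. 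This immediately gives part (1) in the incommensurable case. The degenerate cases where $\be$ or $\ga$ is commensurable over $\g$ must be handled separately: if, say, $\be\in\gq$ then $\be$ itself will not do (we need strict inequality) but density of $\gq$ inside the Hahn product — more precisely Lemma \ref{hahn}, which lets us adjust $\be$ upward by a small element $b_{i,q}$ supported above the support of $\be$ — produces $q\in\gq$ with $\be<q\le\ga$, and if equality held we would be back to $[\be]_\sme=[\ga]_\sme$; similarly if $\ga\in\gq$. So part (1) follows by a short case analysis feeding into Lemma \ref{criteri} and Lemma \ref{hahn}.

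For part (2), once part (1) is available it is essentially a corollary. Take any $\be'\in[\be]_\sme$ and $\ga'\in[\ga]_\sme$; I want $\be'<\ga'$. Suppose not, so $\ga'\le\be'$; since the two classes are distinct, $\ga'<\be'$, and then part (1) applied to $\ga'<\be'$ (the classes being distinct, the hypothesis is met) yields $q\in\gq$ with $\ga'<q<\be'$. But $q\in\gq\subset\g^\com$ (more precisely its image in the ambient group lies in the commensurable part), so $q$ separates $\ga'$ from $\be'$ on the ``wrong side'': by Lemma \ref{criteri} this forces $\be'\not\sim_\sme\be$ or $\ga'\not\sim_\sme\ga$ once we compare with the relation of $q$ to $\be$ and $\ga$ — here I need to run the separation argument carefully, keeping track that $q<\be$ is incompatible with $q>\ga$ given $\be<\ga$, producing a contradiction with $[\be]_\sme\ne[\ga]_\sme$ being ``order-coherent.'' The cleanest way is: from part (1) get $q_0\in\gq$ with $\be<q_0<\ga$; then for $\be'\in[\be]_\sme$, Lemma \ref{criteri} (no element of $\g^\com$ lies strictly between $\be$ and $\be'$) forces $\be'<q_0$ since $q_0\in\g^\com$ and $q_0>\be$; likewise $\ga'>q_0$ for all $\ga'\in[\ga]_\sme$; hence $\be'<q_0<\ga'$, giving (2).

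The main obstacle I anticipate is purely bookkeeping rather than conceptual: making the reduction to a single ambient group $\R^{I_S}_{\lx}$ precise and compatible with the earlier setup, and correctly identifying ``$b\in\g^\com$'' with an honest element of $\gq$ sitting inside that Hahn product (the map $\gq\hk\hq\hk\rlex$ and its extension to $\R^{I_S}_{\lx}$ must be invoked), so that Lemmas \ref{criteri}, \ref{hahn} and \ref{bebe} apply verbatim. There is also a small subtlety in part (1) when \emph{both} $\be$ and $\ga$ are commensurable: then $[\be]_\sme=\{\be\}$ and $[\ga]_\sme=\{\ga\}$ by Lemma \ref{bebe}(2), both lie in (a copy of) $\gq$, and one simply needs a third element of $\gq$ strictly between them — again Lemma \ref{hahn} applied to $\ga-\be$ gives the requisite interpolation. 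Once these identifications are nailed down, the argument is the three-line separation trick above.
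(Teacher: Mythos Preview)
Your plan follows the paper's proof closely: split part (1) into cases by commensurability, invoke Lemma~\ref{criteri} when both $\be,\ga$ are incommensurable, use Lemma~\ref{hahn} to manufacture a separating $q\in\gq$ in the remaining cases, and then deduce (2) from (1) by a separation argument. Your version of (2) --- fixing a single $q_0\in\gq$ with $\be<q_0<\ga$ and then forcing $\be'<q_0<\ga'$ directly from Lemma~\ref{criteri}/Lemma~\ref{bebe} --- is a mild streamlining of the paper's argument, which instead applies part (1) twice (first to the pair $(\ga,\be')$, then to $(\be',\ga)$).

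Two details need tightening. First, the common ambient group: if $\be\in\R^{I_{S_1}}_{\lx}$ and $\ga\in\R^{I_{S_2}}_{\lx}$ with $S_1\ne S_2$, then $\R^{I_{S_1\cup S_2}}_{\lx}$ does \emph{not} contain both, since $I_{S_1\cup S_2}$ adjoins only the single index $i_{S_1\cup S_2}$, not $i_{S_1}$ and $i_{S_2}$ separately. The paper simply works inside $\rii$, and you should do the same. Second, your construction in the mixed case is not right as phrased. When, say, $\ga\in\gq$ and $\be\notin\gq$, ``adjusting by $b_{i,q}$ supported above the support'' need not land strictly between $\be$ and $\ga$: the relevant index is $i=\min\bigl(\supp(\ga-\be)\bigr)$, which may well lie inside the support. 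The paper's construction is to take this $i$, choose $z\in\Q$ with $\be_i<z<\ga_i$, form $p=b_{i,\,\ga_i-z}\in\gq$ via Lemma~\ref{hahn}, and set $q=\ga-p$; the case $\be\in\gq$, $\ga\notin\gq$ is symmetric. (Incidentally, when both $\be,\ga\in\gq$ no appeal to Lemma~\ref{hahn} is needed: $q=(\be+\ga)/2$ does the job.)
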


\begin{proof}
Let us prove (1). If $\be,\ga\in\gq$, we may take $q=(\be+\ga)/2$. If $\be,\ga\not\in\gq$, then (1) follows from Lemma \ref{criteri}.

Suppose that $\be=(x_j)_{j\in\I}\not\in\gq$ and $\ga=(y_j)_{j\in\I}\in\gq$. Let $i\in\I$ be the minimal element in $\supp(\ga-\be)$.
Since $x_i<y_i$ in $\R$, there exists $z\in\Q$ such that $x_i<z<y_i$. By Lemma \ref{hahn}, there exists $p\in\gq$ of the form
$$
p=(0\cdots0\mid\,y_i-z\mid\,\star\cdots\star),\qquad p_i=y_i-z.
$$
For $q=\gamma-p\in\gq$, we have $\be<q<\ga$.

The case $\be\in\gq$, $\ga\not\in\gq$ is completely analogous. \e

Let us prove (2). Take $\be'\in[\be]_\sme$, $\ga'\in[\ga]_\sme$. If $\ga<\be'$, then there is some $q\in\gq$ such that $\be<\ga<q<\be'$ by item (1). Since $\be\sim_\sme\be'$, this contradicts either Lemma \ref{bebe} (if $\be$ is commensurable), or Lemma \ref{criteri} (if $\be$ is incommensurable). Hence, we have necessarily $\be'<\ga$. 

Now, if $\ga'<\be'$ then we may find $q\in\gq$ such that $\ga'<\be'<q<\ga$, contradicting Lemma \ref{bebe} or Lemma \ref{criteri}, because  $\ga\sim_\sme\ga'$. Therefore, $\be'<\ga'$. 
\end{proof}

From item (2) of Lemma \ref{density} we deduce that the order on $\gsme$ is canonical. Any other choice of a system of representatives of $\left(\bigcup_{S\in\inii}\R^{I_S}_{\lx}\right)/\!\sim_\sme$ leads to a set which is isomorphic to $\gsme$ as ordered sets.

Also, from item (1) of Lemma \ref{density} we deduce the following result.

\begin{proposition}\label{GQdense}
$\gq$ is dense in $\gsme$. 
\end{proposition}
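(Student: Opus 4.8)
The statement to prove is Proposition \ref{GQdense}: $\gq$ is dense in $\gsme$.

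The plan is to derive this directly from item (1) of Lemma \ref{density}, which is the substantive result; the present proposition is essentially a repackaging in terms of the order topology on $\gsme$. First I would recall that $\gsme = \grr \sqcup \incrg$ is by construction a system of representatives of $\bigl(\bigcup_{S\in\inii}\R^{I_S}_{\lx}\bigr)/\!\sim_\sme$, carrying the total order induced from $\rii$, and that $\gq \subset \grr \subset \gsme$ (the commensurable classes are singletons by Lemma \ref{bebe}, so $\gq$ embeds as an honest subset). Density in the order topology means: for every pair $\be < \ga$ in $\gsme$ there exists $q \in \gq$ with $\be < q < \ga$ (and, to be careful about endpoints, that $\gsme$ has no isolated points other than possibly extrema — but since between any two distinct points lies a point of $\gq$, and $\gq$ is already order-dense in itself, no genuine issue arises; alternatively one just verifies the defining condition for a dense subset of a linearly ordered set, namely that every nonempty open interval meets $\gq$).

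The key step: take $\be < \ga$ in $\gsme$. Since $\gsme$ is a set of representatives, $[\be]_\sme \ne [\ga]_\sme$, so Lemma \ref{density}(1) applies (viewing $\be,\ga$ as elements of $\bigcup_{S\in\inii}\R^{I_S}_{\lx}$) and yields $q \in \gq$ with $\be < q < \ga$ in $\bigcup_{S\in\inii}\R^{I_S}_{\lx}$. Because $\gq$ sits inside $\grr \subset \gsme$ and the order on $\gsme$ is the restriction of the order on $\rii \supset \bigcup_{S}\R^{I_S}_{\lx}$, this inequality persists in $\gsme$: we get $\be < q < \ga$ with $q \in \gq \subset \gsme$. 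Hence every open interval $(\be,\ga)$ of $\gsme$ contains a point of $\gq$. To finish the statement that $\gq$ is dense, I would also note that $\gsme\infty$ has a maximal element $\infty$ and $\gsme$ may have a minimal element ($-\infty$); for the nonempty open intervals of the form $\gsme_{<\ga}$ or $\gsme_{>\be}$ one either observes they contain intervals of the first type, or one notes that for the minimal element $-\infty = (-1\mid 0)$ and maximal $\infty^- = (1\mid 0)$ of $\grr$-type there is nothing below/above inside $\gsme$ to worry about — so density in the order-topological sense reduces exactly to the interval condition just verified.

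I do not expect a genuine obstacle here; the only thing to be a little careful about is bookkeeping — making sure that the comparison $\be < q < \ga$ proved ``downstairs'' in $\bigcup_S \R^{I_S}_{\lx}$ really is the same comparison as the one in $\gsme$, which is immediate since both orders are restrictions of the order on $\rii$, and that $\gq$-elements are their own $\sim_\sme$-representatives (Lemma \ref{bebe}), so ``$q \in \gq$'' genuinely means ``$q \in \gsme$''. So the proof is essentially one line: \emph{By Lemma \ref{density}(1), for any $\be<\ga$ in $\gsme$ (necessarily in distinct $\sim_\sme$-classes) there is $q\in\gq$ with $\be<q<\ga$; since $\gq\subset\gsme$ and the orders agree, $\gq$ is dense in $\gsme$.}
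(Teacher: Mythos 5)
Your proof is correct and follows exactly the paper's route: the paper also deduces this proposition directly from item (1) of Lemma \ref{density}, using that distinct elements of $\gsme$ lie in distinct $\sim_\sme$-classes and that $\gq\subset\gsme$ with the induced order. Your extra remarks on endpoints and on commensurable classes being singletons are harmless bookkeeping the paper leaves implicit.
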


\subsection{Relative position of the increasing-rank elements}


The elements associated to the initial segment $S=\emptyset$ are extreme elements:
$$
-\infty=\min\left(\gsme\right),\qquad \infty^-=\max\left(\gsme\right).
$$

For $S=I$, each $b\in\gq$ has an immediate predecessor and an immediate successor:
$$
b^-<b<b^+,\qquad b^-=\max\left(\g_{\op{sme},<b}\right),\quad b^+=\min\left(\g_{\op{sme},>b}\right).
$$

If $\emptyset\subsetneq S \subsetneq I$, then for every commensurable $b\in\R^S_{\lx}$ we have
$$
b^-<\pi_S^{-1}(b)<b^+,\qquad b^-=\max\left(\g_{\op{sme},<\pi_S^{-1}(b)}\right),\quad b^+=\min\left(\g_{\op{sme},>\pi_S^{-1}(b)}\right).
$$

\noindent{\bf Examples.} Let us exhibit the set $\gsme$ in some concrete examples.\e

(0) \ $\g=\{0\}$.\e

In this case, \ $\gq=\grr=\{0\}$ \ and \ $\gsme=\{-\infty,\,0,\,\infty^-\}$. \e

(1) \ $\g=\R$.\e  

In this case, \ $\gq=\grr=\R$ \ and \ $\gsme$ \ is a real line with global minimal and maximal added elements, and such that to every real number an immediate predecessor and successor have been added.

\begin{center}
\setlength{\unitlength}{4mm}
\begin{picture}(16,6)
\put(-2,2){$\bullet$}\put(0.25,2.25){\line(1,0){16}}\put(18,2){$\bullet$}
\put(8,0.5){$\bullet$}\put(8,2){$\bullet$}\put(8,3.5){$\bullet$}
\put(-4,2){\begin{footnotesize}$-\infty$\end{footnotesize}}
\put(-1,2){\begin{footnotesize}$\cdots$\end{footnotesize}}
\put(16.5,2){\begin{footnotesize}$\cdots$\end{footnotesize}}
\put(19,2){\begin{footnotesize}$\infty^-$\end{footnotesize}}
\put(8.6,1.4){\begin{footnotesize}$b$\end{footnotesize}}
\put(8.6,3.6){\begin{footnotesize}$b^+$\end{footnotesize}}
\put(8.6,.3){\begin{footnotesize}$b^-$\end{footnotesize}}
\end{picture}
\end{center}

(2) \ $\g=\Q$.\e  

In this case, $\gq=\Q$, $\grr=\R$ and $\gsme$ is a real line with global minimal and maximal added elements, and such that to every rational number an immediate predecessor and successor have been added.

\begin{center}
\setlength{\unitlength}{4mm}
\begin{picture}(16,6)
\put(-2,2){$\bullet$}\put(0.25,2.25){\line(1,0){16}}\put(18,2){$\bullet$}
\put(8,0.5){$\bullet$}\put(8,2){$\bullet$}\put(8,3.5){$\bullet$}
\put(3,2){$\bullet$}
\put(-4,2){\begin{footnotesize}$-\infty$\end{footnotesize}}
\put(19,2){\begin{footnotesize}$\infty^-$\end{footnotesize}}
\put(-1,2){\begin{footnotesize}$\cdots$\end{footnotesize}}
\put(16.5,2){\begin{footnotesize}$\cdots$\end{footnotesize}}
\put(8.3,1.3){\begin{footnotesize}$b\in\Q$\end{footnotesize}}
\put(8.6,3.6){\begin{footnotesize}$b^+$\end{footnotesize}}
\put(8.6,.3){\begin{footnotesize}$b^-$\end{footnotesize}}
\put(3,1.2){\begin{footnotesize}$b\not\in\Q$\end{footnotesize}}
\end{picture}
\end{center}

(3) \ $\g=\R^2_{\lx}$.\e  

In this case, $\gq=\grr=\R^2_{\lx}$ and $\gsme$ is a real plane with global minimal and maximal added elements, such that every vertical line has a minimal and maximal added element, and to every single point an immediate predecessor and successor have been added.

\begin{center}
\setlength{\unitlength}{4mm}
\begin{picture}(22,14)
\put(-4,5){$\bullet$}\put(-1.75,5.25){\line(1,0){20}}\put(20,5){$\bullet$}
\put(11.5,8){$\bullet$}\put(10.5,7){$\bullet$}\put(12.5,9){$\bullet$}
\put(2.75,5){$\bullet$}
\put(3,0){\line(0,1){10}}

\multiput(11.05,7.35)(0.25,0.25){2}{$\cdot$}
\multiput(12.05,8.35)(0.25,0.25){2}{$\cdot$}
\put(12.2,8){\begin{footnotesize}$(a,b)$\end{footnotesize}}
\put(11.1,6.5){\begin{footnotesize}$(a,b)^-$\end{footnotesize}}
\put(13.2,9.4){\begin{footnotesize}$(a,b)^+$\end{footnotesize}}
\put(-6,5){\begin{footnotesize}$-\infty$\end{footnotesize}}
\put(21,5){\begin{footnotesize}$\infty^-$\end{footnotesize}}
\put(-3,5){\begin{footnotesize}$\cdots$\end{footnotesize}}
\put(18.5,5){\begin{footnotesize}$\cdots$\end{footnotesize}}
\put(3.3,4.2){\begin{footnotesize}$c$\end{footnotesize}}
\put(2.9,-1){\begin{footnotesize}$\vdots$\end{footnotesize}}
\put(2.9,10.2){\begin{footnotesize}$\vdots$\end{footnotesize}}
\put(2.75,11.2){$\bullet$}\put(2.75,-1.8){$\bullet$}
\put(3.4,11.4){\begin{footnotesize}$(c,\star)^+$\end{footnotesize}}
\put(3.4,-1.7){\begin{footnotesize}$(c,\star)^-$\end{footnotesize}}
\end{picture}
\end{center}\bs\bs

(4) \ $\g=\Q^2_{\lx}$.\e  

In this case, $\gq=\g$ and $\grr=\R^2_{\lx}\setminus \{(x,y)\mid x\not\in\Q,\ y\ne0\}$. 

Now, $\gsme$ adds to $\grr$ a global minimal and maximal elements. Also, it adds a minimal and maximal element to each vertical line with rational abscissa. Finally, it adds an immediate predecessor and successor to every single rational point in $\Q^2$.

\begin{center}
\setlength{\unitlength}{4mm}
\begin{picture}(28,14)
\put(-4,5){$\bullet$}\put(-1.75,5.25){\line(1,0){24}}\put(24,5){$\bullet$}
\put(15.5,9){$\bullet$}\put(14.5,8){$\bullet$}\put(16.5,10){$\bullet$}
\put(2.75,5){$\bullet$}\put(8.75,5){$\bullet$}
\put(3,0){\line(0,1){10}}

\multiput(15.05,8.35)(0.25,0.25){2}{$\cdot$}
\multiput(16.05,9.35)(0.25,0.25){2}{$\cdot$}
\put(16.2,9){\begin{footnotesize}$(a,b)\in\Q^2$\end{footnotesize}}
\put(15.1,7.5){\begin{footnotesize}$(a,b)^-$\end{footnotesize}}
\put(17.1,10.4){\begin{footnotesize}$(a,b)^+$\end{footnotesize}}
\multiput(9,-1)(0,.3){40}{\vrule height2pt}
\put(-6,5){\begin{footnotesize}$-\infty$\end{footnotesize}}
\put(25,5){\begin{footnotesize}$\infty^-$\end{footnotesize}}
\put(-3,5){\begin{footnotesize}$\cdots$\end{footnotesize}}
\put(22.5,5){\begin{footnotesize}$\cdots$\end{footnotesize}}
\put(3.3,4.2){\begin{footnotesize}$c\in\Q$\end{footnotesize}}
\put(9.3,4.2){\begin{footnotesize}$c\not\in\Q$\end{footnotesize}}
\put(2.9,-1){\begin{footnotesize}$\vdots$\end{footnotesize}}
\put(2.9,10.2){\begin{footnotesize}$\vdots$\end{footnotesize}}
\put(2.75,11.2){$\bullet$}\put(2.75,-1.8){$\bullet$}
\put(3.4,11.4){\begin{footnotesize}$(c,\star)^+$\end{footnotesize}}
\put(3.4,-1.7){\begin{footnotesize}$(c,\star)^-$\end{footnotesize}}
\end{picture}
\end{center}\bs\bs



\subsection{The ordered set $\gsme$ is complete}


In this section, we show that the small-extensions closure $\gsme$ of $\g$ is \emph{complete} as a totally ordered set. 

That is, any non-empty subset $X\subset \gsme$ admits a minimal upper bound $\ga\in\gsme$. 
We say that $\ga$ is the \emph{supremum} of $X$, and we write
$$
\ga=\sup(X)\in\gsme.
$$

The existence of a supremum implies the existence of an \emph{infimum} $$\inf(X)=-\sup(-X)\in\gsme,$$ which is a maximal lower bound of $X$ in $\gsme$. 

In fact, this follows from the following observation.

\begin{lemma}\label{menys}
The mapping $\be\mapsto -\be$ is an order-reversing automorphism of $\gsme$ as an ordered set. 
\end{lemma}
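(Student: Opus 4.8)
The plan is to show that the map $\sigma\colon\be\mapsto-\be$ is well-defined on $\gsme$, is a bijection, and reverses the order; all three facts will be extracted from the explicit description of $\gsme=\grr\sqcup\incrg$ as a subset of $\rii$, together with the density statement of Lemma~\ref{density}.

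First I would observe that negation is defined on the ambient group $\bigcup_{S\in\inii}\R^{I_S}_{\lx}$, acting coordinatewise, and that it is an order-\emph{reversing} automorphism there: for $\be=(\be_j)$ and $\ga=(\ga_j)$, if $i=\min(\supp(\be-\ga))$ then $i=\min(\supp((-\ga)-(-\be)))$ and $-\be_i<-\ga_i$ in $\R$ exactly when $\ga_i<\be_i$, i.e. when $\ga<\be$. Next I would check that $\sigma$ respects the equivalence relation $\sim_\sme$: if $h\colon\ggb\iso\gga$ is an order isomorphism fixing $\g$ and sending $\be$ to $\ga$, then the same map $h$ (which already sends $-\be$ to $-\ga$, being a group homomorphism) witnesses $-\be\sim_\sme-\ga$, since $\gen{\g,-\be}=\ggb$ and $\gen{\g,-\ga}=\gga$ as subgroups. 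Hence $\sigma$ descends to a well-defined, order-reversing bijection on the quotient $\bigl(\bigcup_{S\in\inii}\R^{I_S}_{\lx}\bigr)/\!\sim_\sme$, and by item (2) of Lemma~\ref{density} the order on that quotient is the one transported to $\gsme$.

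It then remains to confirm that $\sigma$ carries the chosen system of representatives $\gsme$ into itself, so that it induces an honest automorphism of the ordered set $\gsme$ rather than merely of the quotient. This is the step requiring a little case-checking. On the commensurable part $\gq$, negation obviously preserves $\gq$. On $\eqr(\g)$, negation sends a minimal incommensurable element $(\be_S\mid 0)$ to $(-\be_S\mid 0)$; since $\be_S$ incommensurable implies $-\be_S$ incommensurable (a witnessing $b\in\gq$ for $-\be_S$ would give $-b$ for $\be_S$), and since the minimal initial segment $S$ attached to $\be$ by Lemma~\ref{MinExists} depends only on which truncations $\be_T$ are commensurable — a condition stable under negation — the element $(-\be_S\mid0)$ is again minimal incommensurable with the same $S$; thus $\sigma$ preserves $\eqr(\g)$, hence $\grr$. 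On $\incrg$, negation interchanges $b^-=(b\mid-1\mid0)$ and $(-b)^+=(-b\mid 1\mid0)$ for commensurable $b\in\R^S_{\lx}$, and $-b$ is again commensurable; so $\sigma$ maps $\incrg$ bijectively onto itself, swapping the $(-)$ and $(+)$ decorations and in particular exchanging $-\infty$ and $\infty^-$.

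The only genuine subtlety, and the place I would be most careful, is the interface between ``$\sigma$ on the quotient'' and ``$\sigma$ on $\gsme$'': one must know that $\sigma$ applied to a \emph{representative} lands in the \emph{same} class as some (indeed the unique) representative in $\gsme$, which is exactly what the preservation of $\eqr(\g)$ and $\incrg$ above guarantees, combined with the uniqueness of minimal incommensurable representatives (Lemmas~\ref{MinInequiv} and~\ref{MinEquiv}). Once that is in place, order-reversal on $\gsme$ is inherited verbatim from order-reversal on the ambient group via Lemma~\ref{density}(2), and bijectivity is clear since $\sigma\circ\sigma=\mathrm{id}$. I expect the write-up to be short: the coordinatewise computation for order-reversal is the routine core, and the representative-preservation is a finite enumeration of the three types of elements.
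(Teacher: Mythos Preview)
Your proof is correct and follows essentially the same approach as the paper: check that negation preserves each stratum $\gq$, $\eqr(\g)$, $\incrg$ of $\gsme$, and inherit order-reversal from the ambient group. The detour through the quotient and Lemma~\ref{density}(2) is unnecessary, since $\gsme$ already carries the order induced from $\R^\I_{\lx}$; the paper simply observes this and concludes immediately once closure under negation is established.
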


\begin{proof}
The mapping $\be\mapsto -\be$ is an order-reversing automorphism of $\R^\I_{\lx}$ as an ordered group. Hence, we need only to check that
$$
\be\in\gsme\ \imp\ -\be\in\gsme.
$$
This follows immediately from the concrete description of the elements in $\gsme$ given in sections \ref{secSameRank} and \ref{secLargerRank}.
\end{proof}

\noindent{\bf Remark. }Actually, multiplication by $-1$ respects the different strata of $\gsme$:
$$\gsme=\gq\,\sqcup\,\eqrat(\g)\,\sqcup\,\eqri(\g)\,\sqcup\,\incrg.$$

For the proof of the completeness of $\gsme$ we need two auxiliary results. Let us first fix some notation.

For all $S\in\inii$ we denote its one-added-element hull by
$$
\S=S\cup\left\{i_U\mid U\in\inii,\ U\subset S\right\}.
$$

Also, for all $x=(x_i)_{i\in\I}\in\R^\I_{\lx}$ we consider projections
$$
x_S=(x_i)_{i\in S}\in\R^S_{\lx},\qquad  x_\S=(x_i)_{i\in\S}\in\R^\S_{\lx},
$$
and for all $X\subset\gsme$, we denote
$$
X_\S=\left\{x_\S\mid x\in X\right\}.
$$

For $S=\emptyset$, we have $\S=\{i_\emptyset\}$ and $x_\S=x_{i_\emptyset}\in\{-1,0,1\}$. We agree that $\R^\emptyset_{\lx}=\{0\}$.

We shall implicitly assume that $\R^S_{\lx}\subset \R^\S_{\lx}$ under the natural embedding
$$
(x_i)_{i\in S}\ \longmapsto\ (y_j)_{j\in\S},\quad\mbox{where }\ y_j=
\begin{cases}
x_j,&\mbox{ if }j\in S,\\
0,&\mbox{ otherwise}.
\end{cases}
$$
We shall compare elements in $\R^S_{\lx}$ with elements in $\R^\S_{\lx}$ without explicitly recalling this embedding. For instance, for all $x\in\R^\I_{\lx}$ we have
$$
x_S=x_\S \ \sii\ x_i=0\ \mbox{ for all }\ i\in\S\setminus S.
$$

An element $\be\in\R^\S_{\lx}$ is said to be \emph{commensurable} if there exists some $b\in\gq\subset\R^\I_{\lx}$ such that $b_\S=\be$. In this case, we have $b_S=b_\S$.

Finally, let us recall how the incommensurable elements in $\gsme$ were constructed.

\begin{remark}\label{rmkIncom}\rm
If $\rho\in\gsme$ is incommensurable, then we have two possibilities:\e

(a) \ $\rho=(\be\mid0)$ is a minimal incommensurable element in $\rlex$. That is, for some $S\in\inii$, $S\ne\emptyset$, we have $\be\in\R^S_{\lx}$ incommensurable, such that $\be_U$ is commensurable for all $U\in\inii$, $U\subsetneq S$. \e

(b) \ $\rho=(\be\mid\pm1\mid0)$ is a minimal incommensurable element in $\R^\I_{\lx}$, where $\be\in\R^S_{\lx}$ is commensurable for some $S\in\inii$. If $S=\emptyset$, then $\rho=(\pm1\mid0)$.

If $\rho=(\rho_i)_{i\in \I}$, then the unique non-zero coordinate at indices in $\I\setminus I$ is $\rho_{i_S}=\pm1$.
\end{remark}

\begin{lemma}\label{maxcomm}
Let $X\subset\gsme$ be a non-empty subset having no maximal element. Let $S\in\inii$ be an initial segment such that $X_\S$ contains a maximal element, and take $x\in X$ such that $x_\S=\max(X_\S)$. Then:
\begin{enumerate}
\item All $y\in X$, $y\ge x$, have projection $y_\S=x_\S$.
\item For all $U\in\inii$, $U\subset S$, we have $\max(X_\U)=x_\U$.
\item The element $x_\S$ is commensurable. In particular, $x_\S=x_S$ belongs to $\R^S_{\lx}$.
\end{enumerate}
\end{lemma}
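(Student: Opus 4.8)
\textbf{Proof plan for Lemma \ref{maxcomm}.}

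The strategy is to prove the three assertions in the stated order, since each feeds into the next. For (1), I would argue by contradiction: suppose $y\in X$ with $y\ge x$ but $y_\S\ne x_\S$. Since $x_\S=\max(X_\S)$ we cannot have $y_\S>x_\S$, so at the first index $i\in\S$ where $x$ and $y$ disagree we must have $y_i<x_i$. But then, comparing $x$ and $y$ coordinatewise in $\R^\I_{\lx}$, the first place they differ either lies in $\S$ — forcing $y<x$, contradiction — or lies in $\I\setminus\S$ only if they agree on all of $\S$, contradicting $y_\S\ne x_\S$. Here one must be slightly careful: $\S$ as defined is an initial segment of $\I$ (it consists of $S$ together with the new points $i_U$ for $U\subset S$), so ``first index of disagreement lies in $\S$ or else they agree on $\S$'' is legitimate. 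This gives $y_\S=x_\S$.

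For (2), fix $U\subset S$ in $\inii$; then $\U\subset\S$ and $x_\U$ is the restriction of $x_\S$. Given any $z\in X$, I want $z_\U\le x_\U$. If $z\le x$ this is immediate by monotonicity of restriction; if $z>x$, then by (1) we have $z_\S=x_\S$, hence $z_\U=x_\U$. So $x_\U$ is an upper bound for $X_\U$ and it is attained, i.e.\ $\max(X_\U)=x_\U$.

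The real work is (3), which is where I expect the main obstacle. The claim is that $x_\S$ is commensurable, equivalently (by the conventions just set up) that there is $b\in\gq$ with $b_\S=x_\S$, and then automatically $x_\S=x_S\in\R^S_{\lx}$ because a commensurable element has no nonzero coordinate at the new indices $\I\setminus I$. I would suppose $x_\S$ is incommensurable and derive a contradiction with the hypothesis that $X$ has no maximal element. Using Remark \ref{rmkIncom} applied to the element $x$ (or rather to the minimal initial segment $S'\subset\S$ beyond which $x$ becomes incommensurable), $x_\S$ incommensurable means one of two things happens at some index: either $x$ has a genuinely irrational real coordinate $x_i\in\R\setminus Q_i$ at some $i\in S$ with all earlier coordinates commensurable, or the last nonzero new-index coordinate $x_{i_U}=\pm1$ occurs. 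In either case I want to produce an element $x'$ with $x'>x$ but $x'_\S=x_\S$ (or more precisely an element of $\gsme$ strictly between $x$ and the ``next level up'' that still lies above $x$ in $X$'s closure), contradicting (1) combined with the no-maximal-element hypothesis — because if $x_\S=\max(X_\S)$ is attained at $x$ and $X$ has no maximal element, there is $y\in X$, $y>x$, and by (1) $y_\S=x_\S$; then the strict inequality $y>x$ forces a difference at an index in $\I\setminus\S$, i.e.\ strictly above all of $\S$. The contradiction I am after is that when $x_\S$ is incommensurable, no element of $\gsme$ can differ from $x$ only at indices strictly beyond $\S$: concretely, $\gsme$ consists of $\gq$ together with \emph{minimal} incommensurable representatives, and a minimal incommensurable element whose incommensurability is already ``witnessed'' inside $\S$ cannot be extended by nonzero coordinates past $\S$ — that would violate minimality of its support / initial segment, by the very construction in sections \ref{secSameRank} and \ref{secLargerRank}. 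So any $y\in\gsme$ with $y_\S=x_\S$ incommensurable must in fact equal $x$ on all of $\I$, i.e.\ $y=x$, contradicting $y>x$. Hence $x_\S$ is commensurable. I expect the delicate point to be bookkeeping the two cases of Remark \ref{rmkIncom} uniformly and making precise the statement ``a minimal incommensurable element of $\gsme$ has no nonzero coordinates strictly beyond the initial segment witnessing its incommensurability'' — but this is exactly what the definitions of $\eqr(\g)$ and $\incrg$ encode.
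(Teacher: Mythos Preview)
Your proposal is correct and follows essentially the same route as the paper's proof. The paper's arguments for (1) and (2) are slightly more direct---it simply observes that $\S$ being an initial segment of $\I$ makes $\pi_\S$ order-preserving, so $y\ge x$ gives $y_\S\ge x_\S$ and hence $y_\S=x_\S$ immediately---while your contradiction argument unpacks the same fact; for (3) the paper likewise assumes $x_\S$ incommensurable, uses Remark~\ref{rmkIncom} to write $x=(\be\mid0)$ or $(\be\mid\pm1\mid0)$ with $\be\in\R^U_{\lx}$, argues (in one line) that $U\subset S$ since otherwise $x_\S$ would be commensurable by minimality of $U$, and then concludes exactly as you do that any $y\ge x$ in $X$ must equal $x$, contradicting the no-maximum hypothesis---the only point you leave implicit is that justification of $S'\subset\S$, which is this one-line step.
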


\begin{proof}
All $y\in X$, $y\ge x$, satisfy $y_\S\in X_\S$ and $y_\S\ge x_\S$. Since $x_\S=\max(X_\S)$, necessarily $y_\S=x_\S$. This proves (1).

Let $U\in\inii$ such that $U\subset S$. For all $y\in X$ the inequality $y_\S\le x_\S$ implies 
$y_\U\le x_\U$. This proves (2).

Suppose that $x_\S$ is incommensurable. Then, $x$ is incommensurable and Remark \ref{rmkIncom} shows that there exist $U\in\inii$ and $\be\in\R^U_{\lx}$such that:

(a) \ $x=(\be\mid0)\in\rlex$ minimal incommensurable, or

(b) \ $x=(\be\mid\pm1\mid0)\in\R^\I_{\lx}$ and $\be$ is commensurable. In this case, $x_{i_U}=\pm1$. 

Now, if $S\subsetneq U$, then $x_\S=\be_\S$ would be commensurable, against our assumption. Hence, $U\subset S$, and this implies $x=(x_\S\mid0)$. By item (1), for all $y\in X$, $y\ge x$ , the projection  $y_\S=x_\S$ is incommensurable. By Remark \ref{rmkIncom}, we must have $y=(y_\S\mid0)$ too; that is, $y=x$. This implies $\max(X)=x$, against our asumption. 

Therefore, $x_\S$ must be commensurable.  
\end{proof}

\begin{lemma}\label{globalmax}
Let $X\subset\gsme$ be a non-empty subset having no maximal element. Consider the following set of initial segments of $I$:
$$
\mathcal{L}=\left\{S\in\inii\mid X_\S\mbox{ contains a maximal element}\right\}\subset \inii.
$$
Suppose that $T=\bigcup_{S\in\mathcal{L}}S\in\inii$ does not belong to $\mathcal{L}$. Then, there exists a unique $\be=\left(\be_i\right)_{i\in T}\in\R^T_{\lx}$ such that
 $\;\be_S=\max(X_\S)$ \ for all $S\in\mathcal{L}$.

Moreover, this element $\be$ satisfies
$$
x_{\T^0}< \be\quad \mbox{ for all }x\in X,
$$
where $\T^0=\bigcup_{S\in\mathcal{L}}\S=\T\setminus\{i_T\}=\I_{<i_T}$.
\end{lemma}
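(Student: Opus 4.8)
The plan is to describe $\mathcal{L}$ precisely, glue the partial maxima $m_S:=\max(X_\S)$ for $S\in\mathcal{L}$ into a single vector $\be\in\R^T_{\lx}$, and then deduce the displayed strict inequality by a short two‑step argument. First I would record that $\mathcal{L}$ is totally ordered and downward closed in $\inii$: any two initial segments of $I$ are comparable, and by Lemma \ref{maxcomm}(2), if $S\in\mathcal{L}$ and $U\subseteq S$ is an initial segment, then $\max(X_\U)$ exists, so $U\in\mathcal{L}$. Hence every initial segment $S\subsetneq T$ lies in $\mathcal{L}$ (given $i\in T\setminus S$, $i$ belongs to some $S'\in\mathcal{L}$, and comparability forces $S\subsetneq S'$), whereas $T\notin\mathcal{L}$ by hypothesis; thus $\mathcal{L}=\{S\in\inii:S\subsetneq T\}$. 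For $S\in\mathcal{L}$, Lemma \ref{maxcomm}(3) says $m_S$ is commensurable, so $m_S\in\R^S_{\lx}$ and has vanishing coordinates at every extra index $i_W$; and for $U\subseteq S$ in $\mathcal{L}$, applying Lemma \ref{maxcomm}(2) to a witness $x\in X$ with $x_\S=m_S$ yields $m_U=x_\U=(m_S)_U$, so the family $(m_S)_{S\in\mathcal{L}}$ is coherent under projection.

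Next I would set $\be_i:=(m_S)_i$ for $i\in S\in\mathcal{L}$; coherence makes this well defined, $\be_S=m_S$ for all $S\in\mathcal{L}$, and uniqueness is immediate since $T=\bigcup_{S\in\mathcal{L}}S$. The only substantive point is that $\be\in\R^T_{\lx}$, i.e.\ that $\supp(\be)\subseteq T$ is well ordered. I would deduce this from the facts that $\supp(\be)\cap S=\supp(m_S)$ is well ordered and that, for $S\subseteq S'$ in $\mathcal{L}$, $\supp(m_S)$ is an \emph{initial segment} of $\supp(m_{S'})$ (if $a'<a$ with $a\in S$ then $a'\in S$, since $S$ is an initial segment of $I$, and $\be_{a'}\ne0$); an increasing union of well‑ordered sets, each an initial segment of the next, is well ordered.

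For the displayed inequality I would argue in two steps, comparing everything in $\R^{\T^0}_{\lx}$ with $\be$ extended by zero. \emph{Step 1: $x_{\T^0}\le\be$ for every $x\in X$.} If not, let $k$ be the least index of $\supp(x_{\T^0}-\be)$, so $x_k>\be_k$; since $\T^0=\bigcup_{S\in\mathcal{L}}\S$, we have $k\in\S$ for some $S\in\mathcal{L}$, and then $x_\S$ agrees with $\be_\S=m_S$ below $k$ and strictly exceeds it at $k$, so $x_\S>m_S=\max(X_\S)$ — impossible, as $x_\S\in X_\S$. \emph{Step 2: $x_{\T^0}\ne\be$ for every $x\in X$.} Suppose $x_{\T^0}=\be$ for some $x\in X$ and put $Y=\{y\in X:y_{\T^0}=\be\}\ni x$. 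By the description of the incommensurable elements of $\gsme$ recalled in Remark \ref{rmkIncom}, $y_{i_T}\in\{-1,0,1\}$ for every $y\in\gsme$, so $t:=\max\{y_{i_T}:y\in Y\}$ exists; a witness $y^*\in Y$ with $y^*_{i_T}=t$ satisfies $y^*_\T=(\be\mid t)$, and this is $\max(X_\T)$: any $y\in X$ has $y_{\T^0}\le\be$ by Step 1, with $y_\T<(\be\mid t)$ if $y_{\T^0}<\be$ and $y_\T=(\be\mid y_{i_T})\le(\be\mid t)$ if $y\in Y$. So $X_\T$ has a maximal element, i.e.\ $T\in\mathcal{L}$, contradicting the hypothesis. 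Combining the two steps gives $x_{\T^0}<\be$ for all $x\in X$.

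The main obstacle is bookkeeping: one must keep straight the nested index sets $T\subseteq\T^0=\I_{<i_T}$ and the embeddings $\R^S_{\lx}\subset\R^\S_{\lx}\subset\R^{\T^0}_{\lx}$, and use repeatedly that commensurable elements vanish at the extra indices $i_W$ and that initial segments of $I$ are pairwise comparable. The one genuinely non‑formal ingredient is the well‑orderedness of $\supp(\be)$, which, as indicated, reduces to the supports $\supp(m_S)$ forming a chain of initial segments.
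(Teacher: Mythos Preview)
Your proof is correct and follows essentially the same approach as the paper: glue the coherent family $\max(X_\S)$ (commensurable by Lemma~\ref{maxcomm}) into $\be\in\R^T_{\lx}$, verify well-orderedness of $\supp(\be)$ via the fact that each $\supp(m_S)$ is a well-ordered initial segment, and then prove the strict inequality in two steps. Your Step~2 is a mild streamlining of the paper's version: rather than ruling out $y_{i_T}=\pm1$ case by case for $y>x$, you take $t=\max\{y_{i_T}:y\in Y\}$ directly (using that this coordinate lies in $\{-1,0,1\}$) and exhibit $\max(X_\T)$, which is a little cleaner. One tiny remark: when you invoke Remark~\ref{rmkIncom} to get $y_{i_T}\in\{-1,0,1\}$ for \emph{every} $y\in\gsme$, you should also note (trivially) that commensurable $y\in\gq\subset\R^I_{\lx}$ have $y_{i_T}=0$, since the remark only treats the incommensurable case.
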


\begin{proof}
Note that $\emptyset\in\mathcal{L}$ and $I\not\in\mathcal{L}$, by our assumptions on $X$. Thus, $\mathcal{L}$ is a non-empty proper subset of $\inii$.
Since $T\not\in\mathcal{L}$, we have $S\subsetneq T$ for all $S\in\mathcal{L}$.

Denote $\be_S=\max(X_\S)$ for all $S\in\mathcal{L}$. By Lemma \ref{maxcomm}, these elements $\be_S\in\R^S_{\lx}$ are commensurable and form a coherent sequence with respect to inclusions:
$$
U,S\in\mathcal{L},\quad U\subset S\ \imp\ \left(\be_{S}\right)_U=\be_U.
$$
Thus, they determine an element $(\be_i)_{i\in T}\in\R^{T}$ whose coordinates are uniquely determined by:
$$
\be_i=\left(\be_S\right)_i,\quad \mbox{for all }S\in\mathcal{L}\mbox{ such that }i\in S.
$$

The property $\be_S=\max(X_\S)$, for all $S\in\mathcal{L}$, follows from the construction of $\be$. 


Now, let us show that $\be$ belongs to the Hahn product $\R^T_{\lx}$. To this end, we must check that the set $\supp(\be)=\{i\in T\mid \be_i\ne0\}$ is well ordered.

Let $J\subset \supp(\be)$ be a non-empty subset. Take some $j\in J$. There exists $S\in\mathcal{L}$ such that $j\in S$; thus, $J_{\le j}\subset\supp(\be_S)$. Since $\supp(\be_S)$ is well ordered, the set $J_{\le j}$ has a minimal element. This element is the minimal element of $J$  too.  

This proves that $\supp(\be)$ is well ordered, so that $\be\in\R^T_{\lx}$.

Finally, let us  show that $\be>x_{\T^0}$ for all $x\in X$. For all $x\in X$ we have
\begin{equation}\label{maxx}
x_\S\le\be_\S=\be_S\quad\mbox{ for all }S\in\mathcal{L}\ \imp\ x_{\T^0}\le \be_{\T^0}=\be.
\end{equation}


Suppose that $x_{\T^0}=\be_{\T^0}=\be$ for some $x\in X$. Let us show that this leads to a contradiction.


For all $y\in X$, $y>x$, we have $y_{\T^0}\ge x_{\T^0}$. By (\ref{maxx}), we deduce that $y_{\T^0}=x_{\T^0}$.

By Remark \ref{rmkIncom}, if $y_{i_T}\ne0$, then we have necessarily $$y=(y_T\mid\pm1\mid0)\quad\mbox{with $y_T$ commensurable and }y_{i_T}=\pm1.$$
Both possibilities lead to a contradicion. If $y=(y_T\mid-1\mid0)$, then $y\le x$, while $y=(y_T\mid1\mid0)$ implies $y=\max(X)$, against our assumptions.

Therefore, all $y\in X$, $y>x$, satisfy $y_{i_T}=0$. Thus, all these elements satisfy $y_\T=\max(X_\T)$, so that $T$ would belong to $\mathcal{L}$, against our assumption.    
\end{proof}

\begin{theorem}\label{completion}
The totally ordered set $\ \gsme$ is complete. 
\end{theorem}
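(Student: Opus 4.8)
The plan is to show that every non-empty subset $X\subset\gsme$ has a supremum; by Lemma~\ref{menys} this also yields infima. Fix a non-empty $X\subset\gsme$. If $X$ has a maximal element, that element is the supremum and we are done, so assume from now on that $X$ has no maximal element. The whole strategy is to reconstruct the supremum coordinate-by-coordinate along $\I$, using the ``maximal projection'' set
$$
\mathcal{L}=\left\{S\in\inii\mid X_\S\mbox{ contains a maximal element}\right\}
$$
already introduced before Lemma~\ref{globalmax}. Note $\emptyset\in\mathcal{L}$ (since $X_{\{i_\emptyset\}}\subset\{-1,0,1\}$ is finite and non-empty), so $\mathcal{L}\ne\emptyset$; and $I\notin\mathcal{L}$, for otherwise the element realizing $\max(X_{\mathbb I})$ would be a maximal element of $X$ (as $X\subset\gsme\subset\R^\I_{\lx}$ and equality of all coordinates forces equality). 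So $\mathcal{L}$ is a non-empty proper initial-segment-like family; one checks it is downward closed: if $S\in\mathcal{L}$ and $U\subset S$ then $\max(X_\U)$ exists by Lemma~\ref{maxcomm}(2). Hence $\mathcal{L}$ is itself an initial segment of $\inii$ in a suitable sense, and we set $T=\bigcup_{S\in\mathcal{L}}S\in\inii$.

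The proof then splits according to whether $T\in\mathcal{L}$ or not. \emph{Case $T\notin\mathcal{L}$.} Lemma~\ref{globalmax} produces a commensurable element $\be=(\be_i)_{i\in T}\in\R^T_{\lx}$ with $\be_S=\max(X_\S)$ for all $S\in\mathcal{L}$ and with $x_{\T^0}<\be$ for all $x\in X$, where $\T^0=\I_{<i_T}$. Since $\be$ is commensurable, pick $b\in\gq$ with $b_T=\be$. I claim $b^+=(\be\mid 1\mid 0)\in\incrg$ (for the initial segment $T$) is the supremum of $X$. It is an upper bound: for $x\in X$, comparing coordinates along $\I$, the first index where $x$ and $b^+$ differ lies in $\T^0$ unless $x_{\T^0}=\be=b^+_{\T^0}$, and that latter case is excluded by Lemma~\ref{globalmax}; at the first differing index in $\T^0$ we have $x_i\le\be_i$ with strict inequality forcing $x<b^+$, and if $x_{\T^0}<\be_{\T^0}$ lexicographically we again get $x<b^+$. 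For minimality, suppose $\ga\in\gsme$ is an upper bound with $\ga<b^+$. Then $\ga\le b$ is impossible only if $\ga\in\pi_T^{-1}$-fibre considerations force $\ga_{\T^0}=\be$ and $\ga_{i_T}\le 0$; but then for the index $S=T\notin\mathcal{L}$, $X_\S$ has no maximal element, so for every $x\in X$ there is $x'\in X$ with $x_\S<x'_\S\le\ga_\S$, and using $\gq$-density (Proposition~\ref{GQdense}) together with Lemma~\ref{criteri} one produces an element of $X$ exceeding $\ga$, a contradiction. \emph{Case $T\in\mathcal{L}$.} Then $\max(X_\T)$ exists; let $x\in X$ realize it, so by Lemma~\ref{maxcomm}(3) $x_\T=x_T$ is commensurable and all $y\in X$ with $y\ge x$ have $y_\T=x_\T$. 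Pick $b\in\gq$ with $b_T=x_T$. Since $X$ has no maximal element, such $y$'s exist and differ from $x$ only at indices $>i_T$, i.e. in $I\setminus(T\cup\{i_T\})$; but $T=\max(\mathcal{L})$ means $X_{\U}$ has \emph{no} maximal element for the initial segments $U\supsetneq T$ that first extend $T$ — so one runs the same coordinate-building argument one level up, which I will organize as an internal induction / recursion showing that either the process terminates (producing $\sup(X)$ of the form $c^\pm$ or a genuine element of $\grr$) or stabilizes to the candidate $\sup(X)=b^+$ with $b$ the assembled commensurable element.

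To keep the argument clean I would in fact \emph{not} split into two ad hoc cases but instead prove the following single claim by induction along the well-founded structure available: \emph{for the element $\be$ assembled over $T$ as above, the element $\sup(X)$ equals $(\be\mid 1\mid 0)$ if $\be$ is commensurable and the projections strictly increase towards $\be$, and otherwise equals the minimal incommensurable element $(\be_{S}\mid 0)\in\grr$ for the appropriate $S$.} The verification that the proposed element is an upper bound is routine coordinatewise comparison in $\R^\I_{\lx}$; the verification of minimality is where Lemmas~\ref{criteri}, \ref{density} and \ref{maxcomm} do the real work, ruling out any strictly smaller upper bound by exhibiting an element of $X$ beyond it. The main obstacle, and the step I expect to require the most care, is the bookkeeping when $T\in\mathcal{L}$: one must show that passing from $T$ to the ``next'' relevant initial segments is a genuinely well-founded recursion (there is no infinite descent), so that the coordinate-building terminates or converges to a legitimate element of $\gsme$ — this is exactly the place where the completeness of $\gsme$ (as opposed to $\rlex$) is used, via the explicit inventory of strata $\gsme=\gq\sqcup\eqrat(\g)\sqcup\eqri(\g)\sqcup\incrg$ recalled in the Remark after Lemma~\ref{menys}. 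Once that recursion is set up correctly, identifying which stratum $\sup(X)$ lands in is forced by whether the assembled $\be$ is commensurable and whether its support has a maximal element, exactly as in Lemma~\ref{Srat}.
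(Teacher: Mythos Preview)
Your overall architecture matches the paper's: reduce to $X$ without a maximum, introduce $\mathcal{L}$ and $T=\bigcup_{S\in\mathcal{L}}S$, and split on whether $T\in\mathcal{L}$. But both branches of your argument contain genuine errors.

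\textbf{Case $T\notin\mathcal{L}$.} You assert that the element $\be\in\R^T_{\lx}$ produced by Lemma~\ref{globalmax} is commensurable. That lemma says no such thing: only the truncations $\be_S$ for $S\in\mathcal{L}$ are commensurable (via Lemma~\ref{maxcomm}), and the limit $\be$ may well be incommensurable (e.g.\ for $\g=\Q^{(\N)}$ one can arrange $\be\in\Q^{\N}\setminus\Q^{(\N)}$). More seriously, even when $\be$ \emph{is} commensurable your candidate $(\be\mid 1\mid 0)$ is too large. Lemma~\ref{globalmax} gives the \emph{strict} inequality $x_{\T^0}<\be$ for every $x\in X$, so already $(\be\mid -1\mid 0)$ is an upper bound for $X$, and it lies strictly below your $(\be\mid 1\mid 0)$. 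The paper's correct answer here is
\[
\sup(X)=\begin{cases}(\be\mid 0),&\be\ \text{incommensurable},\\(\be\mid -1\mid 0),&\be\ \text{commensurable},\end{cases}
\]
and both subcases must be treated.

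\textbf{Case $T\in\mathcal{L}$.} Your proposal to ``run the same coordinate-building argument one level up'' as a ``well-founded recursion'' does not work: there is no evident well-founded descent that terminates. The paper does something quite different and more concrete. It splits on whether $I\setminus T$ has a minimal element. If not, one checks directly that $\sup(X)=(x_T\mid 1\mid 0)$. If $i=\min(I\setminus T)$ exists, the crucial step is to project the tail of $X$ onto the single real coordinate $i$, obtaining a set $Y\subset\R$; then the \emph{completeness of $\R$} produces $a=\sup(Y)$ (or shows $Y$ is unbounded), and $\sup(X)$ is read off from $a$ and whether $a\in Q_i$. This appeal to the completeness of $\R$ at the first new index is the heart of the proof in this branch, and your recursion scheme does not capture it.
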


\begin{proof}
Take a non-empty subset $X\subset \gsme$. If $X$ contains a maximal element, then $\sup(X)=\max(X)$. Suppose that $X$ does not contain a maximal element.

Consider the set of initial segments of $I$:
$$
\mathcal{L}=\left\{S\in\inii\mid X_\S\mbox{ contains a maximal element}\right\}\subset \inii.
$$
Let $T=\bigcup_{S\in\mathcal{L}}S\in\inii$. 
We distinguish two cases, requiring different arguments.\e

\noindent{\bf Case 1: \ $T\not\in\mathcal{L}$. }

Consider the element $\be=\left(\be_i\right)_{i\in T}\in\R^T_{\lx}$ constructed in Lemma \ref{globalmax}, satisfying
$\be_S=\max(X_\S)$ for all $S\in\mathcal{L}$, and 
$$x_{\T^0}<\be\ \mbox{ for all }x\in X,$$
where $\T^0=\bigcup_{S\in\mathcal{L}}\S=\T\setminus\{i_T\}=\I_{<i_T}$.



In this case, we claim that
$$
\sup(X)=\ga:=
\begin{cases}
(\be\mid0),&\mbox{ if $\be$ is incommensurable},\\
(\be\mid-1\mid0),&\mbox{ if $\be$ is commensurable}.
\end{cases}
$$

By Remark \ref{rmkIncom}, $\ga\in\gsme$ in both cases, because $\be_S$ is commensurable for all $S\subsetneq T$, by construction.

The inequality $x_{\T^0}<\be=\ga_{\T^0}$ for all $x\in X$, shows that $\ga$ is an upper bound of $X$. Let us check that it is the minimal upper bound in $\gsme$. 

Suppose that $\rho\in\gsme$ satisfies $\rho<\ga$. Then, necessarily $\rho_{\T^0}<\be=\ga_{\T^0}$. 

In fact, suppose that $\rho_{\T^0}=\be$. If $\be$ is commensurable, then $\rho_{i_T}\le \ga_{i_T}=-1$ implies $\rho_{i_T}=-1$, and this implies $\rho=(\be\mid-1\mid0)=\ga$, against our assumption. If $\be$ is incommensurable, then $\rho$ is incommensurable. Since $\rho_i=\be_i=0$ for all $i\in\T^0\setminus T$, necessarily $\rho=(\be\mid0)=\ga$, against our assumption.

Let $i\in \T^0$ be minimal such that $\rho_i<\be_i$. Take $S\in\mathcal{L}$ such that $i\in\S$. There exists $x\in X$ such that $x_\S=\be_\S=\max(X_\S)$. Hence, $\rho<x$.   

Therefore, $\rho$ cannot be an upper bound for $X$.\e

\noindent{\bf Case 2: $T\in\mathcal{L}$. }

In this case, $T=\max(\mathcal{L})$.
Take $x\in X$ such that $x_\T=\max(X_\T)$.
By Lemma \ref{maxcomm}, $x_T=x_\T$ is commensurable.

Since $X$ contains no maximal element, we have necessarily $T\subsetneq I$.  
We distinguish two subcases according to the set $I\setminus T$ having a minimal element or not.\e

\noindent{\bf Case 2a: $I\setminus T$ has no minimal element.}

In this case, we claim that $$\sup(X)=\ga:=(x_T\mid1\mid0)\in\gsme.$$

Indeed, for all $y\in X$, $y\ge x$, Lemma \ref{maxcomm} shows that $y_\T=x_\T$. Thus, $y_T=x_T$ and $y_{i_T}=x_{i_T}=0$, so that $y<\ga$. This shows that $\ga$ is an upper bound for $X$.

Let us show that $\ga$ is the minimal upper bound of $X$ in $\gsme$. Take $\rho=(\rho_i)_{i\in\I}\in\gsme$ such that $x\le\rho<\ga$. Then, $$x_T\le \rho_T\le\ga_T=x_T\ \imp\  x_T=\rho_T.$$ 
Hence, $0=x_{i_T}\le\rho_{i_T}\le\ga_{i_T}=1$, and this implies $\rho_{i_T}=0$.

Indeed, since $\rho\in\gsme$, the equality $\rho_{i_T}=1$ would imply $\rho=(x_T\mid1\mid0)=\ga$, against our assumption. 

Now, since $\supp(x)$ and $\supp(\rho)$ are well-ordered subsets of $\I$, we have:\e

$\bullet$ \ Either $x=(x_T\mid0)$, or there exists $i\in\I_{>i_T}$ minimal satisfying $x_i\ne0$. \e

$\bullet$ \ Either $\rho=(x_T\mid0)$, or there exists $j\in\I_{>i_T}$ minimal satisfying $\rho_j\ne0$.\e 

In any case, since $I\setminus T$ has no minimal element, there exist $k,k'\in I$ such that $T<k<k'$ and 
$$
x_{\ell}=0,\ \rho_\ell=0,\quad \mbox{ for all }\ \ell\in\I,\quad i_T\le\ell<k'.
$$
Hence, for the initial segment $U=I_{<k}$ we have  $x_\U=\rho_\U$. Since $I\setminus T$ has no minimal element, we have $T\subsetneq U$, so that $U$ does not belong to $\mathcal{L}$. Thus, the set 
$X_\U$ does not contain a maximal element. Since the element $x_\U\in X_\U$ cannot be maximal, there exists $y\in X$ such that $y_\U>x_\U=\rho_\U$. Therefore,  $y>\rho$, so that $\rho$ cannot be an upper bound for $X$.\e

\noindent{\bf Case 2b: $I\setminus T$ has a minimal element.}

Let $i=\min(I\setminus T)$. Consider the $i$-th projection of all $y=(y_j)_{j\in\I}\in X$, $y>x$:
$$
Y=\left\{y_i\mid y\in X,\ y>x\right\}\subset \R.
$$

Since the initial segment $S=T\cup\{i\}$ does not belong to $\mathcal{L}$, the set $X_\S$ does not contain a maximal element. By Lemma \ref{maxcomm}, all $y\in X$, $y>x$ have projection
$$
y_\S=(x_\T\mid y_i\mid y_{i_S}).
$$
Since $y_{i_S}$ takes a finite number of values $-1,0,1$, we deduce that the subset $Y\subset \R$ cannot contain a maximal element.
Thus, either $Y$ has no upper bound, or $\sup(Y)=a$ for a real number $a>Y$.

We claim that $\sup(X)=\ga$, for the following $\ga\in\gsme$:
$$
\ga=
\begin{cases}
(x_T,1,0,0\mid0),& \mbox{ if $Y$ not upper bounded},\\
(x_T,0,a,0\mid0),& \mbox{ if }\sup(Y)=a\not\in Q_i,\\
(x_T,0,a,-1\mid0),& \mbox{ if }\sup(Y)=a\in Q_i,\\
\end{cases}
$$
where the three entries right after $x_T$ are the coordinates of $\ga$ at the indices $i_T,i,i_S$, respectively.


It is obvious that $\ga$ is an upper bound of $X$. Let us show that it is the minimal upper bound of $X$ in $\gsme$.

Suppose that $\rho=(\rho_j)_{j\in\I}\in\gsme$ satisfies $x\le \rho<\ga$. Since $x_T=\ga_T$, this implies $\rho_T=x_T$. Thus, $0=x_{i_T}\le\rho_{i_T}\le \ga_{i_T}\le1$, and this implies $\rho_{i_T}=0$, because otherwise:
$$
\rho_{i_T}=1 \imp \rho=(x_T\mid1\mid0)\ge\ga,
$$
against our assumption.

If $Y$ is not upper bounded, there exists $y\in X$ such that $y_i>\rho_i$; thus $y>\rho$, and $\rho$ cannot be an upper bound for $X$.

If $\sup(Y)=a\in\R$, then $\rho<\ga$ implies $\rho_i\le a$. We claim that $\rho_i<a$. This inequality implies the existence of  $y\in X$ such that $\rho_i<y_i<a$; thus, $y>\rho$, and $\rho$ cannot be an upper bound for $X$. 

Indeed, suppose that $\rho_i=a$. If $a\not\in Q_i$, then $(x_T,a)$ is incommensurable and since $\rho\in\gsme$, it must be equal to the minimal incommensurable element $(x_T, a\mid0)=\ga$, against our assumption.

If $a\in Q_i$, then  $\rho_S=(x_T, a)=\ga_S$ is commensurable. Since $\rho<\ga$, we deduce that $\rho_{i_S}\le \ga_{i_S}=-1$, and this implies $\rho_{i_S}=-1$. Since $\rho\in\gsme$, it must be equal to  $(x_T,a\mid-1\mid0)=\ga$, against our assumption. 
\end{proof}

\begin{corollary}\label{incomm}
If a non-empty subset $X\subset \gsme$ contains no maximal element, then $\sup(X)$ is incommensurable.
\end{corollary}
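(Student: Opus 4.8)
The plan is to argue by contradiction, using that every element of $\gq$ has an immediate predecessor in $\gsme$. Suppose $X\subset\gsme$ has no maximal element but that $\ga:=\sup(X)$ is commensurable. The commensurable elements of $\gsme$ are exactly those of $\gq$: in the stratification $\gsme=\gq\sqcup\eqr(\g)\sqcup\incrg$, the elements of $\eqr(\g)$ are minimal incommensurable elements of $\rlex$, hence incommensurable, and the elements of $\incrg$ are incommensurable by construction (cf.\ Lemma \ref{CritIncr}). Thus $\ga=q$ for some $q\in\gq$.

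First I would observe that no $x\in X$ can equal $q$: since $q$ is an upper bound of $X$, the membership $q\in X$ would force $q=\max(X)$, against the hypothesis. Hence $x<q$ for every $x\in X$.

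Next, I would recall, from the discussion of the relative position of the increasing-rank elements (the case $S=I$), that $q^-\in\incrg\subset\gsme$ is the immediate predecessor of $q$; that is, $q^-=\max\left(\g_{\op{sme},<q}\right)$. Consequently every $\rho\in\gsme$ with $\rho<q$ satisfies $\rho\le q^-$. Applying this to each $x\in X$, which are all $<q$ by the previous paragraph, we see that $q^-$ is an upper bound of $X$ in $\gsme$. But $q^-<q=\sup(X)$, contradicting the minimality of the supremum. Therefore $\sup(X)\notin\gq$, i.e.\ $\sup(X)$ is incommensurable.

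I do not expect a genuine obstacle here. The only mildly delicate point is the identification of the commensurable elements of $\gsme$ with $\gq$, which follows from the explicit stratification of $\gsme$ together with the fact that rank-increasing extensions cannot be commensurable. Alternatively, one could deduce the corollary by inspecting the construction of $\sup(X)$ in the proof of Theorem \ref{completion} and checking directly that, in each of Cases 1, 2a and 2b, the supremum lands in $\eqr(\g)$ or in $\incrg$; the argument above has the advantage of avoiding that case analysis.
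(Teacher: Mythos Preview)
Your proof is correct and is essentially the same as the paper's: the paper also argues by contradiction, noting that if $\sup(X)=\ga\in\gq$ then the immediate predecessor $\ga^-\in\gsme$ would still be an upper bound of $X$ (since $\ga\notin X$), contradicting minimality. The paper also mentions, as you do, the alternative of inspecting the case analysis in the proof of Theorem~\ref{completion}.
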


\begin{proof}
Suppose that $\sup(X)=\ga\in\gq$. A commensurable element has an immediate predecessor $\ga^-$ in $\gsme$. Since $X$ does not contain $\ga$, this element $\ga^-$ is still an upper bound of $X$, and this contradicts the fact that $\ga$ is the minimal upper bound. 
\end{proof}

Alternatively, this corollary follows from the proof of Theorem \ref{completion}, since in all cases $\sup(X)$ was incommensurable.

\section{Applications to valuation theory}

As mentioned in the Introduction, we devote this section to stress the role of $\gsme$ in the description of the valuative tree associated to any valued field $(K,v)$:
$$
\as{1.3}
\begin{array}{ccc}
\ttt_v&\hra &\spv(\kx)\\
\downarrow&&\downarrow\\
\mbox{$[v]$}&\hra &\spv(K)
\end{array}
$$

\subsection{Parameterization of depth-zero paths in $\ttt_v$}\label{subsecPaths}

Let $\g=v(K^*)$ be the value group of $v$.

An element $[\mu]\in\ttt_v$ is an equivalence class of valuations $\mu$ on $\kx$ whose restriction to $K$ are equivalent to $v$. In other words, there exists an embedding of ordered groups $\iota\colon\g\hk\gm$, fitting into a commutative diagram 
\begin{equation}\label{iota}
\as{1.5}
\begin{array}{ccc}
\kx&\stackrel{\mu}\lra&\gm\infty\\
\uparrow&&\ \uparrow\mbox{\tiny$\iota$}\\
K&\stackrel{v}\lra&\g\infty
\end{array}
\end{equation}


\noindent{\bf Definition.} 
The extension $[\mu]/[v]$ is \emph{commensurable}, \emph{preserves the rank}, or \emph{increases the rank by one}, if the extension $\iota\colon \g\hk\g_\mu$ has this property, respectively. \e


Let $I=\pcv(\g)$. By Corollary \ref{embRlx}, we may fix an embedding  of ordered groups, 
$$
\ell\colon \g\hra\rlex\subset\rii. 
$$

\begin{proposition}\label{riiUniverse}
Every equivalence class in $\ttt_v$ contains some $\rii$-valued valuation 
$$
\nu\colon \kx\lra \rii\infty.
$$

If $[\nu]/[v]$ is commensurable, preserves the rank, or increases the rank by one, then $\ \gn\subset\gq$, $\ \gn\subset\rlex$, \ or $\ \gn\subset \R^{I_S}_{\lx}$ \ for some $S\in\inii$, respectively.
\end{proposition}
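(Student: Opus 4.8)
The plan is to assemble the structural lemmas of sections \ref{secSameRank} and \ref{secLargerRank}, splitting according to the type of the extension $\iota\colon\g\hk\gm$. By Theorem \ref{AllSmall} this extension is small, hence by Lemma \ref{small<=1} it is commensurable, preserves the rank, or increases the rank by one. In each case I will produce an order-embedding of ordered groups $\phi\colon\gm\hra\rii$ whose image lies in the appropriate subset of $\rii$, and then set $\nu:=\phi\circ\mu$, with $\phi(\infty)=\infty$.

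First I would record that $\nu$ is again a valuation on $\kx$ with the same support as $\mu$: conditions (0), (1), (2) hold because $\phi$ is an order-preserving group homomorphism. Its value group is $\gn=\phi(\gm)\subset\rii$, and $\phi$ restricts to an isomorphism of ordered groups $\gm\ism\gn$ fitting into the commutative triangle that witnesses $\mu\sim\nu$. Therefore $[\nu]=[\mu]$; in particular $\nu|_K\sim v$, so $[\nu]\in\ttt_v$ and $\nu$ lies in the given equivalence class. Everything thus reduces to constructing $\phi$ with the prescribed image.

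For the construction: if $[\mu]/[v]$ is commensurable, Lemma \ref{MaxComm} supplies the unique embedding $\gm\hk\gq$ extending $\g\hk\gq$, and composing with the embedding $\gq\hk\rlex$ through which $\ell$ factors, followed by $\rlex\subset\rii$, gives $\phi$ with $\gn\subset\gq$. If $[\mu]/[v]$ preserves the rank, then $\pcv(\gm)\simeq\pcv(\g)=I$ and Lemma \ref{MaxEqRk} supplies an embedding $\gm\hk\rlex$; composing with $\rlex\subset\rii$ gives $\phi$ with $\gn\subset\rlex$. If $[\mu]/[v]$ increases the rank by one, then $\pcv(\gm)\simeq I_S$ for a unique $S\in\inii$, and Lemma \ref{SemiUniv} supplies an embedding $\gm\hk\R^{I_S}_{\lx}$ fitting into the commutative diagram with $\g\hk\rlex\hk\R^{I_S}_{\lx}\hk\rii$; that embedding is $\phi$, and $\gn\subset\R^{I_S}_{\lx}$.

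The only delicate point, which I regard as the main obstacle, is matching these embeddings with the \emph{a priori} fixed $\ell$, so that the copies of $\gq$, $\rlex$ and $\R^{I_S}_{\lx}$ inside $\rii$ into which $\gm$ is sent are exactly the ones singled out by $\ell$. I would handle this by applying Lemma \ref{compatible} to the extension $\g\hk\gm$, choosing the immediate Hahn embeddings of the divisible hulls of $\g$ and of $\gm$ compatibly and arranging the bottom row of the resulting diagram to be $\ell$ composed with the inclusion $\rlex\subset\rii$; the three constructions above then become literally compatible with $\ell$. For the statement as phrased this is in fact more than is needed: since $[\nu]\in\ttt_v$ only requires $\nu|_K\sim v$, and $\pcv(\gm)$ by itself determines which of the three strata of $\rii$ the image of any admissible embedding occupies, the conclusion already follows once $\phi$ exists. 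Everything else is a direct citation of the preceding lemmas.
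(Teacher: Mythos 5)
Your proposal is correct and follows essentially the same route as the paper: invoke Theorem \ref{AllSmall} to see that $\g\hk\gm$ is small, use Lemmas \ref{MaxEqRk} and \ref{SemiUniv} (which already produce embeddings compatible with the fixed $\g\hk\rlex$) to obtain $\kappa\colon\gm\hk\rii$, and set $\nu=\kappa\circ\mu$, the equivalence being tautological. Your extra case-splitting and the appeal to Lemmas \ref{MaxComm} and \ref{compatible} only spell out what the paper dismisses as obvious.
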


\begin{proof}
Let $\mu$ be an arbitrary valuation on $\kx$ whose restriction to $K$ is equivalent to $v$. There is an embedding $\iota\colon \g\hk\gm$ fitting into a commutative diagram (\ref{iota}). By Theorem \ref{AllSmall}, this embedding $\iota$ is a small extension.

By Lemmas \ref{MaxEqRk} and \ref{SemiUniv}, there is an embedding $\kappa\colon \gm\hk\rii$ fitting into a commutative diagram:   
$$
\as{1.5}
\begin{array}{ccccc}
\kx&\stackrel{\mu}\lra&\gm\infty&&\\
\uparrow&&\ \uparrow\mbox{\tiny$\iota$}&\searrow\!\!\!\raise1.4ex\hbox{\tiny$\kappa$}&\\
K&\stackrel{v}\lra&\g\infty&\stackrel{\ell}\lra&\rii\infty
\end{array}
$$

Take $\nu$ to be the valuation on $\kx$ determined by the mapping $\kappa\circ \mu$. Its value group is $\gn=\kappa(\gm)\subset \rii$, and the tautological isomorphism $\kappa\colon \gm\ism\gn$ shows that the two valuations are equivalent.

The last statement of the proposition is obvious.
\end{proof}

Therefore, in order to describe $\ttt_v$ it is suficient to describe equivalence classs of $\rii$-valued valuations. If $\nu$ is such a valuation, the diagram (\ref{iota}) takes the form:
$$
\as{1.5}
\begin{array}{ccc}
\kx&\stackrel{\nu}\lra&\gn\infty\\
\uparrow&&\ \uparrow\mbox{\tiny$\ell$}\\
K&\stackrel{v}\lra&\g\infty
\end{array}
$$
where $\ell$ is our fixed embedding of $\g$ into $\rii$. 

By the last statement of Proposition \ref{riiUniverse}, $\gn\subset \bigcup_{S\in\inii}\R^{I_S}_{\lx}$.

Let us describe the equivalence classes of the valuations of \emph{depth zero}, in the terminology of \cite{MLV}. \e


\noindent{\bf Definition.}
For given $a\in K$ and $\dta\in \left(\bigcup_{S\in\inii}\R^{I_S}_{\lx}\right)\infty$, the depth-zero valuation $\nu=\om_{a,\ga}$ on $\kx$ is defined as 
$$
\nu\left(\sum\nolimits_{0\le s}a_s(x-a)^s\right) = \min\left\{\ell(v(a_s))+s\dta\mid0\le s\right\}.
$$

Note that $\om_{a,\infty}$ has non-trivial support $(x-a)\kx$, because $$\om_{a,\infty}(f)=\ell(v(f(a))),\quad \mbox{for all } \ f\in\kx.$$ On the other hand, for all $\dta\ne\infty$ the valuation $\om_{a,\dta}$ has trivial support. \e

\noindent{\bf Convention. }For simplicity in the exposition, we identify from now on the group $\g$ with its image in $\rii$. In this way, we omit any reference to the embedding $\ell$.\e 

With this simplification, the value group of $\om_{a,\dta}$ is: 
$$\g_{\om_{a,\dta}}=\begin{cases}
\g,&\quad\mbox{ if }\ \dta=\infty,\\
\gen{\g,\dta},&\quad\mbox{ otherwise}.
\end{cases}
$$

\begin{lemma}\label{ClassDth0}
Take $a\in K$, and $\dta,\ep\in\bigcup_{S\in\inii}\R^{I_S}_{\lx}$. The depth-zero valuations $\om_{a,\dta}$ and $\om_{a,\ep}$ are equivalent if and only if $\dta\sim_\sme\ep$.
\end{lemma}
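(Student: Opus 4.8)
The statement to prove is Lemma \ref{ClassDth0}: for fixed $a \in K$ and $\dta, \ep \in \bigcup_{S\in\inii}\R^{I_S}_{\lx}$, the depth-zero valuations $\om_{a,\dta}$ and $\om_{a,\ep}$ are equivalent if and only if $\dta \sim_\sme \ep$. The plan is to unwind both sides into concrete statements about the subgroups $\gen{\g,\dta}$ and $\gen{\g,\ep}$ of $\bigcup_{S\in\inii}\R^{I_S}_{\lx}$, and then match them. First I would handle the easy direction: assume $\dta \sim_\sme \ep$, so by definition there is an order isomorphism $h\colon \gen{\g,\dta} \ism \gen{\g,\ep}$ fixing $\g$ pointwise and sending $\dta$ to $\ep$. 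I claim $h$ fits into the equivalence diagram for valuations. The point is that every value $\om_{a,\dta}(f)$ for $f = \sum_s a_s(x-a)^s$ is the minimum of finitely many elements $v(a_s) + s\dta \in \gen{\g,\dta}$, and $h$ carries $v(a_s)+s\dta$ to $v(a_s) + s\ep$ because it fixes $\g$ and sends $\dta \mapsto \ep$; since $h$ preserves order it preserves minima, so $h(\om_{a,\dta}(f)) = \om_{a,\ep}(f)$ for every $f$. (One must also check the degenerate case $\dta = \infty$, but then both valuations have support $(x-a)\kx$ and agree on $\kx$ via $\ell\circ v\circ(\text{eval at }a)$, so they are trivially equivalent; actually the lemma as stated ranges $\dta,\ep$ over the group, not $\g_\infty$, so this case need not even be treated.) This shows $\om_{a,\dta} \sim \om_{a,\ep}$.

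For the converse, suppose $\om_{a,\dta} \sim \om_{a,\ep}$, witnessed by an order isomorphism $\varphi\colon \g_{\om_{a,\dta}} \ism \g_{\om_{a,\ep}}$ commuting with the two valuations. By the description above, the value group $\g_{\om_{a,\dta}}$ equals $\gen{\g,\dta}$ (for $\dta \ne \infty$), and similarly for $\ep$. Restricting $\varphi$ to $\g = v(K^*)$: since $\varphi \circ \om_{a,\dta} = \om_{a,\ep}$ and both valuations restrict on the constants $K^* \subset \kx$ to $v$, we get that $\varphi$ fixes $\g$ pointwise. So $\varphi\colon \gen{\g,\dta} \ism \gen{\g,\ep}$ is an order isomorphism fixing $\g$. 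It remains to see $\varphi(\dta) = \ep$. Evaluating on the polynomial $x - a$: $\om_{a,\dta}(x-a) = \min\{v(1) + 1\cdot\dta,\ v(-a) + 0\} = \min\{\dta,\ v(a)\}$, and likewise $\om_{a,\ep}(x-a) = \min\{\ep, v(a)\}$, so $\varphi$ sends $\min\{\dta,v(a)\}$ to $\min\{\ep,v(a)\}$. To isolate $\dta$ itself one uses a shifted polynomial: for any $c \in K^*$, $\om_{a,\dta}(x - a + c) = \min\{v(c),\ \dta\}$, and the image under $\varphi$ is $\min\{v(c),\ \varphi(\dta)\} = \om_{a,\ep}(x-a+c) = \min\{v(c),\ \ep\}$. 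Since $v(K^*) = \g$ is cofinal in neither direction inside $\gen{\g,\dta}$ in general but $\g$ does contain elements below any commensurable piece, the family of identities $\min\{\gamma, \dta\} = \min\{\gamma, \varphi(\dta)\}$ for all $\gamma \in \g$ — together with the fact that $\varphi$ fixes $\g$ and is an order isomorphism onto $\gen{\g,\ep}$ — forces $\varphi(\dta) = \ep$. (Concretely: if $\varphi(\dta) \ne \ep$, pick $\gamma \in \g$ strictly between them — possible when $\dta$ is commensurable, using $\gq \supset \g$ is not available, so instead argue directly that an order isomorphism of $\gen{\g,\dta}$ onto $\gen{\g,\ep}$ fixing $\g$ and sending the subgroup structure correctly must send the unique coset representative $\dta$ of a generator of $\gen{\g,\dta}/\g^{\com}$ to the corresponding one, which is $\ep$.) Hence $\varphi$ witnesses $\dta \sim_\sme \ep$.

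The main obstacle I anticipate is the last step of the converse — pinning down $\varphi(\dta) = \ep$ rather than merely $\varphi(\dta) \sim_\sme \ep$ or $\gen{\g,\dta} \cong \gen{\g,\ep}$. The cleanest route is probably to avoid coset-representative bookkeeping and instead exploit the test polynomials $x - a + c$ directly: the function $c \mapsto \om_{a,\dta}(x-a+c)$ is determined by $\dta$ (it equals $\min\{v(c),\dta\}$), the analogous function for $\ep$ is determined by $\ep$, and the equivalence $\varphi$ forces $\varphi(\min\{v(c),\dta\}) = \min\{v(c),\ep\}$ for all $c$; combining this with $\varphi|_\g = \mathrm{id}$ and the criterion of Lemma \ref{criteri} (in the incommensurable case) or Lemma \ref{bebe} (in the commensurable case) should pin $\dta$ and $\ep$ to the same $\sim_\sme$-class, and then the very definition of $\sim_\sme$ closes the argument. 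A secondary subtlety is making sure the value group $\g_{\om_{a,\dta}}$ really is all of $\gen{\g,\dta}$ and not a proper subgroup — this follows because $\om_{a,\dta}(x-a)$ and suitable $\om_{a,\dta}$-values of constants generate $\dta$ together with $\g$, which is exactly the computation $\om_{a,\dta}((x-a)) - \om_{a,\dta}(c) $-type manipulation on the cofinal part of $\g$, so it is routine but worth stating.
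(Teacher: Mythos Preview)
Your forward direction is correct and is exactly what the paper does (implicitly): the isomorphism $h$ witnessing $\dta\sim_\sme\ep$ sends each $v(a_s)+s\dta$ to $v(a_s)+s\ep$ and preserves minima, hence intertwines the two valuations.

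The converse, however, contains a computational slip that sends you down an unnecessarily tangled path. You write
\[
\om_{a,\dta}(x-a)=\min\{v(1)+\dta,\ v(-a)+0\}=\min\{\dta,v(a)\},
\]
but this is wrong: the $(x-a)$-expansion of the polynomial $x-a$ has constant coefficient $0$ (not $-a$) and linear coefficient $1$, so
\[
\om_{a,\dta}(x-a)=\min\{v(0)+0\cdot\dta,\ v(1)+1\cdot\dta\}=\min\{\infty,\dta\}=\dta.
\]
Once this is fixed, the commutative diagram $\varphi\circ\om_{a,\dta}=\om_{a,\ep}$ evaluated at $f=x-a$ gives $\varphi(\dta)=\ep$ on the nose. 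Together with $\varphi|_\g=\op{id}$ (from evaluating at constants), this is precisely the definition of $\dta\sim_\sme\ep$. That is the paper's entire argument for this direction; your discussion of test polynomials $x-a+c$, coset representatives, and the appeals to Lemmas~\ref{bebe} and~\ref{criteri} are all superfluous. Note also that your ``secondary subtlety'' about whether $\g_{\om_{a,\dta}}=\gen{\g,\dta}$ dissolves for the same reason: $\dta$ is literally the value of $x-a$, and $\g$ is the image of the constants, so the value group is exactly $\gen{\g,\dta}$.
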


\begin{proof}
We have  $\om_{a,\dta}\sim\om_{a,\ep}$ if and only if there exists an order-preserving isomorphism $$\varphi\colon \ggd=\g_{\om_{a,\dta}}\iso \g_{\om_{a,\ep}}=\gge,$$ fitting into a commutative diagram
$$
\as{1.4}
\begin{array}{ccc}
\ggd\infty&\stackrel{\varphi}\lra\ &\!\!\gge\infty\\
\quad\ \mbox{\scriptsize$\om_{a,\dta}$}&\nwarrow\ \nearrow&\!\!\!\mbox{\scriptsize$\om_{a,\ep}$}\quad\\
&\kx&
\end{array}
$$
This is equivalent to $\varphi$ acting as the identity on $\g$ and mapping $\dta$ to $\ep$. In other words, it is equivalent to $\dta\sim_\sme\ep$.
\end{proof}

Therefore, for any fixed $a \in K$, the set $\gsme\infty$ parameterizes a certain path in $\ttt_v$, represented by the valuations $\om_{a,\dta}$ with $\dta\in\gsme\infty$:

\begin{center}
\setlength{\unitlength}{4mm}
\begin{picture}(16,4)
\put(-2,1){$\bullet$}\put(0.25,1.3){\line(1,0){16}}\put(18,1){$\bullet$}
\put(6,1){$\bullet$}\put(20,1){$\bullet$}
\put(-3,2){\begin{footnotesize}$\om_{a,-\infty}$\end{footnotesize}}
\put(-1,1){\begin{footnotesize}$\cdots$\end{footnotesize}}
\put(16.5,1){\begin{footnotesize}$\cdots$\end{footnotesize}}
\put(17,2){\begin{footnotesize}$\om_{a,\infty^-}$\end{footnotesize}}
\put(21,1){\begin{footnotesize}$\om_{a,\infty}$\end{footnotesize}}
\put(6,2){\begin{footnotesize}$\om_{a,\dta}$\end{footnotesize}}
\end{picture}
\end{center}\e

Moreover, $[\om_{a,\dta}]/[v]$ is commensurable if and only if $\dta\in\gq$. Also, $[\om_{a,\dta}]/[v]$ preserves the rank if and only if $\dta\in\grr$.

The minimal class  $[\om_{a,-\infty}]$ is represented by the valuation $\minf:=\om_{a,-\infty}$, which does not depend on $a$:
$$
\minf\colon\kx\longtwoheadrightarrow \left(\Z\times\g\right)\infty,\qquad f\longmapsto \left(\ord_\infty(f),v(\lc(f))\right), 
$$
where $\lc(f)$ is the leading coefficient of a non-zero polynomial $f$.

Actually, we shall show elsewhere that $\tt_v$ is an oriented connected tree admiting the equivalence class of $\minf$ as the unique root node.

The maximal class $[\om_{a,\infty^-}]$ is represented by the valuation
$$
\om_{a,\infty^-}\colon\kx\longtwoheadrightarrow \left(\Z\times\g\right)\infty,\qquad f\longmapsto \left(\ord_{x-a}(f),v(\init(f))\right), 
$$
where $\init(f)$ is the first  non-zero coefficient of the $(x-a)$-expansion of $f\in\kx$.
\e

What is the relative position of the paths corresponding to two different elements $a,b\in K$? 

\begin{lemma}\label{classif}
Let $a,b\in K$ and $\dta,\ep\in  \bigcup_{S\in\inii}\R^{I_S}_{\lx}$.

\begin{enumerate}
\item $\ \om_{a,\dta}=\om_{b,\ep}\ \sii\ v(b-a)\ge\dta=\ep$.
\item $\ \om_{a,\dta}\sim\om_{b,\ep}\ \sii\ v(b-a)\ge[\dta]_\sme=[\ep]_\sme$.
\end{enumerate}
\end{lemma}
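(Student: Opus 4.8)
The plan is to prove both statements by a direct computation with the definition of $\om_{a,\dta}$, using the $(x-b)$-expansion of the binomial $(x-a)^s$. The key observation is that for any $f\in\kx$, writing $f=\sum_{0\le s}a_s(x-a)^s$, we can re-expand each power $(x-a)^s=\bigl((x-b)+(b-a)\bigr)^s=\sum_{0\le t\le s}\binom{s}{t}(b-a)^{s-t}(x-b)^t$, so that the $(x-b)$-coefficients of $f$ are $\Z$-linear combinations of the $a_s$ times powers of $(b-a)$. First I would record the inequality that always holds: for every polynomial $f$,
\begin{equation}\label{ineqom}
\om_{b,\ep}(f)\ \ge\ \min\{\om_{a,\dta}(f),\ v(b-a)+\om_{a,\dta}(f)\cdots\}
\end{equation}
— more precisely, I will show that if $v(b-a)\ge\dta$ then $\om_{a,\dta}=\om_{b,\dta}$ as functions on $\kx$. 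Indeed, expanding $f$ in powers of $(x-b)$ and using the ultrametric inequality (2) of a valuation together with $v\bigl((b-a)^{s-t}\bigr)=(s-t)v(b-a)\ge(s-t)\dta$, one gets $\om_{b,\dta}(f)\ge\min_s\{v(a_s)+s\dta\}=\om_{a,\dta}(f)$; the reverse inequality is obtained symmetrically by expanding in powers of $(x-a)$, since $v(a-b)=v(b-a)\ge\dta$ as well. This symmetry is what makes the hypothesis $v(b-a)\ge\dta$ natural, and it immediately gives one direction of (1) once we also note $\dta=\ep$ is forced (evaluate both valuations on $x-a$: $\om_{a,\dta}(x-a)=\dta$, while $\om_{b,\ep}(x-a)=\min\{v(b-a),\ep\}$, and if these agree and $v(b-a)\ge\dta$ then one checks $\dta=\ep$ by also evaluating on $x-b$).

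For the converse direction of (1), assume $\om_{a,\dta}=\om_{b,\ep}$. Evaluating on $x-a$ gives $\dta=\min\{v(b-a),\ep\}$, and evaluating on $x-b$ gives $\ep=\min\{v(a-b),\dta\}=\min\{v(b-a),\dta\}$. From these two equalities one deduces $\dta=\ep$ and $v(b-a)\ge\dta$: if $v(b-a)<\dta$ then the first equality forces $\dta=v(b-a)$, a contradiction, so $v(b-a)\ge\dta$, whence $\dta=\min\{v(b-a),\ep\}$ gives $\dta\le\ep$, and symmetrically $\ep\le\dta$. This closes (1). Here I should be slightly careful about the order structure: these $\min$'s are taken in $\bigl(\bigcup_{S}\R^{I_S}_{\lx}\bigr)\infty$, which is totally ordered (as established when constructing $\rii$), so the manipulations are legitimate.

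For (2), I would reduce to (1) via Lemma~\ref{ClassDth0} and the defining property of $\sim_\sme$. By Lemma~\ref{ClassDth0}, $\om_{a,\dta}\sim\om_{a,\ep}$ iff $\dta\sim_\sme\ep$, so it suffices to show $\om_{a,\dta}\sim\om_{b,\ep}$ iff $v(b-a)\ge[\dta]_\sme=[\ep]_\sme$ (the inequality $v(b-a)\ge[\dta]_\sme$ meaning $v(b-a)\ge\dta'$ for the representative, or equivalently for every member of the class — here item (2) of Lemma~\ref{density} guarantees that comparing $v(b-a)\in\g\subset\gq$ with a whole $\sim_\sme$-class is unambiguous). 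For the ``$\Leftarrow$'' direction: if $[\dta]_\sme=[\ep]_\sme$ and $v(b-a)\ge$ this class, pick the minimal incommensurable (or commensurable) representative $\dta_0$ in the class; then $v(b-a)\ge\dta_0$, so part (1) gives $\om_{a,\dta_0}=\om_{b,\dta_0}$, while $\om_{a,\dta}\sim\om_{a,\dta_0}\sim\om_{b,\dta_0}\sim\om_{b,\ep}$ by Lemma~\ref{ClassDth0}. For ``$\Rightarrow$'': if $\om_{a,\dta}\sim\om_{b,\ep}$, the restriction of an equivalence of valuations to the value subgroups must carry $v(b-a)$ (a common value, coming from $K$) to itself and must identify the respective images of $x-a$; chasing the diagram one finds an isomorphism $\gen{\g,\dta}\ism\gen{\g,\ep}$ fixing $\g$ and sending $\min\{v(b-a),\dta\}$ to $\min\{v(b-a),\ep\}$, from which $v(b-a)\ge[\dta]_\sme$ follows by the criterion of Lemma~\ref{criteri} (no element of $\g^\com=\gq$ can lie strictly between $\dta$ and $\ep$ if they are $\sim_\sme$-equivalent, and $v(b-a)$ being such an element forces $v(b-a)\ge\dta$).

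The main obstacle I anticipate is the bookkeeping in ``$\Rightarrow$'' of (2): extracting from an abstract equivalence $\varphi\colon\g_{\om_{a,\dta}}\ism\g_{\om_{b,\ep}}$ of ordered groups the precise inequality $v(b-a)\ge[\dta]_\sme$, rather than just $\dta\sim_\sme\ep$. The subtlety is that $\dta$ and $\ep$ sit in \emph{a priori different} ambient groups $\gen{\g,\dta}$ and $\gen{\g,\ep}$, and one must use that $\om_{a,\dta}(x-b)=\min\{v(b-a),\dta\}$ is an intrinsically defined element of $\g_{\om_{a,\dta}}$ whose image under $\varphi$ is $\om_{b,\ep}(x-b)=v(b-a)$; comparing $\varphi(\dta)$, which is $\om_{b,\ep}(x-a)=\min\{v(b-a),\ep\}$, with $\ep$ then pins down the inequality. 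Getting this comparison right — and making sure the same computation handles uniformly the commensurable case $\dta\in\gq$ (where Lemma~\ref{bebe} applies) and the incommensurable case (where Lemma~\ref{criteri} applies) — is where the real care is needed; everything else is the routine binomial expansion of the first paragraph.
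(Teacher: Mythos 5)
Your overall route is the same as the paper's: item (1) is taken as the elementary computation (the paper simply calls it ``well-known'', so your binomial-expansion argument is a welcome filling-in, and it is correct), and item (2) is deduced in the ``$\Leftarrow$'' direction exactly as in the paper, by passing to the canonical representative $\be\in\gsme$ via Lemma~\ref{ClassDth0} and then applying item (1). For ``$\Rightarrow$'' of (2) your strategy also matches the paper's, but your sketch contains two slips that would derail a careful writeup. First, $\om_{b,\ep}(x-b)=\ep$, not $v(b-a)$; and $\varphi$ does not send $\min\{v(b-a),\dta\}$ to $\min\{v(b-a),\ep\}$ --- it sends $\om_{a,\dta}(x-b)=\min\{v(b-a),\dta\}$ to $\om_{b,\ep}(x-b)=\ep$, and sends $\dta=\om_{a,\dta}(x-a)$ to $\om_{b,\ep}(x-a)=\min\{v(b-a),\ep\}$. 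The clean conclusion, which is what the paper does, is to combine $\varphi(\dta)=\min\{v(b-a),\ep\}\le\ep$ with $\varphi^{-1}(\ep)=\min\{v(b-a),\dta\}\le\dta$; applying the order-preserving $\varphi$ to the second inequality gives $\ep\le\varphi(\dta)$, hence $\varphi(\dta)=\ep$, which yields $\dta\sim_\sme\ep$ and simultaneously $v(b-a)\ge\ep$ and $v(b-a)\ge\dta$. The final upgrade to $v(b-a)\ge[\dta]_\sme$ then splits into the two cases you anticipate: Lemma~\ref{bebe} when $\dta\in\gq$, and Lemma~\ref{density}(2) when $\dta\notin\gq$ (where $v(b-a)>\dta$ strictly). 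Your alternative appeal to Lemma~\ref{criteri} at the end (``no element of $\gq$ lies strictly between $\dta$ and $\ep$'') does not by itself locate $v(b-a)$ relative to the class, so the $\varphi(\dta)=\ep$ computation is really needed.
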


\begin{proof}
Item (1) is well-known. Let us prove (2).

Suppose that $\om_{a,\dta}\sim\om_{b,\ep}$. Then, there exists an order-preserving isomorphism $\varphi\colon \ggd\iso \gge$, fitting into a commutative diagram
$$
\as{1.4}
\begin{array}{ccc}
\ggd\infty&\stackrel{\varphi}\lra\ &\!\!\gge\infty\\
\quad\ \mbox{\scriptsize$\om_{a,\dta}$}&\nwarrow\ \nearrow&\!\!\!\mbox{\scriptsize$\om_{b,\ep}$}\quad\\
&\kx&
\end{array}
$$
Since $\left(\om_{a,\dta}\right)_{\mid K}=\left(\om_{b,\ep}\right)_{\mid K}$, the isomorphism $\varphi$ acts as the identity on $\g$. On the other hand, 
$$\varphi(\dta)=\om_{b,\ep}(x-a)= \min\{\ep,v(b-a)\}\le\ep.
$$
$$\varphi^{-1}(\ep)=\om_{a,\dta}(x-b)= \min\{\dta,v(b-a)\}\le\dta.
$$
Hence, $\varphi(\dta)=\ep$, and this implies $[\dta]_\sme=[\ep]_\sme$ by the definition of $\sim_\sme$.

A posteriori, we deduce from $\varphi(\dta)=\ep$ that $v(b-a)\ge\dta$. If $\dta\in \gq$,  then  Lemma \ref{bebe} shows that $v(b-a)\ge\{\dta\}=[\dta]_\sme$.

If $\dta\not\in\gq$, then $v(b-a)>\dta$ and this implies $v(b-a)>[\dta]_\sme$ by Lemma \ref{density},(2).\e

Conversely, suppose that $v(b-a)\ge[\dta]_\sme=[\ep]_\sme$. Let $\be\in\gsme$ the unique element in $\gsme$ lying in the class $[\dta]_\sme=[\ep]_\sme$. By Lemma \ref{ClassDth0}, 
$$
\om_{a,\dta}\sim \om_{a,\be},\qquad \om_{b,\ep}\sim \om_{b,\be}.
$$
Now, the condition $v(b-a)\ge\be$ implies $\om_{a,\be}=\om_{b,\be}$ by item (1). Thus, $\om_{a,\dta}\sim \om_{b,\ep}$. 
\end{proof}

\begin{corollary}\label{classif0}
 Let $a,b\in K$ and let $\dta,\ep\in\gsme$. We have
 $$
 \om_{a,\dta}\sim \om_{b,\ep}  \ \sii\ 
 \om_{a,\dta}=\om_{b,\ep}  \ \sii\ v(b-a)\ge \dta=\ep.
 $$
\end{corollary}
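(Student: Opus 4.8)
The plan is to derive this corollary directly from Lemma~\ref{classif}, the only extra ingredient being that $\gsme$ was constructed precisely as a system of representatives of $\left(\bigcup_{S\in\inii}\R^{I_S}_{\lx}\right)/\!\sim_\sme$. In particular, two elements of $\gsme$ lie in the same $\sim_\sme$-class if and only if they coincide, so in this situation equality of classes collapses to equality of elements.

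First I would record this observation: since $\dta,\ep\in\gsme$, the equality $[\dta]_\sme=[\ep]_\sme$ is equivalent to $\dta=\ep$. Then I would invoke Lemma~\ref{classif}(2), which gives
$$
\om_{a,\dta}\sim\om_{b,\ep}\ \sii\ v(b-a)\ge[\dta]_\sme=[\ep]_\sme,
$$
and rewrite the right-hand condition, using the observation, as $v(b-a)\ge\dta=\ep$. This already yields the equivalence between the first and the third conditions of the statement. Finally, Lemma~\ref{classif}(1) states that $\om_{a,\dta}=\om_{b,\ep}$ if and only if $v(b-a)\ge\dta=\ep$, which supplies the equivalence with the middle condition. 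Chaining the two gives the full three-way equivalence; note that the implication $\om_{a,\dta}=\om_{b,\ep}\imp\om_{a,\dta}\sim\om_{b,\ep}$ is trivial, so the real content is that equivalent depth-zero valuations whose parameters are taken in $\gsme$ are in fact equal.

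I do not anticipate a genuine obstacle here. The one point deserving a line of justification is the canonical identification of $\gsme$ with the set of $\sim_\sme$-classes, i.e. that each $\sim_\sme$-class meets $\gsme$ in exactly one point; this is the defining property of $\gsme$ established in section~\ref{secLargerRank}, and Lemma~\ref{density}(2) moreover shows that the order inherited by $\gsme$ is the canonical one. Everything else is a formal consequence of Lemma~\ref{classif}.
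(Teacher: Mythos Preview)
Your proposal is correct and follows exactly the route the paper intends: the corollary is stated without proof immediately after Lemma~\ref{classif}, and the intended argument is precisely that for $\dta,\ep\in\gsme$ the condition $[\dta]_\sme=[\ep]_\sme$ in Lemma~\ref{classif}(2) collapses to $\dta=\ep$, after which both parts of Lemma~\ref{classif} give the same right-hand side. The only small point you might make explicit is why the inequality $v(b-a)\ge[\dta]_\sme$ (an inequality between an element and a subset) reduces to $v(b-a)\ge\dta$: this is because $v(b-a)\in\g\infty\subset\gq\infty$, so either $v(b-a)=\infty$, or $[v(b-a)]_\sme=\{v(b-a)\}$ and Lemma~\ref{density}(2) applies.
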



Therefore,  the depth-zero paths in $\ttt_v$ determined by any two $a,b\in K$ coincide for all parameters $\dta\in\gsme$ in the interval $[-\infty,v(a-b)]\subset\gsme$. 

\begin{center}
\setlength{\unitlength}{4mm}
\begin{picture}(22,8)
\put(-2,1){$\bullet$}\put(4.75,1){$\bullet$}
\put(11,1){$\bullet$}\put(11,3.16){$\bullet$}

\put(18,1){$\bullet$}\put(20,1){$\bullet$}

\put(18,5.4){$\bullet$}\put(20,6.06){$\bullet$}

\put(0.25,1.3){\line(1,0){16}}\put(5,1.3){\line(3,1){11.3}}
\put(16.66,4.48){$\dot{}$}\put(16.99,4.59){$\dot{}$}\put(17.32,4.7){$\dot{}$}
\put(-4,1.1){\begin{footnotesize}$\minf$\end{footnotesize}}
\put(-1,1){\begin{footnotesize}$\cdots$\end{footnotesize}}
\put(16.5,1){\begin{footnotesize}$\cdots$\end{footnotesize}}
\put(17,0){\begin{footnotesize}$\om_{a,\infty^-}$\end{footnotesize}}
\put(17,6.4){\begin{footnotesize}$\om_{b,\infty^-}$\end{footnotesize}}
\put(21,1.1){\begin{footnotesize}$\om_{a,\infty}$\end{footnotesize}}
\put(21,6.16){\begin{footnotesize}$\om_{b,\infty}$\end{footnotesize}}
\put(2.6,2){\begin{footnotesize}$\om_{b,v(a-b)}$\end{footnotesize}}
\put(2.6,0){\begin{footnotesize}$\om_{a,v(a-b)}$\end{footnotesize}}
\put(10.6,0){\begin{footnotesize}$\om_{a,\dta}$\end{footnotesize}}
\put(10.6,4.2){\begin{footnotesize}$\om_{b,\dta}$\end{footnotesize}}
\end{picture}
\end{center}\e

\subsection{Topological interpretation of valuations on $\kx$}\label{subsecAnalysis}

Let $\mu\colon\kx\to\La\infty$ be a valuation on $\kx$ with trivial support, whose restriction to $K$ is equivalent to $v$. 

Consider the well-known inequality \cite[Thm. 3.4.3]{valuedfield}:
\begin{equation}\label{ineq}
\rrk(\gm/\g)+\op{tr.deg}(\km/k_v)\le 1.
\end{equation}

The valuations for which equality holds in  (\ref{ineq}) are called \emph{valuation-transcendental} in \cite{Kuhl}. Equivalently, they are \emph{finite-depth} valuations in the terminology of \cite{MLV}, or \emph{``bien specifi\'ee"} valuations in the terminology of \cite{Vaq2}.

These valuation-transcendental valuations may be characterized too, as the restrictions to $\kx$ of depth-zero valuations on $\kb[x]$ \cite[Sec. 3]{Kuhl}, \cite[Sec. 3]{Vaq2}.

On the other hand,  depth-zero valuations on $\kb[x]$ have a natural topological interpretation. 

Let us fix  an extension  $\vb$ of $v$ to $\kb$. The value group of $\vb$ is $\g_{\vb}=\gq$ and the residue field $k_{\vb}$ is an algebraic closure of $k_v$.

For $a\in\kb$ and $\dta\in\La$, consider the ultrametric ball with center $a$ and radius $\dta$:
$$
B(a,\dta)=\{c\in \kb\mid \vb(c-a)\ge \dta\}\subset \kb.
$$
Two ultrametric balls of the same radius, are either disjoint or they coincide.

The pair $(a,\dta)$ determines a depth-zero valuation on $\kb[x]$ too:
$$
\om_{a,\dta}\left(\sum\nolimits_{0\le s}a_s(x-a)^s\right) = \min\left\{\vb(a_s)+s\dta\mid0\le s\right\}.
$$
By Lemma \ref{classif}, $\ \om_{a,\dta}=\om_{b,\ep}$ if and only if $B(a,\dta)=B(b,\ep)$. Thus, depth-zero valuations on $\kb[x]$ are objects intrinsically associated to ultrametric balls in $\kb$.

By the results of section \ref{subsecPaths}, every depth-zero valuation on $\kb[x]$ is equivalent to $\om_{a,\dta}$, for some $\dta\in\gsme$.

Our aim in this section, is the following characterization of the valuation-transcen\-den\-tal valuations on $\kx$.

\begin{proposition}
Let $\mb=\om_{a,\dta}$ for some $a\in \kb$ and $\dta\in\gsme$. Let $\mu$ be the valuation on $\kx$ obtained by restriction of $\mb$. Then,
for all $f\in\kx$, we have:

\begin{enumerate}
\item If $\dta\in\gq$, then  $\ \mu(f)=\min\{\vb(f(b))\mid b\in B_{a,\dta}\}$.
\item If $\dta\not\in\gq$, then $\  \mu(f)\sim_\sme\,\inf\{\vb(f(b))\mid b\in B_{a,\dta}\}$.
\end{enumerate}
\end{proposition}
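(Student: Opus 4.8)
The plan is to reduce everything to the case $\dta\in\gq$ by first analyzing $f\in\kb[x]$ rather than $f\in\kx$, then descending. Write $f=\sum_{0\le s}a_s(x-a)^s$ with $a_s\in\kb$, so that by definition $\mb(f)=\om_{a,\dta}(f)=\min_{s}\{\vb(a_s)+s\dta\}$. The first step is the elementary ultrametric observation: for any $b$ with $\vb(b-a)\ge\dta$ one has $\vb(f(b))=\vb\big(\sum_s a_s(b-a)^s\big)\ge\min_s\{\vb(a_s)+s\,\vb(b-a)\}\ge\min_s\{\vb(a_s)+s\dta\}=\mb(f)$, using $\dta\le\vb(b-a)$ and $s\ge0$. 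Hence $\mb(f)$ is always a lower bound for $\{\vb(f(b))\mid b\in B(a,\dta)\}$. For (1), when $\dta\in\gq=\g_{\vb}$, I would exhibit $b\in B(a,\dta)$ with $\vb(f(b))=\mb(f)$: pick an index $s_0$ achieving the minimum, choose $c\in\kb$ with $\vb(c)=\dta$ (possible as $\dta\in\g_{\vb}$), and then argue that for a ``generic'' choice $b=a+c\,u$ with $u$ a $\vb$-unit whose residue is suitably generic in $k_{\vb}$ (an infinite field, being algebraically closed), no cancellation occurs among the terms $a_s(b-a)^s$ of minimal value; this is the standard argument that $\om_{a,\dta}$ equals the infimum valuation when the radius is in the value group, and it uses that $k_{\vb}$ is infinite so that a nonzero polynomial over $k_{\vb}$ has a nonvanishing value.

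For (2), when $\dta\notin\gq$, there is no $b\in B(a,\dta)$ of value exactly $\dta$ over $a$, but I would instead show the infimum $\inf\{\vb(f(b))\mid b\in B(a,\dta)\}$, computed in the ordered group $\R^\I_{\lx}$ (or in the completion $\gsme$, using Theorem \ref{completion} to guarantee it exists), lies in the same $\sim_\sme$-class as $\mb(f)$. The key point is that as $b$ ranges over $B(a,\dta)$, the quantity $\vb(b-a)$ ranges over all elements of $\gq$ that are $\ge\dta$; plugging into the lower-bound computation above and letting $\vb(b-a)$ decrease toward $\dta$, the value $\min_s\{\vb(a_s)+s\,\vb(b-a)\}$ decreases toward $\min_s\{\vb(a_s)+s\dta\}=\mb(f)$, and by density of $\gq$ in $\gsme$ (Proposition \ref{GQdense}) together with Lemma \ref{density}(2), this infimum is $\sim_\sme$-equivalent to $\mb(f)$ even though it need not equal it. More precisely I would use Lemma \ref{criteri}: no element of $\g_{\vb}^{\op{com}}=\gq$ can sit strictly between $\mb(f)$ and this infimum, because any such element would have to equal $\min_s\{\vb(a_s)+s\gamma\}$ for some intermediate $\gamma$, contradicting that the infimum is already a lower bound.

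Finally I descend from $\kb[x]$ to $\kx$: for $f\in\kx\subset\kb[x]$, the valuation $\mu$ is by definition the restriction of $\mb$, so $\mu(f)=\mb(f)$, and the formulas for $\mb(f)$ just established apply verbatim. (One subtlety: the value group of $\mu$ is $\gm\subset\g_{\mb}=\gq\ggd$, and the $\sim_\sme$ in the statement is taken relative to $\g=v(K^*)$; since $\g\subset\gq$ and the relevant elements $\mb(f)$, $\inf\{\vb(f(b))\}$ all lie in $\gen{\gq,\dta}$, Lemma \ref{bebe}(1) lets me pass freely between $\g$-equivalence and $\gq$-equivalence.) The main obstacle I anticipate is case (1): carefully producing a point $b\in B(a,\dta)$ realizing the minimum, i.e.\ controlling the residual cancellation, which requires the genericity argument over the infinite residue field $k_{\vb}$ and is where a little honest computation is unavoidable; case (2) is then a soft consequence of density and the completeness of $\gsme$.
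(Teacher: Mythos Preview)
Your plan is correct and follows essentially the same approach as the paper: the lower bound $\vb(f(b))\ge\mb(f)$ via the $(x-a)$-expansion, the residue-field genericity argument to realize the minimum when $\dta\in\gq$, and then for $\dta\notin\gq$ an approximation through radii $\rho\in\gq$ with $\rho>\dta$ combined with density of $\gq$ and Lemma~\ref{criteri}.

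Two small points of comparison. First, in case (2) the paper does not invoke case (1) as a black box; instead it singles out the unique minimizing index $s_0$ and an explicit threshold $\ep=\min_{s<s_0}(\vb(a_s)-\vb(a_{s_0}))/(s_0-s)$, observing that for any $\rho\in\gq$ with $\dta<\rho<\ep$ the term $a_{s_0}(b-a)^{s_0}$ is the \emph{strict} minimum, so \emph{every} $b$ with $\vb(b-a)=\rho$ already satisfies $\vb(f(b))=\vb(a_{s_0})+s_0\rho$ and no genericity is needed. Your route via case (1) works too, but this direct argument is cleaner. Second, your final sentence (``any such element would have to equal $\min_s\{\vb(a_s)+s\gamma\}$ for some intermediate $\gamma$, contradicting that the infimum is already a lower bound'') has the logic slightly reversed: the point is not that a hypothetical $q\in\gq$ with $\ga<q<\eta$ must \emph{equal} some $\om_{a,\rho}(f)$, but rather that one can \emph{produce} $\rho\in\gq$, $\rho>\dta$, with $\om_{a,\rho}(f)<q$ (take $\rho<(q-\vb(a_{s_0}))/s_0$), and hence a $b\in B(a,\dta)$ with $\vb(f(b))<q<\eta$, contradicting that $\eta$ is the infimum. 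With that reordering your argument goes through.
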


\begin{proof}
Let $f=\sum_{0\le s}a_s(x-a)^s$ be the $(x-a)$-expansion of some non-zero $f\in\kx$.

Let us denote $B=B(a,\dta)$. For all $b\in B$, we have
\begin{equation}\label{generic}
\vb(f(b))\ge \min\{\vb(a_s(b-a)^s)\mid 0\le s\}\ge \min\{\vb(a_s)+s\dta\mid 0\le s\}=\mu(f).
\end{equation}

Let $S$ be the set of indices $s$ such that $\mu(f)=\vb(a_s)+s\dta$.

If $0\in S$, then $\mu(f)=\vb(a_0)=\vb(f(a))$. Since $a\in B$, both statements of the proposition hold, in this case.

From now on, we suppose $0\not\in S$. Let us first discuss the case $\dta\in\gq$.

In this case, $\ga:=\mu(f)$ belongs to $\gq$ too. Let us take $z,u,c\in\kb$ such that:
$$
\vb(z)=0,\qquad \vb(u)=\dta,\qquad \vb(c)=\ga. 
$$

Take $b=a+zu$. Since $\vb(b-a)=\vb(zu)=\dta$, we have $b\in B$. Item (1) will be proved, if we show that $\vb(f(b))=\ga$.

Consider the polynomial:
$$
f_0=\sum_{s\in S}a_s(x-a)^s.
$$
For all $s\not\in S$ we have $\vb(a_s(b-a)^s)=\vb(a_s)+s\dta>\ga$. Hence, 
$$
\vb(f(b))=\ga \ \sii\ \vb(f_0(b))=\ga.
$$

For all $s\in S$, we have $\vb(a_su^s)=\vb(a_s)+s\dta=\ga$. Hence, $\vb(a_su^sc^{-1})=0$. Consider the class of this element modulo the maximal ideal of $\vb$:
$$
\zeta_s:=\overline{a_su^sc^{-1}}\in k_{\vb}.
$$

By (\ref{generic}), $\vb(f_0(b))\ge \ga$. Hence,
\begin{align*}
\vb(f_0(b))>\ga\ &\;\sii\ \vb\left(\sum_{s\in S}a_su^sz^s \right)>\ga\ \sii\ \vb\left(\sum_{s\in S}(a_su^sc^{-1})z^s \right)>0\\
&\; \sii \ \sum_{s\in S}\zeta_s \,\overline{z}^s=0.
\end{align*}

Therefore, any choice of $z\in\kb^*$ such that $\vb(z)=0$ and $\overline{z}\in k_{\vb}^*$ is not a root of the polynomial $\sum_{s\in S}\zeta_sx^s\in k_{\vb}[x]$, leads to some $b\in B$ for which $\vb(f_0(b))=\ga$.

This ends the proof of (1).\e

Suppose now $\dta\not\in\gq$. In particular, $\vb(b-a)>\dta$ for all $b\in B$.

In this case, $S=\{s_0\}$ is a one-element set, because for all $s\ne t$ we have
$$
\vb(a_s)+s\dta=\vb(a_t)+t\dta\ \imp \dta=(\vb(a_s)-\vb(a_t))/(t-s)\in\gq.
$$
Also, $\ga=\mu(f)=\vb(a_{s_0})+s_0\dta$ does not belong to $\gq$. 

Since the case $0\in S$ has been analyzed before, we may assume $s_0>0$.
Consider
$$
\ep:=\min\left\{\dfrac{\vb(a_s)-\vb(a_{s_0})}{s_0-s}\ \Big|\ 0\le s<s_0\right\}\in\gq.
$$

We have $\ep>\dta$, because for all $0\le s<s_0$: 
$$
\vb(a_s)+s\dta>\vb(a_{s_0})+s_0\dta.
$$

Item (2) follows immediately from the following Claim.\e

\noindent{\bf Claim. }If $\be\in\gsme$ satisfies $\ga<\be$ and $[\be]_\sme\ne [\ga]_\sme$, there exists $b\in B$ such that 
$$
\ga<\vb(f(b))<\be.
$$

Let us prove the Claim. We have $$\ga=\vb(a_{s_0})+s_0\dta<\vb(a_{s_0})+s_0\ep$$ and $[\ga]_\sme\ne[\vb(a_{s_0})+s_0\ep]_\sme$, because $\ga\not\in\gq$ but 
$\vb(a_{s_0})+s_0\ep\in\gq$. Hence,
$$
\ga<\min\{\be,\vb(a_{s_0})+s_0\ep\}\quad\mbox{ and }\quad [\ga]_\sme\ne [\min\{\be,\vb(a_{s_0})+s_0\ep\}]_\sme. 
$$
By Lemma \ref{density}, there exists $q\in\gq$ such that 
$$
\ga<q<\min\{\be,\vb(a_{s_0})+s_0\ep\}.
$$
Let us write $q=\vb(a_{s_0})+s_0\rho$ for an adequate $\rho\in\gq$. In this way, we have:
$$
\dta<\rho<\ep.
$$

Take any $b\in\kb$ such that $\vb(b-a)=\rho$. Since $\rho>\dta$, we have $b\in B$.

The Claim will be proven if we show that $$\vb(f(b))=\vb(a_{s_0}(b-a)^{s_0})=\vb(a_{s_0})+s_0\rho=q.$$ 

Since, $f(b)=\sum_{0\le s}a_s(b-a)^s$, it suffices to show that
$$\vb(a_s(b-a)^s)>\vb(a_{s_0}(b-a)^{s_0}),\quad \mbox{ for all }s\ne s_0.
$$
For $s<s_0$ this inequality follows from $\rho<\ep$:
$$
\dfrac{\vb(a_s)-\vb(a_{s_0})}{s_0-s}\ge\ep>\rho\ \imp\ \vb(a_s)+s\rho>\vb(a_{s_0})+s_0\rho.
$$
For $s>s_0$ it follows directly from $\vb(b-a)>\dta$:
$$
\vb(a_s)+s\dta>\vb(a_{s_0})+s_0\dta\ \imp\ \vb(a_s)+s\vb(b-a)>\vb(a_{s_0})+s_0\vb(b-a).
$$
This ends the proof of the Claim.
\end{proof}

\end{document}